\newtheoremstyle{custom}
  {3pt}
  {3pt}
  {\slshape}
  {}
  {\bfseries}
  {.}
  { }
   {}
\theoremstyle{custom}
\newtheorem{theorem}{Theorem}[subsection]
\newtheorem{proposition}[theorem]{Proposition}
\newtheorem{proposition/definition}[theorem]{Proposition/Definition}
\newtheorem{lemma}[theorem]{Lemma}
\newtheorem{corollary}[theorem]{Corollary}
\newtheorem{conjecture}[theorem]{Conjecture}
\newtheorem{prop}[theorem]{Proposition}
\theoremstyle{definition}
\newtheorem{definition}[theorem]{Definition}
\newtheorem{example}[theorem]{Example}
\newtheorem{question}[theorem]{Question}
\theoremstyle{remark}
\newtheorem{remark}[theorem]{Remark}
\def\donote#1{\noindent{\bf #1\ }}
\newtheoremstyle{exercise}
  {3pt}
  {6pt}
  {}
  {}
  {\bfseries}
  {:}
  { }
   {}
\theoremstyle{exercise}
\newtheorem{exercise}[theorem]{Exercise}
\newtheoremstyle{exercises}
  {3pt}
  {6pt}
  {}
  {}
  {\bfseries}
  {:}
  {\newline}
   {}
\theoremstyle{exercise}
\newtheorem{exercises}[theorem]{Exercises}
\def\boxit#1{\vbox{\hrule height1pt\hbox{\vrule width1pt\kern3pt
  \vbox{\kern3pt#1\kern3pt}\kern3pt\vrule width1pt}\hrule height1pt}}
\def\BC{\mathbb C}\def\BN{\mathbb N}
\def\BP{\mathbb P}\def\BG{\mathbb G}
\def\pp#1{\mathbb P^{#1}}
\def\pp#1{{\mathbb P}^{#1}}
\def\tdim{{\rm dim}}
\def\hd{,...,}
\def\inv{{}^{-1}}
\def\cO{{\mathcal O}}
\def\CC{\mathbb C}
\def\11{\mathbf 1}
\def\PP{\mathbb P}
\def\s{\sigma}
\def\d{\delta}
\def\ot{{\mathord{ \otimes } }}
\def\op{{\mathord{\,\oplus }\,}}
\def\otc{{\mathord{\otimes\cdots\otimes}\;}}
\def\ra{{\mathord{\;\rightarrow\;}}}
\def\dim{{\rm dim}\;}
\def\La#1{\Lambda^{#1}}
\def\op{\oplus}
\def\ep{\epsilon}
\def\op{\oplus}
\def\s{\sigma}
\def\BP{\mathbb  P}
\def\BC{\mathbb  C}
\def\pp#1{\mathbb  P^{#1}}
\def\tcodim{\text{codim}}
\def\ep{\epsilon}
\def\hd{, \hdots ,}
\def\inv{{}^{-1}}
\def\La#1{\Lambda^{#1}}
\def\pp#1{\mathbb  P^{#1}}
\def\brank{\underline {\mathbf{R}}}
\def\ur{\brank}\def\uR{\brank}
\def\ra{\rightarrow}
\def\tdim{\operatorname{dim}}
\def\tmax{\operatorname{max}}
\def\ctimes{\times \cdots\times}
\def\be{\begin{equation}}
\def\ene{\end{equation}}
\newcommand{\Spec}{\operatorname{Spec}}
\newcommand{\Hom}{\operatorname{Hom}}
\numberwithin{equation}{section}
\newenvironment{red}{\color{red}}{}
\newcommand{\bred}{\begin{red}}
\newcommand{\ered}{\end{red}}
\def\id{\operatorname{id}}
\def\set#1{\left\{#1\right\}}
\def\fromto#1#2{#1, \dotsc, #2}
\def\setfromto#1#2{\set{\fromto{#1}{#2}}}
\newcommand{\ccH}{{\mathcal{H}}}
\newcommand{\ccI}{{\mathcal{I}}}
\newcommand{\ccO}{{\mathcal{O}}}
\newcommand{\ccQ}{{\mathcal{Q}}}
\newcommand{\ccR}{{\mathcal{R}}}
\def\ccF{\mathcal F}
\def\ccH{\mathcal H}
\def\ccI{\mathcal I}
\def\ccO{\mathcal O}
\newcommand{\JaBu}{Jaros\l{}aw Buczy\'n{}ski}
\newcommand{\shortJaBu}{J.~Buczy\'n{}ski}
\renewcommand{\theenumi}{(\roman{enumi})}
\renewcommand{\labelenumi}{\theenumi}
\newcommand{\Sym}{\operatorname{Sym}}
\newcommand{\reduced}[1]{{#1}_{\operatorname{red}}}
\newcommand{\rpp}[1][]{\ensuremath{rpp_{#1}}}
\newcommand{\brpp}[1][]{\ensuremath{b\rpp[#1]}}
\newcommand{\scheme}{R}
\theoremstyle{plain}
\newcounter{manualnumber}
\newtheorem{InternalConjectureWithManualNumber}[manualnumber]{Conjecture}
\def\srdx{\Sigma_r^d(X)}
\theoremstyle{definition}
\newtheorem{notation}[equation]{Notation}
\title[Equations for secant varieties]{Determinantal equations for secant varieties and the
Eisenbud-Koh-Stillman conjecture}
\author[\shortJaBu{}]{\JaBu{}}
\address{\JaBu{} \\
Institute of Mathematics of the
Polish Academy of Sciences\\
ul.~\'Sniadeckich 8\\
P.O.~Box 21\\
00-956~Warszawa, Poland}
\email{jabu@mimuw.edu.pl}
\author[A.~Ginensky]{Adam Ginensky}
\address{Adam Ginensky\\ WH Trading\\
         125~South Wacker Drive Suite~500\\
         Chicago~IL, 60606, USA}
\email{adam.ginensky@yahoo.com}
\author[J.M.~Landsberg]{J.M.~Landsberg}
\address{J.M.~Landsberg \\ Department of Mathematics\\
Texas A\&M University\\
Mailstop~3368\\
College Station, TX~77843-3368, USA }
\email{jml@math.tamu.edu}
\begin{document}
\begin{abstract}
We address
special cases of a  question of Eisenbud on the ideals of secant
varieties of Veronese re-embeddings
of arbitrary varieties. Eisenbud's question generalizes a
conjecture of Eisenbud, Koh and Stillman  (EKS) for curves.
We prove that set-theoretic equations of small secant
   varieties to a  high degree Veronese re-embedding of a smooth variety
   are determined by equations of the ambient Veronese variety and linear equations.
However this is false for singular varieties,
   and we give explicit counter-examples to the EKS conjecture
   for singular curves.
The techniques we use also allow us to  prove a
gap and uniqueness theorem for symmetric tensor rank.
 We put Eisenbud's
  question  in a more general context about  the  behaviour of border rank under specialisation to a linear subspace,
  and provide an overview of conjectures  coming from signal processing and complexity theory in this context.
  \end{abstract}
 \thanks{\shortJaBu{} supported by Maria Sk\l{}odowska-Curie Outgoing Fellowship ``Contact Manifolds'',
         Landsberg supported by NSF grant   DMS-1006353}
\maketitle

\section{Introduction}

The starting point of this paper was the observation that aspects of conjectures and questions originating in
signal processing, computer science, and algebraic geometry all amounted to assertions regarding
linear sections of secant varieties of Segre and Veronese varieties. In this paper we focus on
linear sections of Veronese varieties  to (i) reduce a question of Eisenbud regarding
arbitrary varieties to the case of projective space, (ii) give explicit counter-examples to
a 20 year old conjecture of Eisenbud, Koh and Stillmann (Conjecture~\ref{EKSconj}), and (iii) prove a
uniqueness theorem for tensor decomposition (Theorem~\ref{uniquenesscora}) that should be useful
for applications to signal processing (more precisely, blind source separation, see, e.g.~\cite{ComoJ10}).

We work over the base field of complex numbers $\CC$.

By a  \emph{variety},  we mean an algebraic integral scheme over
 the   complex numbers.
All our varieties will be  projective,     a reader   outside of algebraic geometry
  may simply think  of a variety as the  zero set  of
   a collection of   homogeneous polynomials in a projective  space.
An interested reader may easily generalise some of our results  to    reduced projective  schemes.

\subsection{Secant varieties of Veronese re-embeddings}

Fix a projective variety $X \subset \PP V$, an integer $r \ge 1$ and choose a sufficiently large $d\in \BN$.
The main objective of this paper is to compare the $r$-th secant variety of $d$-th Veronese embeddings of $X$ and $\PP V$,
  denoted, respectively, $\sigma_r(v_d (X))$ and $\sigma_r(v_d (\PP V))$.
Here and throughout the article, for $Y \subset \BP^N$, the \emph{$r$-th secant variety} $\s_r(Y)$ is defined
as
\begin{equation*}
\s_r(Y)     = \overline{\bigcup_{y_1\hd y_r\in Y}  \langle y_1\hd y_r\rangle}\subset \BP^N
\end{equation*}
where $\langle y_1\hd y_r\rangle\subset \BP^N$ denotes the linear span of the points $y_1\hd y_r$ and the overline
denotes Zariski closure.
The \emph{$d$-th Veronese embedding} is denoted $v_d \colon \PP V \to \PP(S^d V)$.

Since  $\sigma_r(v_d (X))$ is contained in $\sigma_r(v_d (\PP V))$
  and also in $\langle v_d (X) \rangle$, the linear span of $v_d (X)$, it is contained
in the intersection:
\begin{equation}\label{equ_subset}
   \sigma_r(v_d (X)) \subset \sigma_r(v_d (\PP V)) \cap \langle v_d (X) \rangle
\end{equation}

Our first result  is that for smooth $X$ and $d$ sufficiently large, the above inclusion is (set-theoretically) an equality.
Let $Got(h_X)$ denote the Gotzmann number of the Hilbert polynomial of $X$, see
   Proposition~\ref{prop_Gotzmann_regularity}.

\begin{theorem}\label{item_case_smooth_r}\label{thm_eis_reduce_to_PN}
  Let
  $X\subset \BP^n$ be a smooth subvariety and let $r\in \BN$.
  For all $d \ge r-1 + Got(h_X)$,
    one has the equality  of sets
  \[
    \sigma_r  (v_d(X)) = \reduced{\bigl(\sigma_r(v_d (\PP^n)) \cap \langle v_d (X) \rangle\bigr)},
  \]
  where $\reduced{(\cdot)}$ denotes the reduced subscheme.
\end{theorem}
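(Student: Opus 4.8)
The inclusion $\subseteq$ is immediate, since $\sigma_r(v_d(X))$ is reduced and already contained in the scheme-theoretic intersection on the right, hence in its reduction; so I would only prove $\supseteq$. Fix $[F]$ in the intersection, viewed as $F\in S^dV$ up to scale, and work with the apolar ideal $F^\perp=\{D\in R : D\circ F=0\}$, where $R:=\Sym V^*$ acts on $\Sym V$ by contraction. I will use the two dictionary entries $[F]\in\langle v_d(X)\rangle\iff(I_X)_d\subseteq(F^\perp)_d$ and $[F]\in\langle v_d(Z)\rangle\iff I_Z\subseteq F^\perp$ for zero-dimensional $Z$. The first thing to establish is a \emph{degree-raising} statement: $(I_X)_d\subseteq(F^\perp)_d$ already forces $I_X\subseteq F^\perp$. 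Indeed, for homogeneous $H\in(I_X)_e$ with $e\le d$ and any $D\in R_{d-e}$ one has $DH\in(I_X)_d\subseteq(F^\perp)_d$, so $D\circ(H\circ F)=(DH)\circ F=0$; as $D$ ranges over all of $R_{d-e}$ and $H\circ F\in S^{d-e}V$, perfectness of the pairing $R_{d-e}\times S^{d-e}V\to\CC$ gives $H\circ F=0$. Since $I_X$ is saturated, Gotzmann's regularity theorem makes it $Got(h_X)$-regular, hence generated in degrees $\le Got(h_X)\le d$, so indeed $I_X\subseteq F^\perp$.

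Next I would exploit $[F]\in\sigma_r(v_d(\BP^n))$. Writing $F=\lim_t F_t$ with $F_t=\sum_{i=1}^r\ell_{i,t}^d$ and letting $Z_t$ be the reduced $r$-point schemes of the $[\ell_{i,t}]$, properness of $\mathrm{Hilb}^r(\BP^n)$ yields a flat limit $Z$ of length $\le r$; because $d\ge r-1$ every such scheme imposes independent conditions in degree $d$, so the spans $\langle v_d(Z_t)\rangle$ have constant dimension and $F\in\langle v_d(Z)\rangle$, i.e. $I_Z\subseteq F^\perp$. Taking $Z$ of minimal length, the apolarity theory of the Veronese secant variety gives moreover that $F^\perp$ agrees with the saturated ideal $I_Z$ in low degrees, namely $(F^\perp)_e=(I_Z)_e$ for all $e\le d-r+1$ (the catalecticant ranks equal $\deg Z$ throughout this range). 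By construction $Z$ is smoothable in $\BP^n$.

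The heart of the argument is then to show $Z\subseteq X$ scheme-theoretically, and here the hypothesis $d\ge r-1+Got(h_X)$ enters exactly as $d-r+1\ge Got(h_X)$. For any generator $H\in(I_X)_e$ with $e\le Got(h_X)$ we have $H\in(F^\perp)_e$ by degree-raising, and since $e\le d-r+1$ the previous step gives $(F^\perp)_e=(I_Z)_e$, so $H\in(I_Z)_e$. As $I_X$ is generated in degrees $\le Got(h_X)$, this yields $I_X\subseteq I_Z$, that is $Z\subseteq X$. Thus $F\in\langle v_d(Z)\rangle$ with $Z$ a length-$\le r$ subscheme of $X$.

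It remains to pass from $F\in\langle v_d(Z)\rangle$, $Z\subseteq X$ of length $\le r$, to $F\in\sigma_r(v_d(X))$, and this smoothing step is where smoothness of $X$ is essential and is the step I expect to be the main obstacle. When $\dim X\le 2$ the Hilbert scheme of points of smooth $X$ is irreducible, so $Z$ is automatically a limit of $r$ distinct points of $X$ and one concludes directly. In general I would argue that $Z$, being smoothable in $\BP^n$ and supported at smooth points of $X$, is smoothable \emph{within} $X$: étale-locally $X$ is affine space of dimension $\dim X$, and the abstract smoothability of the limit scheme transports to a smoothing inside $X$, producing reduced $r$-point configurations $Z_s\subseteq X$ with $\langle v_d(Z_s)\rangle\to\langle v_d(Z)\rangle\ni F$, whence $F\in\sigma_r(v_d(X))$. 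It is precisely this smoothing within $X$ that can fail when $X$ is singular, which is consistent with the promised counterexamples to the EKS conjecture for singular curves.
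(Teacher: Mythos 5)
The first two thirds of your argument are essentially sound, and in fact run parallel to the paper's machinery in apolarity language: your degree--raising step plus Gotzmann regularity correctly gives $I_X\subseteq F^\perp$, and your unproved claim that $(F^\perp)_e=(I_Z)_e$ for $e\le d-r+1$ when $Z$ is a \emph{minimal} apolar scheme is true and can be established by a dimension count on ideals of zero-dimensional schemes (it is comparable in content to Lemma~\ref{lemma_p_is_in_span_of_intersection} and Corollary~\ref{cor_R_subset_Q}, so citing ``apolarity theory'' here is a forgivable, repairable omission). Granting it, your conclusion $Z\subseteq X$ is correct. Note that at this point your proof has become the proof of Proposition~\ref{prop_eis_reduction_to_PV_most_general}: reduce to a minimal (hence, by Lemma~\ref{lemma_equivalence_of_Gorenstein}, Gorenstein) scheme inside $X$, then try to smooth it.

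The genuine gap is in your final step, and it is fatal as written: the scheme you need to smooth is the \emph{minimal} apolar scheme $Z$, but the scheme that is ``smoothable by construction'' is the flat limit of the $r$-point configurations; these are different schemes in general, and you silently identify them. You cannot use the flat limit in place of $Z$, because the flat limit need not be contained in $X$ (take $X$ a single point, $F=x^d$, approximated by $\frac{1}{2}\bigl((x+ty)^d+(x-ty)^d\bigr)$: the flat limit is a double point sticking out of $X$); and once you pass to the minimal $Z$, all you retain (via Corollary~\ref{cor_R_subset_Q}) is that $Z$ is a \emph{subscheme} of the smoothable flat limit, which does not yield smoothability of $Z$ -- there is no argument that subschemes of smoothable zero-dimensional schemes are smoothable. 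Minimality only gives that $Z$ is Gorenstein, and non-smoothable Gorenstein schemes exist (degree $14$, embedding dimension $6$), so for $r\ge 15$ and $\dim X\ge 6$ your argument needs exactly condition~\ref{item_condition_on_dim_Hilb_Q} of Proposition~\ref{prop_eis_reduction_to_PV_most_general}, which the paper points out \emph{fails} for $\BP^N$, $N\ge 6$, $r\ge 15$, even though the theorem remains true there. The paper escapes this trap by never shrinking to a minimal scheme: it keeps the full smoothable degree-$r$ scheme $R$, uses Lemma~\ref{lemma_p_is_in_span_of_intersection} to get $p\in\langle v_d(R\cap X)\rangle$, and then \emph{re-fattens} $R\cap X$ to a degree-$r$ scheme $Q\subseteq X$ that is smoothable in $X$ by construction, namely the flat limit of the images of the smoothing points of $R$ under a local analytic fibration $\pi\colon U_1\to U_2\subset X$ (this projection is where smoothness of $X$ enters). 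Your \'etale-local transport of smoothability (the paper's Lemma~\ref{lemma_when_smoothability_holds_case_smooth}) is fine as far as it goes, but it has nothing to act on, since smoothability of the minimal $Z$ in $\BP^n$ is precisely what is not known; the projection/re-fattening maneuver is the missing idea.
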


We expect that the equality of sets in the theorem is really an equality of ideals,
   that is the defining ideal of $\sigma_r (v_d(X))$ is equal to the sum of ideals of
   $\sigma_r  (v_d(X))$ and $\langle v_d (X) \rangle$.
 Evidence for the equality of ideals is given by work in progress by Weronika Buczy\'nska, Mateusz Micha\l{}ek and the first named author.

 Theorem \ref{item_case_smooth_r}    is motivated by a question of Eisenbud, which we review in Section~\ref{intro}, and
questions arising in applications comparing, for a projective variety $Z$,
 the  scheme   $\s_r(Z)\cap L$ with
$\s_r(Z\cap L)$ where $L$ is a linear space, see \S\ref{appsintro} below.
We prove Theorem~\ref{thm_eis_reduce_to_PN} in Section~\ref{sect_reduction_to_PN}.
The proof is based on the Gotzmann regularity property --- see Proposition~\ref{prop_Gotzmann_regularity}.
The main new ingredient in the proof is the following lemma.

\begin{lemma}[Main Lemma]\label{lemma_p_is_in_span_of_intersection}
  Let $X\subset \BP^n$ be a subscheme.
  Suppose $d\ge r-1 + Got(h_X)$ and
$R\subset \BP^n$ is a $0$-dimensional scheme of degree at most $r$.
Then $\langle v_d(R) \rangle \cap \langle  v_d(X) \rangle = \langle v_d (R \cap X) \rangle$.
\end{lemma}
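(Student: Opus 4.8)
The plan is to translate the statement about linear spans into a cohomological vanishing and then to bound the Castelnuovo--Mumford regularity of the scheme-theoretic union $R\cup X$. Writing $S^dV^{*}$ for the space of linear forms on $\BP(S^dV)$, the span $\langle v_d(Z)\rangle$ of the image of a subscheme $Z\subset\BP^n$ is the zero locus of the subspace $H^0(\BP^n,\mathcal{I}_Z(d))\subseteq S^dV^{*}$, so as a projective subspace it is the annihilator of $H^0(\mathcal{I}_Z(d))$ inside $S^dV$. Under this dictionary an intersection of spans corresponds to the sum of the associated subspaces of forms, while the scheme-theoretic intersection satisfies $\mathcal{I}_{R\cap X}=\mathcal{I}_R+\mathcal{I}_X$. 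Hence the asserted equality of spans is equivalent to the equality $H^0(\mathcal{I}_R(d))+H^0(\mathcal{I}_X(d))=H^0\bigl((\mathcal{I}_R+\mathcal{I}_X)(d)\bigr)$ of subspaces of $S^dV^{*}$. The inclusion $\subseteq$ is automatic (the span of an intersection is contained in the intersection of the spans), so the entire content is the surjectivity of the natural map $H^0(\mathcal{I}_R(d))\oplus H^0(\mathcal{I}_X(d))\to H^0\bigl((\mathcal{I}_R+\mathcal{I}_X)(d)\bigr)$.

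To obtain this surjectivity I would use the sheaf sequence $0\to\mathcal{I}_{R\cup X}\to\mathcal{I}_R\oplus\mathcal{I}_X\to\mathcal{I}_R+\mathcal{I}_X\to 0$, where $\mathcal{I}_{R\cup X}=\mathcal{I}_R\cap\mathcal{I}_X$. Twisting by $\mathcal{O}(d)$ and passing to the long exact sequence, the displayed map is surjective as soon as $H^1(\BP^n,\mathcal{I}_{R\cup X}(d))=0$. Everything thus reduces to proving this vanishing for $d\ge r-1+g$, where I abbreviate $g:=Got(h_X)$. By Proposition~\ref{prop_Gotzmann_regularity} the sheaf $\mathcal{I}_X$ is $g$-regular, while a zero-dimensional scheme of degree at most $r$ is $r$-regular, so the threshold $r-1+g$ is precisely what one expects from combining these two regularities.

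The heart of the argument is the claim that $\mathcal{I}_{R\cup X}$ is $(g+r)$-regular, which yields $H^1(\mathcal{I}_{R\cup X}(d))=0$ for all $d\ge g+r-1$. Since $R$ is zero-dimensional, $\mathcal{O}_R$ has finite length $s\le r$ and admits a composition series, giving a chain of subschemes $\varnothing=R_0\subset R_1\subset\cdots\subset R_s=R$ with $\deg R_i=i$. Setting $Y_i:=X\cup R_i$, the inclusion $\mathcal{I}_{R_i}\subset\mathcal{I}_{R_{i-1}}$ shows that $\mathcal{I}_{Y_{i-1}}/\mathcal{I}_{Y_i}$ injects into the length-one sheaf $\mathcal{I}_{R_{i-1}}/\mathcal{I}_{R_i}$, producing a short exact sequence $0\to\mathcal{I}_{Y_i}\to\mathcal{I}_{Y_{i-1}}\to\mathcal{C}_i\to 0$ with $\mathcal{C}_i$ supported at a point and of length at most $1$. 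I would then prove the step lemma: if $\mathcal{I}_Y$ is $m$-regular and $0\to\mathcal{I}_{Y'}\to\mathcal{I}_Y\to\mathcal{C}\to 0$ is such a length-$\le 1$ skyscraper quotient, then $\mathcal{I}_{Y'}$ is $(m+1)$-regular. Induction on $i$, starting from the $g$-regularity of $\mathcal{I}_{Y_0}=\mathcal{I}_X$, then shows $\mathcal{I}_{R\cup X}=\mathcal{I}_{Y_s}$ is $(g+s)$-regular, hence $(g+r)$-regular.

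The main obstacle, and the only step going beyond formal manipulation, is the step lemma. For the cohomological degrees $i\ge 2$ it is immediate, since a skyscraper has no higher cohomology and regularity is preserved under increasing the twist. The delicate term is $H^1(\mathcal{I}_{Y'}(m))$, whose vanishing amounts to surjectivity of $H^0(\mathcal{I}_Y(m))\to H^0(\mathcal{C}(m))\cong\CC$. Here I would invoke that an $m$-regular ideal sheaf has $\mathcal{I}_Y(m)$ globally generated: its global sections generate the stalk at the support point of $\mathcal{C}$ as an $\mathcal{O}$-module, and therefore surject onto the length-one quotient $\mathcal{C}(m)$. This is exactly the input that makes each single-point step cost one unit of regularity, and iterating at most $r$ times produces the bound $g+r$ and hence the threshold $d\ge r-1+g$. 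I expect the bookkeeping of the composition series, and the identification of the scheme-theoretic intersection with the ideal sum $\mathcal{I}_R+\mathcal{I}_X$, to be the only remaining points requiring routine care.
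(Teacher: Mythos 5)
Your proposal is correct, but it takes a genuinely different route from the paper. The paper argues by a pure dimension count: in the stated range of $d$ the schemes $X$, $R$, $X\cap R$ and $X\cup R$ are all $d$-regular (Propositions~\ref{prop_delta_regular} and~\ref{prop_Gotzmann_regularity} together with Lemma~\ref{lemma_additivity_of_Hilbert_polynomials}), so each span $\langle v_d(\cdot)\rangle$ has dimension equal to the value of the corresponding Hilbert polynomial minus one (Lemma~\ref{lemma_dimension_of_linear_span}); combining the additivity $h_{X\cap R}=h_X+h_R-h_{X\cup R}$, which comes from the structure-sheaf sequence $0\to\mathcal{O}_{X\cup R}\to\mathcal{O}_X\oplus\mathcal{O}_R\to\mathcal{O}_{X\cap R}\to 0$, with the identity $\dim(A\cap B)=\dim A+\dim B-\dim\langle A\cup B\rangle$ for linear subspaces forces the trivial inclusion of spans to be an equality. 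You instead dualize and reduce to the surjectivity of $H^0(\mathcal{I}_R(d))\oplus H^0(\mathcal{I}_X(d))\to H^0(\mathcal{I}_{R\cap X}(d))$, hence to the vanishing $H^1(\mathcal{I}_{R\cup X}(d))=0$; in effect you run Mayer--Vietoris on ideal sheaves where the paper runs it on structure sheaves. The real divergence is in how the union $X\cup R$ is controlled. The paper gets $(Got(h_X)+t)$-regularity of $\mathcal{I}_{X\cup R}$, with $t=\deg R-\deg(X\cap R)\le r$, by applying Gotzmann's theorem to the polynomial $h_{X\cup R}=h_X+t$ (adding a constant to a Hilbert polynomial raises its Gotzmann number by at most that constant). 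You instead rebuild the regularity bound by hand, adding one point of $R$ at a time through a composition series and paying one unit of regularity per point, the cost of each step being controlled by Mumford's theorem that an $m$-regular ideal sheaf is globally generated in twist $m$; your step lemma and its verification are sound, including the degenerate case where the skyscraper quotient vanishes. Each approach buys something: the paper's is shorter given that Gotzmann's theorem and Hilbert-polynomial additivity are already in its toolkit, and it avoids invoking global generation; yours applies Gotzmann only to $X$ itself and in fact proves the sharper statement that the conclusion holds for all $d\ge\delta+r-1$ whenever $\mathcal{I}_X$ is $\delta$-regular, with $Got(h_X)$ entering merely as a uniform choice of $\delta$ depending on the Hilbert polynomial alone.
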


The application of the lemma to the proof of Theorem~\ref{thm_eis_reduce_to_PN} is as follows:
If $p \in \bigl(\sigma_r(v_d (\PP^n)) \cap \langle v_d (X) \rangle\bigr)$
  is on a secant $\PP^{r-1}$ spanned by $r$ distinct points
  in  $v_d (\PP^n)$,
   let    $R$ denote the zero dimensional scheme consisting of the $r$ points,
  considered as points in $\PP^n$,
  so that $p \in \langle v_d(R) \rangle$.
Then by  Lemma \ref{lemma_p_is_in_span_of_intersection},  $p \in \langle v_d (R \cap X) \rangle$, that is $p$ is on a secant $\PP^{t-1}$ to $v_d (X)$,
  where $t = \# (R \cap X) \le r$.
More  work   must be done to deal with the case when $p$ is not on an honest secant $\PP^{r-1}$,
  but the main idea is the same.
The issue of the \emph{smoothability} of zero-dimensional schemes comes into the picture,
  and we show that the smoothness hypothesis in Theorem \ref{item_case_smooth_r} is needed:

\begin{theorem}\label{eisfalsesing}
  For any $q$ and $r \ge 2$, there exist irreducible, singular varieties $X\subset \BP V$ such that
    $\s_r(v_d(X)) \ne \reduced{\bigl(\sigma_r(v_d (\PP^n)) \cap \langle v_d (X) \rangle\bigr)}$ as sets,
    and moreover $\s_r(v_d(X))$ is not cut out set-theoretically by equations of degree at most $q$ for all $d \ge 2r-1$.
  Explicit examples of curves with this property  are given in
    \S\ref{section_counterexamplesc} and \S\ref{section_counterexamples_complete_intersection}.
\end{theorem}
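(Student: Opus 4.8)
The plan is to localize the failure of the equality at a single singular point of $X$ and to exploit the gap between being \emph{smoothable in $\PP^n$} and being \emph{smoothable inside $X$}. Concretely, I would construct an irreducible singular curve $X\subset \PP^n$ together with a $0$-dimensional subscheme $R\subset X$ of degree $r$, supported at a singular point $x\in X$, having three properties: (a) $R$ is smoothable in $\PP^n$, so that for $d\ge 2r-1$ the entire $(r-1)$-plane $\langle v_d(R)\rangle$ lies in $\sigma_r(v_d(\PP^n))$, being a limit of honest secant $(r-1)$-planes; (b) since $R\subseteq X$ one has $\langle v_d(R)\rangle \subseteq \langle v_d(X)\rangle$ for free; and (c) $R$ is \emph{not} a flat limit of $r$ distinct reduced points of $X$. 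Choosing a general $p\in\langle v_d(R)\rangle$, properties (a) and (b) place $p$ in $\reduced{\bigl(\sigma_r(v_d(\PP^n))\cap \langle v_d(X)\rangle\bigr)}$, and the whole point of the argument is to turn (c) into the assertion $p\notin \sigma_r(v_d(X))$.

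To make that implication rigorous I would invoke uniqueness of the minimal apolar scheme. For $d\ge 2r-1$, the Main Lemma (Lemma~\ref{lemma_p_is_in_span_of_intersection}) together with the uniqueness theorem (Theorem~\ref{uniquenesscora}) pin down $R$ as the unique $0$-dimensional scheme of degree $\le r$ with $p\in\langle v_d(R)\rangle$. Since $\sigma_r(v_d(X))$ equals the union of the spans $\langle v_d(Z)\rangle$ over smoothable length-$\le r$ schemes $Z\subseteq X$, membership $p\in\sigma_r(v_d(X))$ would force such a $Z$ to be apolar to $p$, hence to coincide with $R$, contradicting (c). The cleanest concrete realization is the planar cusp: for $r=2$ take $X=\{y^2=x^{3}\}\subset\PP^2$ and let $R$ be the length-$2$ scheme in the $y$-direction at the cusp. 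One checks $R\subseteq X$ scheme-theoretically, since $y^2-x^3\in(x,y^2)$, while the tangent star of $X$ at the cusp is only the reduced tangent cone (the $x$-axis), so $R$ is not a limit of secants. For general $r\ge 2$ one replaces the cusp by a more degenerate unibranch singularity and takes $R$ of length $r$ transverse to the tangent cone; the complete-intersection examples make all of this explicit.

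For the quantitative statement — that $\sigma_r(v_d(X))$ is not cut out set-theoretically by equations of degree $\le q$ — I would engineer the singularity so that the plane $\ell=\langle v_d(R)\rangle$ is tangent to high order to $\sigma_r(v_d(X))$ at $v_d(x)$. Passing to a family $X=X_q$ of unibranch singularities of increasing order (for $r=2$, the cusps $y^2=x^{2k+1}$; in general, monomial complete intersections with one growing exponent), the scheme-theoretic intersection $\ell\cap \sigma_r(v_d(X_q))$ is, for every $d\ge 2r-1$, a finite scheme of length $L>q$ concentrated at $v_d(x)$, while the generic point $p\in\ell$ escapes $\sigma_r(v_d(X_q))$ by the previous paragraph. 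Then for any homogeneous form $F$ of degree $\le q$ vanishing on $\sigma_r(v_d(X_q))$, the restriction $F|_{\ell'}$ to a general line $\ell'\subset\ell$ through $p$ and $v_d(x)$ has degree $\le q$ yet vanishes on a length-$>q$ subscheme of $\ell'\cong\PP^1$, hence $F|_{\ell'}\equiv 0$ and $F(p)=0$. Thus every degree-$\le q$ equation of $\sigma_r(v_d(X_q))$ vanishes at the excluded point $p$, so no system of such equations can cut out $\sigma_r(v_d(X_q))$ set-theoretically.

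I expect the main obstacles to be the two "escape" claims rather than the construction. First, establishing (c) together with the uniqueness input — i.e.\ proving that $p$ genuinely leaves $\sigma_r(v_d(X))$ and is not reached by some other degenerating family of $r$ points elsewhere on $X$ — requires controlling both the tangent star and the apolar schemes, which is exactly the role of the Main Lemma and Theorem~\ref{uniquenesscora}. Second, computing the contact length $L$ of $\ell$ with $\sigma_r(v_d(X_q))$ and proving $L>q$ uniformly for all $d\ge 2r-1$ is the technical heart of the degree statement; this is where the explicit monomial/complete-intersection structure of the counterexample curves is indispensable, since it renders the local equations of both $X_q$ and its secant variety computable.
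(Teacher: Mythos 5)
Your high-level strategy for the first assertion --- produce a scheme $R\subset X$ that is smoothable in $\PP^n$ but not in $X$, take a general $p\in\langle v_d(R)\rangle$, and use Corollary~\ref{cor_R_subset_Q} to conclude $p\notin\sigma_r(v_d(X))$ --- is exactly the mechanism of the paper (it is the failure of condition~A of Proposition~\ref{prop_eis_reduction_to_PV_most_general}, made quantitative in Lemmas~\ref{lemma_T_is_contained_in_varsigma_case_big_r} and \ref{lemma_sigma_r_vdX_has_only_tangent_star_case_big_r}). The gap is in your concrete realization: the planar cusp, and more generally any hypersurface singularity, \emph{cannot} work. Your claim that the tangent star of $\{y^2=x^3\}$ at the cusp is only the $x$-axis is false. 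Parametrize the cusp by $(t^2,t^3)$ and take the secant through $(s^2,s^3)$ and $(t^2,t^3)$ with $s=-t+\lambda t^2$: its direction $\left[\,s+t \,:\, s^2+st+t^2\,\right]$ converges to $[\lambda:1]$, and $s=t+t^3$ gives $[1:0]$, so $T^\star_x X=\PP\hat T_xX$ is the whole plane. Correspondingly, your scheme $R=V(x,y^2)$ \emph{is} smoothable in $X$: it is the flat limit of the pairs of points $\{(t^2,t^3),(t^2,-t^3)\}\subset X$, whose ideals $(x-t^2,\,y^2-t^6)$ degenerate to $(x,y^2)$. So property (c) fails, and the point $p$ never escapes $\sigma_2(v_d(X))$. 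This is not an accident of the cusp: the paper's Lemma~\ref{lemma_when_smoothability_holds} asserts $T^\star_xX=\PP\hat T_xX$ for all hypersurface singularities, and Theorem~\ref{item_case_hypersurface_sings_r_le_2} then proves that for $r=2$ and $X$ with at worst hypersurface singularities the equality $\sigma_2(v_d(X))=\Sigma_2^d(X)$ \emph{always} holds for large $d$. Hence your entire proposed family $y^2=x^{2k+1}$ (and any plane-curve singularity, however degenerate) is ruled out by the paper's own positive results.

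What the counterexamples actually require is a singular point of embedding dimension at least $3$ at which the tangent star is a \emph{proper} subvariety of its linear span, and, for the degree statement, one whose ideal has no elements in degrees $\le q$ defining any of its components. The paper achieves this with multi-branch singularities: $k$ concurrent lines in $\PP^3$ in general position (made irreducible by projecting a rational normal curve), whose tangent star is the union of $\binom{k}{2}$ planes --- a hypersurface of degree $\binom{k}{2}$ spanning $\PP^3$ --- or the complete intersection space curves of \S\ref{section_counterexamples_complete_intersection}, whose tangent star is a union of $6$ planes. Your final "contact length" argument for the degree bound is in spirit close to the paper's (the paper restricts putative low-degree equations to the linear space $W=\langle \PP\hat T_x v_d(X)\cup\{y_1,\dots,y_{r-2}\}\rangle$, where $\Sigma_r^d(X)\cap W=W$ but $\sigma_r(v_d(X))\cap W$ is a cone over the tangent star, so the equations would have to cut out that cone in degree $\le q$), but as you set it up --- a length-$>q$ intersection scheme concentrated at $v_d(x)$ for a unibranch singularity --- it rests on the same false picture of the tangent star and on an uncomputed contact order; with the multi-branch examples the needed statement is instead immediate, since a general line in $W$ meets the union of $\binom{k}{2}$ planes in $\binom{k}{2}>q$ distinct points.
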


A more precise result is stated in Theorem \ref{eis_does_not_hold_for_singular_case_big_r} below.
There we explain what type of singularities are needed to obtain the inequality
$\s_r(v_d(X)) \ne \sigma_r(v_d (\PP^n)) \cap \langle v_d (X) \rangle$,
and what type of singularities are needed to have $\s_r(v_d(X))$ defined by equations of high degrees.

If   $X$ has at worst hypersurface singularities
and $r\leq 2$, we show that the conclusion of Theorem \ref{item_case_smooth_r} still holds, see
Theorem \ref{item_case_hypersurface_sings_r_le_2}.
We also show (Theorem \ref{item_case_union_of_components}) that  it holds \lq\lq locally\rq\rq\ for arbitrary $X$, in the
sense that $\sigma_r ( v_d(X))$ is an irreducible component of $\sigma_r(v_d (\PP^n)) \cap \langle v_d (X) \rangle$.
These results  generalize  essentially verbatim to reducible $X$.

\subsection{Background and history}\label{intro}
D.~Mumford   \cite[p.~32, Thm~1]{MR0282975}   observed
  that if $X\subset \BP V$ is
 a projective   variety, and one takes a sufficiently large Veronese re-embedding
 of $X$, $v_d(X)\subset \BP S^dV$, then
$v_d(X)$ will be cut out set-theoretically by quadrics (in fact quadrics of rank at most four),
and moreover, that if $X$ is smooth,
the ideal of $v_d(X)$ will be generated in
 degree two. P.~Griffiths  \cite[Thm~p.~271]{MR720290}  remarked further  that with $d$ as above, and $X$ smooth,
the embedding  $v_{2d}(X)$  will be  cut out set-theoretically  by the two by two minors of a matrix of linear forms.
These results were generalized to ideal-theoretic equations of minors for arbitrary  schemes
by J.~Sidman and G.~Smith  \cite[Thm~1.1]{sidman_smith_determinantial}.

More generally, let $L_1,L_2$ be ample line bundles on  an abstract  variety $X$.
The map
$$
 \phi_{L_1^d\ot L_2^e} :X  \ra \BP\bigl(H^0(X,L_1^d\ot L_2^e)^*\bigr)
$$
will be an embedding for $d,e$ sufficiently large.
Write
$V_1=H^0(X,L_1^d)^*$, $V_2=H^0(X,L_2^e)^*$, so
there is  a map
$V_1^*\ot V_2^*\ra H^0(X,L_1^d\ot L_2^e)$ given on decomposable elements by
multiplication of sections.
Let $W^*$ denote the image of the map, so there is  an inclusion
$W\subset V_1\ot V_2$, and $\phi_{L_1^d\ot L_2^e}(X)\subset \BP W$.
Under this inclusion,  the image of $X$ lies in the Segre variety $Seg(\BP V_1\times \BP V_2)$ of
rank one elements  intersected with $\BP W$ (see,  e.g.~\cite[pp~513--514]{MR944326}, \cite[\S1.2]{LOver}).
 The ideal of the Segre is generated in degree two by the two by two minors,
i.e.,  $\La 2 V_1^*\ot \La 2 V_2^*$,
so these minors provide equations for $X\subset \BP W$.

In the above setting,  $\s_r(\phi_{L_1^d\ot L_2^e}(X))\subset \s_r(Seg(\BP V_1\times \BP V_2))$, and thus
equations for the latter give equations for the former. With this in mind,
define
\be\label{catideal}
I(Rank_r(L_1^d,L_2^e))\subset \Sym (W^*)
\ene
to be
the ideal generated by the image of the $r+1$ by $r+1$ minors.
Note that in general $I(Rank_r(L_1^d,L_2^e))$  need  not   be radical,  or even saturated.

The following conjecture is due to D.~Eisenbud, J.~Koh, and M.~Stillman   \cite[p.~518, Equation~(*)]{MR944326}.

\begin{conjecture}[EKS conjecture  (1988)] \label{EKSconj}
Let $C$ be a reduced, irreducible curve, let
$L_1,L_2$ be ample line bundles on $C$. Then there exists a \lq\lq good constant\rq\rq\  $r_0$ depending only
on the genus of $C$ and the $L_i$ such that there is an equality of ideals
$$
I(\s_r(\phi_{L_1^d\ot L_2^e}(C))) = I(Rank_r(L^d_1,L_2^e))
$$
for all $r\leq r_0(d, e)$. Moreover $r_0$ tends to infinity as $d,e \ra\infty$.
\end{conjecture}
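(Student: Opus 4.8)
The plan is to reduce the ideal equality to a two--factor (Segre) analogue of Theorem~\ref{thm_eis_reduce_to_PN} and then upgrade the resulting set--theoretic statement to an ideal--theoretic one. One inclusion is free: since $\phi_{L_1^d\otimes L_2^e}(C)\subset Seg(\BP V_1\times\BP V_2)\cap\BP W$, the restricted $(r+1)\times(r+1)$ minors vanish on every secant $\langle y_1,\dots,y_r\rangle$, so $I(Rank_r(L_1^d,L_2^e))\subseteq I(\sigma_r(\phi_{L_1^d\otimes L_2^e}(C)))$ for all $r$. The entire content is the reverse inclusion, which I would split into: (a) the set--theoretic equality $\sigma_r(\phi(C))=\sigma_r(Seg(\BP V_1\times\BP V_2))\cap\BP W$, and (b) the assertion that this intersection is reduced with homogeneous ideal generated by the minors. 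Together (a) and (b) give $I(\sigma_r(\phi(C)))=I(Rank_r)$, which is exactly the conjectured equality (recall the paper notes $I(Rank_r)$ need not even be radical, so this radicality is itself part of what must be proved for small $r$).

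For step (a) I would mimic the Veronese argument. The ambient secant variety $\sigma_r(Seg(\BP V_1\times\BP V_2))$ is the classical determinantal variety of tensors in $V_1\otimes V_2$ of rank at most $r$, cut out ideal--theoretically by the $(r+1)$--minors; hence a point of $\sigma_r(Seg)\cap\BP W$ is a rank--$\le r$ tensor lying in $W$, and the task is to show its rank--one summands may be chosen on $\phi(C)$. This is precisely the role played by the Main Lemma (Lemma~\ref{lemma_p_is_in_span_of_intersection}) in the Veronese case: a Gotzmann--type regularity bound forces the length--$\le r$ scheme supporting such a tensor to be cut out already on $C$, hence supported on $\phi(C)$. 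I expect the required threshold to be a function $r_0(d,e)$ governed by the Gotzmann number of the relevant Hilbert polynomial, growing without bound as $d,e\to\infty$ because both $\dim W$ and the separation of points of $\phi(C)$ grow; this is what should produce the asserted ``$r_0\to\infty$''.

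Step (b) is the technical heart. Here I would use that $\sigma_r(Seg(\BP V_1\times\BP V_2))$ is arithmetically Cohen--Macaulay with an explicit Eagon--Northcott / Lascoux resolution, so its saturated ideal is generated by the minors and its Castelnuovo--Mumford regularity is controlled. The point is to show that, for $d,e$ large relative to $r$, the linear section by $\BP W$ is transverse enough that (i) the intersection remains reduced and (ii) generation in degree $r+1$ is inherited by the restriction, so that the restricted minors generate the \emph{full} saturated ideal rather than merely cut out the variety. Standard regularity and flatness bounds for sufficiently general linear sections should deliver this once the set--theoretic equality of (a) is in hand.

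The main obstacle --- and, as it turns out, a fatal one in the stated generality --- is the contribution of the singular locus of $C$. Near a singular point the local geometry of $\sigma_r(\phi(C))$ need not match $\sigma_r(Seg)\cap\BP W$: extra components, or embedded and non--reduced structure, can appear, destroying both the reducedness of (ii) and the generation by minors. Indeed, by Theorem~\ref{eisfalsesing} (which I may assume) there are reduced irreducible singular curves for which $\sigma_r(\phi(C))$ is not even cut out set--theoretically by equations of bounded degree, so no fixed--degree minors can suffice and the conjecture as stated must fail. Thus the honest conclusion of this plan is that the strategy goes through when $C$ is smooth --- where the smooth--case techniques underlying Theorem~\ref{thm_eis_reduce_to_PN} apply --- and, with more work, for very mild singularities (such as the $r\le 2$, hypersurface--singularity situation), but that controlling the local contribution of the singular points is exactly where a general proof breaks down.
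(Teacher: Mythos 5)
You are attempting to prove a statement that this paper in fact \emph{disproves}: Conjecture~\ref{EKSconj} is recorded here as a conjecture of Eisenbud--Koh--Stillman, and the paper's own contribution concerning it is a refutation (Theorem~\ref{eisfalsesing}, realized by the explicit constructions of \S\ref{section_counterexamplesc} and \S\ref{section_counterexamples_complete_intersection}); the positive results for smooth curves are not proved in this paper but cited from \cite{MR1272376} (set-theoretic) and \cite{Ginensky} (scheme-theoretic). Your bottom line --- that the conjecture as stated must fail for singular curves while the strategy survives for smooth ones --- therefore agrees with the paper's conclusion. However, as a piece of mathematics your argument for the failure is circular: you invoke Theorem~\ref{eisfalsesing}, which is precisely the assertion whose proof constitutes the paper's treatment of this conjecture. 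What the paper actually supplies, and your proposal does not, is the mechanism of failure: at a singular point $x$ the tangent star $T^{\star}_x X$ can fail to be a linear space and can fail to admit any equations of low degree (for instance, projecting a rational normal curve so that $k$ general points are glued produces an irreducible curve whose singularity looks like $k$ concurrent general lines, with tangent star a union of $\binom{k}{2}$ planes, i.e.\ a hypersurface of degree $\binom{k}{2}$; complete-intersection examples are given as well). Theorem~\ref{eis_does_not_hold_for_singular_case_big_r}, proved by slicing both $\sigma_r(v_d(X))$ and $\srdx$ with the linear space spanned by $\BP \hat T_x v_d(X)$ and $r-2$ further points, then shows that $\sigma_r(v_d(X))$ cannot be cut out set-theoretically in degree $\le q$. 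Since $I(Rank_r(L_1^d,L_2^e))$ is generated in degree $r+1$, already the case $r=2$ with $\binom{k}{2}>3$ destroys the conjectured ideal equality for \emph{all} large $d,e$, and hence for $r\le r_0(d,e)$ once $r_0\ge 2$.

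Two further cautions about your positive steps. First, step (b) appeals to transversality and ``standard regularity and flatness bounds for sufficiently general linear sections,'' but $\BP W$ is not a general linear section --- it is the specific section determined by multiplication of sections of $L_1^d$ and $L_2^e$ --- so genericity arguments are unavailable; this is exactly why even the smooth-curve case required the dedicated arguments of \cite{Ginensky}. Second, the Veronese technology you propose to imitate (Lemma~\ref{lemma_p_is_in_span_of_intersection} and Theorem~\ref{thm_eis_reduce_to_PN}) only ever yields set-theoretic equality of reduced structures, so even where it applies it could not by itself deliver the equality of ideals that the EKS conjecture asserts.
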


Conjecture \ref{EKSconj} was proved set-theoretically  in the case $C$ is a smooth curve
in  \cite{MR1272376},  and scheme-theoretically for smooth curves
  in  \cite{Ginensky}.  Moreover,  sharp bounds on $d,e$ were given in terms of the genus
of the curve.
Theorem~\ref{eisfalsesing}
  provides counter-examples to Conjecture~\ref{EKSconj} for singular varieties.

To relate Conjecture  \ref{EKSconj} to the  first    paragraph of this subsection,
take $C\subset \PP V$, $L_1=L_2=\cO_{C}(1)$ and $r=1$.
More generally, if $r\ge 1$,
then $W \subset S^{d+e}V\subset S^dV\ot S^e V$ and the corresponding equations are the
so  called   {\it symmetric flattenings} or {\it catalecticant minors}
studied first by Sylvester, see  \cite{MR1735271} for a history.

Conjecture \ref{EKSconj} was generalized to higher dimensions by D.~Eisenbud in the form of a question:

\begin{question}[Eisenbud's question (unpublished)] \label{question_of_Eisenbud}
Let $X$ be a projective variety, let $L_1,L_2$ be  ample  line bundles on $X$.
Fix $r$. Do there exist infinitely many sufficiently large $d,e$ such that
    $I(\s_r(\phi_{L_1^d\ot L_2^e}(X)))= Rank_r(L_1^d,L_2^e) $?
\end{question}
In light of Theorem~\ref{eisfalsesing}, one should add the hypothesis that $X$ is smooth to
this question. See also  \cite[Conj.~1.2]{sidman_smith_determinantial}  for a similar conjecture.
The question was discussed by D.~Eisenbud many times in conversation and was communicated to us in an informal e-mail.

A result announced by A.~Iarrobino and V.~Kanev in \cite[Cor.~6.36]{MR1735271} provides a negative
   answer to the question of Eisenbud already in the case $X=\pp n$ and  $L_1=L_2=\cO(1)$ for $n\geq 4$
   and $r$ sufficiently large.
This is a consequence of \cite[Thm~6.34]{MR1735271},
   which is quoted from a paper by Y.~H.~Cho and A.~Iarrobino \cite{iarrobino_cho} that was  posted on the arXiv  in 2011.
Additional results in this direction (both affirmative and negative) appear in \cite[Thms~1.1, 1.4]{BuBu}.

One special case where Eisenbud's question has a positive answer comes from work of Geramita and Raicu.
In the case where $L_1=L_2=\cO(1)$ and $r=2$,
   it was known that whenever $d \ge 2$ and $e \ge 2$, the equations of $Rank_2(L_1^d,L_2^e)$ cut out
$\s_2(v_{d+e}(\pp n))$  scheme-theoretically.
It was further known that these equations, plus the equations of $Rank_2(L_1^{1},L_2^{d+e-1})$,
   generated the entire ideal of this secant variety.
A.~Geramita conjectured in \cite[p. 155]{catalecticant}
   that the second collection of equations were superfluous,
    and this was proven by Raicu in \cite[Thm~5.1]{raicugera}.

In  \cite[Conj.~1.2]{sidman_smith_determinantial},  a slightly different form of the question is stated as a conjecture that
involves only one line bundle which is required to be sufficiently ample.

The Eisenbud-Koh-Stillman conjecture and the question of Eisenbud were stated in the ideal-theoretic setting, i.e.,
  the two  schemes  in question had the same ideals.
One could attempt to address the weaker scheme, or set-theoretic problems.
Another weaker form of the problem would be simply to determine,
   if the ideal of $\s_r(\phi_{L_1^d\ot L_2^e}(X))$ is generated in degree $r+1$, or
   even weaker, that   $\s_r(\phi_{L_1^d\ot L_2^e}(X))$ is cut out set-theoretically by equations of  degree $r+1$.
It is this last statement, in the special case where one begins with $X\subset \BP V$ and
   only considers Veronese re-embeddings, i.e., $L_1=L_2=\cO_{\BP V}(1)|_X$,
   that came to our attention because of its connections
   with conjectures originating in signal processing and theoretical computer science
   that we explain below.
By Theorem \ref{eisfalsesing}, we should restrict attention
   to smooth varieties.
Thus we focus on the following special case:

   Let $X\subset \BP V$ be a smooth
       variety,   and fix $r\in \BN$.
   Do   there exist infinitely many $d$ such that
      $ \s_r(v_d(X)) $ is cut out set-theoretically by equations of  degree $r+1$?

Theorem \ref{item_case_smooth_r} implies
   that the answer to this question is affirmative
   if it is affirmative for $X=\BP V$.

Thus it remains to resolve the following question:

\begin{question}\label{eis_for_PN}
    Let $V=\BC^{n+1}$ and fix a natural number $r$.
    Does there exist an integer $d_0=d_0(n,r)$,
    such that for infinitely many (or even all)  $d \ge d_0$,
    there exists an ideal $I \subset \Sym (S^d V^*)$ generated in degrees at most $r+1$,
    such that the (reduced) subvariety in $\BP (S^d V)$
    consisting of  the zero locus of $I$ is $\sigma_r(v_d (\BP V))$?
\end{question}

The equations of secant varieties  of  Veronese embeddings of $\BP V$ are studied intensively, see  \cite{LOver}  for
the state of the art in spring 2011.
The strongest result related to Question~\ref{eis_for_PN} is in \cite[Thm~1.1]{BuBu}.
There W.~Buczy\'nska and the first named author proved that for $r\leq d,e$ and either $r\leq 10$ or $n\leq 3$,
  the catalecticant minors \eqref{catideal} are enough to define $\s_r(v_{d+e}(\pp n))$ set-theoretically,
      and thus in these cases Question~\ref{eis_for_PN} has an affirmative answer.
In general, $\sigma_r(v_d (\BP V))$ is a component of a Rank locus
   for $r\leq \binom{\lfloor \frac d2\rfloor +n}n$, see  \cite[\S 1.3]{LOver}, \cite[Thms~4.5A, 4.10A]{MR1735271}.

\begin{remark}
There is little  information known about ideals of secant varieties:
 for any nondegenerate variety, the ideal of its $r$-th secant variety
 is empty in degree $r$ \cite[Lemma 2.2]{LM0} and for certain special examples,
e.g.~{\it sub-cominuscule varieties} (see   \cite[\S 5]{LMseries}) which include quadratic
 Veronese varieties and two-factor Segre varieties,
   the ideal is known
   to be generated in degree $r+1$ for all $r$.
\end{remark}

\subsection{How we were led to these questions}\label{appsintro}
For many applications, one needs defining equations for  secant varieties to Segre and Veronese varieties.
A typical problem that arises in applications is  as follows:  one is handed a tensor and needs
to decompose it into  a minimal sum of rank one tensors. It is natural to generalize
this decomposition to arbitrary varieties as follows:

\begin{definition}
  Let $X \subset \BP V$ be a  reduced scheme  and let $p \in \langle X \rangle$.
  \begin{itemize}
    \item Define $R_X(p)$ (the \emph{$X$-rank of $p$})
          to be the minimal number $r$, such that $p \in \langle p_1\hd p_r\rangle$ for some points $p_i \in X$.
    (Note that  $\sigma_r(X) \subset \BP V$
          is the closure of the set of points in $\langle X \rangle$ of $X$-rank at most $r$.)
    \item Define $\ur_X(p)$ (the \emph{$X$-border rank of $p$})
          to be the minimal number $r$, such that $p \in \sigma_r(X)$.
  \end{itemize}
\end{definition}

When $X$ is a Segre or Veronese variety, the $X$-rank is the smallest number $r$ of
rank one terms needed for a decomposition, and the $X$-border rank is the smallest
$r$ needed if one is statisfied with a decomposition accurate within an $\ep$ of one's choosing.

We consider $R_X$  and   $\ur_X$   as   functions  $\langle X \rangle \to \BN$
and if $L\subset \langle X \rangle$, then $R_X|_{L}$ and $\ur_X|_{L}$
denote  the restricted functions.

\begin{definition}\label{brppdef}
 Let $X\subset \BP V$ be a variety
 and $L\subset \BP V$ be a linear subspace.
 Let $Y := \reduced{(X\cap L)}$.
 \begin{itemize}
  \item We say $(X,L)$ is a \emph{rank preserving pair}
        or \rpp{} for short,
        if $\langle Y \rangle = L$
        and $R_{X}|_{L} = R_{Y}$ as functions.
  \item We say $(X,L)$ is a  {\it border rank preserving pair}
        or \brpp{} for short,
        if $\langle Y \rangle = L$
        and $\ur_{X}|_{L} = \ur_{Y}$ as functions,
        i.e., $\sigma_r(X) \cap L=\sigma_r(Y)$ for all $r$.
  \item Similarly we say $(X,L)$ is a \rpp[r]{} (respectively, a \brpp[r]{})
        if $R_{X}(p)=R_{X\cap L}(p)$ for all $p\in L$ with $R_X(p)\leq r$
        (respectively, $\sigma_s(X) \cap L=\sigma_s(Y)$ for all $s \leq r$).
 \end{itemize}
\end{definition}
Note that one always
has $R_{X}(p)\leq R_{X\cap L}(p)$ and $\ur_{X}(p)\leq \ur_{X\cap L}(p)$.

Theorem \ref{thm_eis_reduce_to_PN} may be rephrased in this language:

\begin{theorem}[rephrasing of Theorem \ref{thm_eis_reduce_to_PN}]
  For all smooth subvarieties
  $X\subset \BP V$ and all $r\in \BN$,
  there exist  an integer $d_0$
  such that for all $d \ge d_0$,
  the pair $(v_d ( \BP V),  \langle v_d(X) \rangle)$ is a \brpp[r].
\end{theorem}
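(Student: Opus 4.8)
The plan is to unwind the definition of \brpp[r] and recognize the assertion as nothing more than a uniform-in-$s$ application of Theorem~\ref{thm_eis_reduce_to_PN}. Fix the smooth $X\subset \BP V$ and $r\in\BN$, write $L := \langle v_d(X)\rangle$ and $Y := \reduced{(v_d(\BP V)\cap L)}$. By the definition of \brpp[r], the goal is to prove $\sigma_s(v_d(\BP V))\cap L = \sigma_s(Y)$ as sets for every $s\le r$. I would take $d_0 := r-1 + Got(h_X)$ and establish this for all $d\ge d_0$.

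First I would identify $Y$ with $v_d(X)$ at the level of sets. The inclusion $v_d(X)\subseteq Y$ is immediate, since $v_d(X)\subseteq v_d(\BP V)$ and $v_d(X)\subseteq \langle v_d(X)\rangle = L$. For the reverse inclusion I would invoke Theorem~\ref{thm_eis_reduce_to_PN} in the case $s=1$ (which requires only $d\ge Got(h_X)$): because $\sigma_1$ of any variety is the variety itself, the theorem reads $v_d(X) = \reduced{(v_d(\BP V)\cap L)} = Y$. Since $X$ is a variety, $v_d(X)$ is reduced, so in fact $Y = v_d(X)$ as reduced schemes; in particular $\langle Y\rangle = L$ and $\sigma_s(Y) = \sigma_s(v_d(X))$ for every $s$.

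With this identification in hand, the main step is a direct appeal to Theorem~\ref{thm_eis_reduce_to_PN} for each $s$ with $1\le s\le r$. The key observation is that the bound $d\ge s-1+Got(h_X)$ appearing there is weakest at $s=r$, while $Got(h_X)$ depends only on the Hilbert polynomial of $X$ and not on $s$; hence the single choice $d_0 = r-1+Got(h_X)$ makes the hypothesis hold simultaneously for all $s\le r$. For each such $s$ the theorem gives $\sigma_s(v_d(X)) = \reduced{(\sigma_s(v_d(\BP V))\cap L)}$, which at the level of underlying point sets is exactly $\sigma_s(v_d(X)) = \sigma_s(v_d(\BP V))\cap L$. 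Combining this with $\sigma_s(Y) = \sigma_s(v_d(X))$ yields $\sigma_s(v_d(\BP V))\cap L = \sigma_s(Y)$, which is the defining condition of \brpp[r].

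I expect no serious obstacle, as this statement is by design a reformulation of Theorem~\ref{thm_eis_reduce_to_PN}; the only points demanding care are bookkeeping-level. One must consistently pass between the scheme-theoretic equalities of Theorem~\ref{thm_eis_reduce_to_PN} (which carry a $\reduced{(\cdot)}$) and the set-theoretic equalities demanded by the \brpp[r] definition, and one must check that the uniform bound $d_0$ really covers every $s\le r$ rather than only $s=r$. The former is harmless, since taking the reduced structure does not change the underlying set and $\sigma_s$ depends only on that set; the latter follows from the monotonicity of the bound in $s$.
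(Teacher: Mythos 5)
Your proposal is correct and matches the paper's treatment: the paper offers no separate argument for this statement, regarding it as an immediate rephrasing of Theorem~\ref{thm_eis_reduce_to_PN}, and your write-up simply supplies the bookkeeping (identifying $\reduced{(v_d(\BP V)\cap \langle v_d(X)\rangle)}$ with $v_d(X)$ via the $s=1$ case, and noting that $d_0=r-1+Got(h_X)$ satisfies the hypothesis for every $s\le r$ simultaneously). The only nitpick is wording: the bound $d\ge s-1+Got(h_X)$ is \emph{strongest}, not weakest, at $s=r$, which is exactly why the single $d_0$ covers all smaller $s$.
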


\subsection*{Strassen's conjecture}
In complexity theory one is interested in finding upper and lower
bounds for the number of operations required to execute a bilinear
map. One is  especially interested in  the particular
bilinear map matrix multiplication. V. Strassen   \cite[p.~194, \S4, Vermutung~3]{MR0521168}
asked
if there exists an algorithm that simultaneously computes two different
matrix multiplications, that costs less than the sum of
the best algorithms for  the individual matrix
multiplications.  If not, one says that  {\it additivity} holds for matrix
multiplication. Similarly, define additivity for arbitrary bilinear
maps.
 \begin{conjecture}[Strassen]  \cite[p.~194, \S4, Vermutung~3]{MR0521168}  \label{strassenconj} Additivity holds
 for   bilinear maps.
 \end{conjecture}

This may be rephrased as:

\begin{conjecture}[Strassen]\label{conj_strassen_rpp}
Let $A_j$ be vector spaces
  Write $A_j=A_j'\op A_j''$ and let
$  L=\BP ((A_1'\otc A_k')\op (A_1''\otc A_k''))$
Then
$$(X,L)=( Seg(\BP A_1\ctimes \BP A_k), \BP((A_1'\otc A_k')\op (A_1''\otc A_k'')))
$$ is a \rpp.
\end{conjecture}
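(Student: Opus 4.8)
The plan is to first compute the base locus $Y = \reduced{(X\cap L)}$ explicitly and thereby reduce the statement to a pure additivity statement for tensor rank. Write $W_1 = A_1'\otc A_k'$ and $W_2 = A_1''\otc A_k''$, so that $L = \BP(W_1\op W_2)$. I claim a rank one tensor $a_1\ot\cdots\ot a_k$ lies in $W_1\op W_2$ if and only if all factors lie in the primed subspaces or all factors lie in the doubly primed subspaces: expanding $\bigotimes_j(a_j'+a_j'')$ in the direct sum decomposition $\bigotimes_j A_j = \bigoplus_S \bigl(\bigotimes_{j\in S}A_j'\ot\bigotimes_{j\notin S}A_j''\bigr)$, every mixed summand ($\emptyset\neq S\neq\{1,\dots,k\}$) must vanish, and for $k\ge 2$ this forces one of the two extreme cases. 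Hence, setting $X_1 = Seg(\BP A_1'\ctimes\BP A_k')$ and $X_2 = Seg(\BP A_1''\ctimes\BP A_k'')$, one gets $Y = X_1\sqcup X_2$, the disjoint union of two block Segre varieties spanning $\BP W_1$ and $\BP W_2$; in particular $\langle Y\rangle = L$, as required.

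Next I would record the two easy reductions. Since $W_1$ and $W_2$ are transverse coordinate blocks, projecting any $X$-decomposition of a tensor $p_i\in W_i$ onto $W_i$ (each rank one term maps to a rank one term or to zero) shows its $X$-rank is already attained inside the block, so $R_{X_i}(p_i) = R_X(p_i)$; and because $Y = X_1\sqcup X_2$, every $Y$-decomposition of $p = p_1+p_2$ with $p_i\in W_i$ splits along the two transverse blocks, giving $R_Y(p) = R_X(p_1)+R_X(p_2)$. The inclusion $Y\subset X$ always gives $R_X|_L\le R_Y$, so the entire content of the \rpp{} assertion is the reverse inequality
\[
  R_X(p_1+p_2)\ \ge\ R_X(p_1)+R_X(p_2),\qquad p_1\in W_1,\ p_2\in W_2,
\]
which is exactly Strassen's additivity (direct sum) conjecture for tensor rank.

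For this inequality the natural tool is the substitution method: given a minimal decomposition of $p_1\op p_2$ into rank one tensors, I would peel off one term contributing to, say, $p_1$, modify the remaining terms by a substitution in the first tensor factor that kills the used $A_1'$ direction at the cost of exactly one in rank, and then induct on $R_X(p_1)$. This runs cleanly when the blocks are small: for $k=2$ the tensor $p_1\op p_2$ is a block diagonal matrix and additivity of matrix rank is immediate, and the argument also applies when one of the $p_i$ has minimal rank or when some factor space $A_j'$ is low dimensional.

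The main obstacle is the general case of this last inequality. A single rank one term of a minimal decomposition of $p_1\op p_2$ can have nonzero components in many of the mixed summands $\bigotimes_{j\in S}A_j'\ot\bigotimes_{j\notin S}A_j''$, so there is no a priori way to separate the decomposition into a block~1 part and a block~2 part without increasing the count, and the substitution step need not leave $p_2$ untouched. Controlling these cross terms is precisely the difficulty in Strassen's conjecture; accordingly I would expect a full proof only for restricted $k$ and block dimensions, and would leave the general case open.
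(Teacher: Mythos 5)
The statement you were handed is not a theorem of the paper: it is presented there as a conjecture, namely the \rpp{} reformulation of Strassen's additivity conjecture (Conjecture~\ref{strassenconj}), and the paper offers no proof of it. So there is no paper proof to compare against, and your decision to reduce rather than to prove is exactly the right call. Moreover, your reduction is correct in all details: for $k\ge 2$ the only nonzero rank-one tensors in $W_1\op W_2$ are those lying entirely in $W_1$ or entirely in $W_2$, so $\reduced{(X\cap L)}=X_1\sqcup X_2$; transversality of the two blocks splits any $Y$-decomposition of $p_1+p_2$ into a $W_1$-part and a $W_2$-part; and the factor-wise projections $A_j\to A_j'$ (respectively $A_j\to A_j''$) give $R_{X_1}(p_1)=R_X(p_1)$ and $R_{X_2}(p_2)=R_X(p_2)$. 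This establishes precisely the equivalence that the paper asserts without proof in \S\ref{brppintro}, where these conjectures are described as uniform \rpp{}-formulations of the conjectures of Strassen and Comon: the pair $(X,L)$ is \rpp{} if and only if tensor rank is additive over direct sums.

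For comparison with what the paper actually proves: its only established instance of this conjecture is the proposition that Strassen's conjecture and its border rank version hold for $Seg(\pp 1\times \BP B\times \BP C)$, i.e., the case where the first factor is split into two lines. The paper's proof there is the same mechanism as your easy cases: project $\BP^1\to\BP^0$, so that $e_1\ot m_1+e_2\ot m_2$ becomes the block-diagonal element $m_1\op m_2\in B\ot C$, and invoke additivity of matrix rank, using that projections cannot increase rank or border rank. So your remarks on $k=2$ and on low-dimensional factor splittings subsume the one case the paper settles, and your closing assessment---that controlling the cross terms in the substitution argument is the entire open content of Strassen's conjecture---coincides with the paper's own stance. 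One caveat worth recording: Shitov has since produced counterexamples to Strassen's direct sum conjecture for tensor rank, so the inequality you isolate as the missing step is now known to be false in general; by your (correct) equivalence, the conjectured \rpp{} statement itself therefore fails for some choices of the subspaces $A_j',A_j''$.
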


 \subsection*{Comon's conjecture}  In signal processing one is interested  in
 expressing a given tensor as sum of a minimal number of decomposable tensors.
 Often the tensors that arise have symmetry or at least partial symmetry. Much
 more is known about symmetric tensors than general tensors so it would be convenient
 to be able to reduce questions about tensors to questions about symmetric
 tensors. In particular, if one is handed a symmetric tensor  which has symmetric rank
 $r$, can it have lower rank as a tensor?
 \begin{conjecture}[Comon]\label{comonconj}
  \cite{Como00:ima}
The tensor rank of a symmetric tensor equals its
symmetric tensor rank. That is, for $p\in \BP S^dV$, considering $S^dV\subset V^{\ot d}$,
$R_{v_d(\BP V)}(p)= R_{Seg(\BP V\ctimes \BP V)}(p)$.
\end{conjecture}

This may be rephrased as:

\begin{conjecture}[Comon]\label{conj_comon_rpp}
Let $\tdim A_j=\mathbf{a}$ for each $j$ and identify each $A_j$ with a vector space $A$.
Consider $L=\BP (S^k A)\subset A_1\otc A_k$.
Then
$$(X,L)= (Seg(\BP A \ctimes \BP A ), \BP  (S^k A))
$$ is a \rpp{}.
\end{conjecture}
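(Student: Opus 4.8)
The plan is to first determine the geometry of the intersection and then reduce the assertion to a purely rank-theoretic comparison. A decomposable tensor $v_1\otimes\cdots\otimes v_k$ lies in $S^kA$ exactly when the factors $v_1,\dots,v_k$ are pairwise proportional, so the points of $X\cap L$ are precisely the classes $[v^{\otimes k}]$; hence $\reduced{(X\cap L)}=v_k(\BP A)$ is the $k$-th Veronese variety. Because $k$-th powers span $S^kA$ (polarization), we get $\langle v_k(\BP A)\rangle=\BP(S^kA)=L$, which is the first condition in the definition of $\rpp{}$. It remains to prove the equality of rank functions $R_X|_L=R_{v_k(\BP A)}$.

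One inequality, $R_X(p)\le R_{v_k(\BP A)}(p)$, is automatic from the remark following Definition~\ref{brppdef}, since a decomposition into powers is in particular a Segre decomposition; so the entire content is the reverse bound, namely that \emph{every symmetric $p\in S^kA$ has a power-sum decomposition of length at most $R_X(p)$}. First I would fix a minimal Segre decomposition $p=\sum_{i=1}^{r} v_1^{(i)}\otimes\cdots\otimes v_k^{(i)}$ with $r=R_X(p)$ and attempt to convert it into a decomposition into at most $r$ pure powers. Two handles are available. Since $p$ is invariant under the permutation action of $\mathfrak{S}_k$, one can apply the symmetrization projector $\pi\colon A^{\otimes k}\to S^kA$, which fixes $p$ and sends each summand to a symmetric product $v_1^{(i)}\odot\cdots\odot v_k^{(i)}$. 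Independently, the catalecticant (symmetric flattening) lower bounds for $R_{v_k(\BP A)}(p)$ coincide with the ordinary flattening lower bounds for $R_X(p)$, because for symmetric $p$ every flattening factors through $S^{\bullet}A$; thus the two rank functions carry identical classical obstructions, which is evidence for their equality. As a base case I would treat $k=2$, where $S^2A\subset A\otimes A$ is the space of symmetric matrices and the Autonne--Takagi factorization yields tensor rank $=$ symmetric rank, and then try to induct on $k$ by contracting one factor of $p$ against a generic covector to land in $S^{k-1}A$.

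The hard part will be the conversion step. Symmetrization does \emph{not} preserve the rank-one structure: $\pi$ replaces a decomposable summand by a symmetric product $v_1^{(i)}\odot\cdots\odot v_k^{(i)}$, and rewriting such a product as a sum of pure powers through the polarization identity costs on the order of $2^{k-1}$ powers per term, which wrecks the length count $r$. There is no a priori reason that a \emph{minimal} Segre decomposition of a symmetric tensor may be chosen to consist of symmetric, or even symmetrizable, summands, and closing exactly this gap is the crux. I therefore expect this to be the genuine obstruction: any successful argument must \emph{force} a length-$r$ decomposition to respect the $\mathfrak{S}_k$-symmetry rather than merely average over it, and it is precisely the apparent failure of such a forcing mechanism as $k$ grows that makes this conjecture so delicate.
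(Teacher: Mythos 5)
The statement you were asked to prove is not a theorem of this paper, and the paper contains no proof of it: it is the \rpp{} reformulation of Comon's conjecture (Conjecture~\ref{comonconj}), presented in \S\ref{appsintro} precisely as one of the open problems from applications that motivated the authors' work. Your reduction is set up correctly: a nonzero decomposable tensor lies in $S^kA$ exactly when its factors are pairwise proportional, so $\reduced{(X\cap L)}=v_k(\BP A)$, whose span is all of $\BP (S^kA)$; and $R_X|_L\leq R_{v_k(\BP A)}$ is the trivial direction noted after Definition~\ref{brppdef}. Thus the entire content is the reverse inequality --- that the tensor rank of a symmetric tensor equals its symmetric rank --- which is exactly Conjecture~\ref{comonconj} itself, not a lemma one should expect to dispatch inside this argument.

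Your diagnosis of where the attempt breaks is accurate, and the break is genuine. Symmetrization replaces rank-one summands by symmetric products, and re-expanding those via polarization multiplies the length by roughly $2^{k-1}$; the agreement of flattening and catalecticant lower bounds only shows the two ranks obey the same classical lower bounds, which are far from tight; the $k=2$ case is true, but generic contraction does not control rank in an induction on $k$. The paper itself records only partial evidence, in \S\ref{brppintro}: the rank version holds for general points and for points of $\s_2(v_d(\pp n))$ (via the normal form $x^{d-1}y$ and its flattenings), and the border rank version holds whenever the defining equations of $\s_r(v_d(\pp n))$ are inherited from the ambient tensor space, as happens for $\s_r(v_d(\pp 1))$ for all $r,d$. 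Finally, you should know that no way of closing your gap exists in general: Comon's conjecture was disproved by Shitov (2018), who produced an order-$3$ symmetric tensor whose tensor rank is strictly smaller than its symmetric rank. So the \emph{forcing mechanism} you correctly identify as the crux cannot exist in full generality, and your refusal to claim it is the right conclusion; any complete account of this statement must either restrict to the special cases above or refute it.
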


Our project began with the idea to study these problems simultaneously.
We have included a discussion of these related conjectures in the hope of bringing them
to the attention of the community of algebraic geometers.
A few  general results on rank and border rank preserving pairs
are given in  \S\ref{brppintro}.
  Other results have   appeared elsewhere:
   \cite[Rem.~2.3]{BLtensor}, \cite[Thm~7.1]{BLtensor},  \cite[Cor.~1.10]{BLsecant}, and
   \cite[Rem.~2.3]{CCG_monomials} or \cite[Exercise~3.2.2.2]{Ltensor}.

\subsection*{Border rank versions}

In the cases of the conjectures of Comon (\ref{conj_comon_rpp})
   and Strassen (\ref{conj_strassen_rpp}), it
is natural to ask the corresponding questions for border rank.
For Strassen's conjecture, this has already been answered negatively:

\begin{theorem}[Sch\"onhage] \cite{MR623057}  The pair
$$
(X,L)=(Seg(\BP A\times \BP B\times \BP C), (A'\ot B'\ot C')\op (A''\ot B''\ot C''))
$$
  is not a \brpp{}
starting with the case
$\dim A \geq 5=2+3$, $\dim B\geq 6=3+3$, $\dim C\geq 7=6+1$ , where the splittings
into sums give the dimensions of the  subspaces $A', A''$, etc.
\end{theorem}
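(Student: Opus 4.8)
The plan is to exhibit a single point $p\in L$ whose border rank with respect to $X=Seg(\BP A\times\BP B\times\BP C)$ is strictly smaller than its border rank with respect to $Y:=\reduced{(X\cap L)}$; since one always has $\ur_X(p)\le\ur_{X\cap L}(p)=\ur_Y(p)$, producing one strict inequality is exactly a failure of $brpp$.

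First I would identify $Y$. Writing $A=A'\op A''$, $B=B'\op B''$, $C=C'\op C''$, a rank one tensor $a\ot b\ot c$ lies in $L=(A'\ot B'\ot C')\op(A''\ot B''\ot C'')$ only if its six ``mixed'' block components vanish; decomposing $a=a'+a''$, and similarly $b,c$, a short case analysis shows this forces the tensor into one of the two diagonal blocks. Hence, as sets,
\[
 Y=Seg(\BP A'\times\BP B'\times\BP C')\ \sqcup\ Seg(\BP A''\times\BP B''\times\BP C''),
\]
the disjoint union of two sub-Segre varieties spanning the two independent summands of $L$.

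Next I would use that border rank is additive across a direct sum. Since $Y$ is a union of two varieties lying in complementary linear subspaces, and, the two summands being independent, the relevant joins are already closed, for $p=p'+p''$ with $p'\in A'\ot B'\ot C'$ and $p''\in A''\ot B''\ot C''$ one has $\ur_Y(p)=\ur_{Seg'}(p')+\ur_{Seg''}(p'')$, where $Seg'$ and $Seg''$ denote the two factors. I take $p'=\sum_{i=1}^2\sum_{j=1}^3 x_i\ot y_j\ot z_{ij}$ (an outer product filling $C'$) and $p''=\sum_{l=1}^3 u_l\ot v_l\ot w$ (a full rank $3\times 3$ tensor), for suitable bases. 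The $C'$ flattening of $p'$ is an isomorphism $(C')^*\to A'\ot B'$, so $\ur_{Seg'}(p')=6$, and $p''$ is a rank three matrix, so $\ur_{Seg''}(p'')=3$; therefore $\ur_Y(p)=9$.

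The crux is to show $\ur_X(p)\le 8$, and this is Sch\"onhage's degeneration, which genuinely uses the larger ambient spaces $A\supset A',A''$, and likewise for $B,C$. One forms the six products $(x_i+\ep a_{ij})\ot(y_j+\ep b_{ij})\ot c_{ij}(\ep)$ over the $2\times 3$ grid, where the perturbations $a_{ij}\in A''$, $b_{ij}\in B''$ encode the inner product variables: the $\ep^0$ part recovers the outer product $p'$, while the $\ep^2$ cross terms $a_{ij}\ot b_{ij}$ assemble $p''$. The third factors $c_{ij}(\ep)\in C$ are chosen so that the order $\ep^1$ noise cancels; dividing by the appropriate power of $\ep$ and letting $\ep\to 0$ exhibits $p$ as a limit of tensors of rank at most $8$. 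Consequently $p\in\sigma_8(X)\cap L$ while $p\notin\sigma_8(Y)$, so $\sigma_8(X)\cap L\ne\sigma_8(Y)$ and $brpp$ fails. I expect this third step to be the only real obstacle: one must check that the first order terms cancel exactly, so that the limit is $p$ with no residual error, and track the number of rank one summands. The $2\times 3$ grid has only $(2-1)(3-1)=2$ interior cells available to carry inner product terms for free, so the third term of $p''$ costs one extra summand, giving in fact the sharper bound $\ur_X(p)\le 7$; any of these strict inequalities below $9$ suffices. The first two steps are formal once the description of $Y$ and the additivity of border rank over a direct sum are established.
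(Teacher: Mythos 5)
Your overall route is the same as the paper's (Sch\"onhage's Example~\ref{schex} in \S\ref{schsect}): realize a point of $L$ as $p=p'+p''$ with $p'$ the outer-product tensor $M_{2,1,3}$ and $p''$ an inner-product tensor, note that $Y=\reduced{(X\cap L)}$ is the disjoint union of the two block Segres in complementary subspaces so that $\ur_Y(p)=\ur(p')+\ur(p'')=9$, and then beat $9$ inside $X$ by Sch\"onhage's degeneration. Your first two steps are correct (and your padding trick even handles the stated split $3+3$ cleanly). The genuine gap is in the third step, which you rightly call the crux but describe in a configuration that cannot work: you place $p'$ in the $\ep^0$ part and $p''$ in the $\ep^2$ part. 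First, no rescaling $\ep^{-k}T(\ep)$ can then converge to $p'+p''$: for $k=0$ the limit is $p'$ alone, and for $k\ge 1$ the term $\ep^{-k}p'$ diverges, so both summands must be arranged to appear at the \emph{same} order of $\ep$. Second, with third factors $c_{ij}(\ep)$ limiting to the $z_{ij}$, the order-$\ep$ term is $\sum_{ij}(x_i\ot b_{ij}+a_{ij}\ot y_j)\ot z_{ij}$ plus a term in $A'\ot B'\ot C$; these pieces lie in the independent blocks $A'\ot B''\ot C'$, $A''\ot B'\ot C'$ and $A'\ot B'\ot C$, and since the $z_{ij}$ are linearly independent, the first two vanish only if all $a_{ij}=b_{ij}=0$. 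So the first-order cancellation you defer to is impossible as set up, and it is not the third factors that can do the cancelling.

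Sch\"onhage's construction, as the paper writes it out, is the reverse configuration: all six third factors are $\g+\ep^2 z_{ij}$ with the \emph{same} leading vector $\g\in C''$, so the six leading points $x_i\ot y_j\ot\g$ lie in the $6$-dimensional block $A'\ot B'\ot C''$ and together with one extra point are linearly dependent; one subtracts the seventh rank-one term $(\sum_i x_i)\ot(\sum_j y_j)\ot\g$ to kill the $\ep^0$ part; the $\ep^1$ part is killed not by the third factors but by imposing the linear constraints $\sum_i a_{ij}=0$ (each $j$) and $\sum_j b_{ij}=0$ (each $i$) on the perturbations, which is possible precisely because every third factor has the same leading $\g$; and then $p'$ and $p''$ emerge together at order $\ep^2$, namely $x_i\ot y_j\ot(\ep^2 z_{ij})$ assembles the outer product while the cross terms $\ep^2\, a_{ij}\ot b_{ij}\ot\g$ assemble the inner product. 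These sum constraints are also exactly why at most $(e-1)(\ell-1)=2$ independent cross terms can be produced --- the count you invoke. With the degeneration corrected, your conclusion goes through: $\ur_X(p'+M_{1,2,1})\le 7$, hence $\ur_X(p)\le 8<9=\ur_Y(p)$ by subadditivity, so $brpp$ fails. Note, however, that your ``sharper bound $\ur_X(p)\le 7$'' does not follow --- the correction term and the extra inner-product summand each cost one rank-one term --- but, as you say, any bound below $9$ suffices.
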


See \cite[\S 11.2]{Ltensor} for a discussion of   Sch\"onhage's theorem.

\subsection{A more precise version of Theorem \ref{eisfalsesing}}\label{moreprecisesect}
For a  reduced scheme  $X\subset \BP V$ and a point $x\in X$, the {\it tangent star} of $X$ at $x$, $T^{\star}_xX\subset \BP V$ is defined to be the union of the  points on the $\pp 1$'s
obtained as limits in the Grassmannian $\BG(1, \BP V)$ of $\BP_{x(t),y(t)}^1$'s  spanned by points
$x(t),y(t)$, with $x(t),y(t)\in X$ and $x(0)=y(0)=x$. Alternatively, consider the
incidence correspondence
 \[
  S_X:= \overline{\{ (x,y,z)\in X\times X\times \BP V\mid z\in \langle x,y\rangle\}},
 \]
 let
$\psi: S_X\ra X\times X$, $\mu: S_X\ra \BP V$ denote the projections, then
$T^{\star}_xX=\mu(\psi\inv(x,x))$.
   Note that if $x\in X$ is a smooth point, then
$\langle T^{\star}_xX\rangle$ is the embedded tangent projective space and
$T^{\star}_xX=\langle T^{\star}_xX\rangle$ (but the converse does not hold).

\begin{definition}\label{def_componently_defines}
   Let $I \subset \Sym (V^*)$ be a homogeneous ideal, and suppose $Y \subset \PP V$ is a reduced subscheme.
   Let $Z = Z(I)\subset \PP V$ be the scheme defined by $I$ and let $\reduced{Z}$ be the reduced subscheme.
   We say \emph{$I$ componently defines $Y$}, if $Y$ is a union of some of the irreducible components of $\reduced{Z}$.
   In case $Y$ is irreducible, this just means $Y$ is an irreducible component of $\reduced{Z}$.
\end{definition}

\begin{theorem}\label{eis_does_not_hold_for_singular_case_big_r}
   Let $X \subset \BP V$ be a  subvariety and let  $x \in X$ be a
singular point. Suppose $r \ge 2$ and let $d \ge 2r-1$.
   \begin{enumerate}
    \item  If $T^{\star}_xX \ne \langle T^{\star}_x X\rangle $, then
           $\sigma_r ( v_d(X)) \ne \reduced{\bigl(\sigma_r(v_d (\PP^n)) \cap \langle v_d (X) \rangle\bigr)}$.
    \item  Suppose $I(T^{\star}_xX)$ is the defining ideal of the
tangent star in $\langle T^{\star}_xX\rangle$.
           Suppose for some $q$, the homogeneous parts
             $\bigoplus_{i =0}^{q} I_i(T^{\star}_xX)$ componently define $T^{\star}_xX$
           (this happens for instance if the ideal $I(T^{\star}_xX)$ is
trivial in degrees $\le q$ and $T^{\star}_xX \ne \langle T^{\star}_xX\rangle$).
           Then $\sigma_r (v_d(X))$ is not defined
set-theore\-ti\-cal\-ly
             by equations of degree $\le q$.
   \end{enumerate}
\end{theorem}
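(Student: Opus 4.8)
The plan is to reduce both parts to a single local computation at the singular point $x$, in which the relevant slice of $\langle v_d(X)\rangle$ is identified linearly with the span of the tangent star. Write $x=[\ell]$ with $\ell\in V$, so that $v_d(x)=[\ell^d]$. Expanding $v_d$ along velocity vectors of curves in $X$ through $x$ and using the secant-line description of the tangent star, I would first show that the embedded tangent star of $v_d(X)$ at $v_d(x)$ is
\[
  T^{\star}_{v_d(x)}v_d(X)=\{[\ell^{d-1}m]\ :\ [m]\in T^{\star}_xX\},
\]
so that its linear span is $M:=\{[\ell^{d-1}m]:[m]\in\langle T^{\star}_xX\rangle\}$ and $[m]\mapsto[\ell^{d-1}m]$ is a linear isomorphism $\iota\colon\langle T^{\star}_xX\rangle\xrightarrow{\sim}M$ carrying $T^{\star}_xX$ onto $T^{\star}_{v_d(x)}v_d(X)$. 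Since every $[\ell^{d-1}m]$ lies on a tangent line to $v_d(\BP^n)$ we get $M\subseteq\sigma_2(v_d(\BP^n))$, and since the tangent star consists of limits of secant lines of $v_d(X)$ we get $M\subseteq\langle v_d(X)\rangle$; hence for $r\ge 2$,
\[
  M\subseteq\reduced{\bigl(\sigma_r(v_d(\BP^n))\cap\langle v_d(X)\rangle\bigr)}.
\]

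The heart of the matter is the structural claim that, for $d\ge 2r-1$,
\[
  \sigma_r(v_d(X))\cap M=\iota\bigl(T^{\star}_xX\bigr).
\]
The inclusion $\supseteq$ is immediate from $\iota(T^{\star}_xX)=T^{\star}_{v_d(x)}v_d(X)\subseteq\sigma_2(v_d(X))\subseteq\sigma_r(v_d(X))$. For $\subseteq$, a point $p=[\ell^{d-1}m]\in M$ has ambient border rank at most two, so the task is to upgrade membership in $\sigma_r(v_d(X))$ to membership in $\sigma_2(v_d(X))$ and then identify which such $p$ are reachable. For the upgrade I would invoke the gap and uniqueness theorem for symmetric tensor rank (Theorem~\ref{uniquenesscora} and the estimates behind it), valid precisely when $d\ge 2r-1$, to conclude that a point of $\sigma_r(v_d(X))$ of ambient border rank two already lies in $\sigma_2(v_d(X))$. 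Once $p\in\sigma_2(v_d(X))$, the classical Sylvester fact that $\ell^{d-1}m$ (for $m\notin\langle\ell\rangle$ and $d\ge 3$) is not a sum of two distinct $d$-th powers forces $p$ to be a \emph{limit} of secants of $v_d(X)$, i.e.\ $p\in T^{\star}_{v_d(x)}v_d(X)$, whence $[m]\in T^{\star}_xX$.

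Granting the structural claim, part (i) is immediate: as $T^{\star}_xX\ne\langle T^{\star}_xX\rangle$, pick $[m]\in\langle T^{\star}_xX\rangle\setminus T^{\star}_xX$; then $\iota([m])\in M$ lies in $\reduced{\bigl(\sigma_r(v_d(\BP^n))\cap\langle v_d(X)\rangle\bigr)}$ but not in $\sigma_r(v_d(X))$, so the two sets differ. For part (ii), suppose $\sigma_r(v_d(X))$ were cut out set-theoretically by forms of degree $\le q$. Pulling these back along the linear isomorphism $\iota$ yields forms of degree $\le q$ on $\langle T^{\star}_xX\rangle$ whose common zero locus is $\iota^{-1}\bigl(\sigma_r(v_d(X))\cap M\bigr)=T^{\star}_xX$ by the structural claim. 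These restricted forms therefore lie in $\bigoplus_{i=0}^{q}I_i(T^{\star}_xX)$, so their zero locus contains $\reduced{Z}$; combined with the always-valid $T^{\star}_xX\subseteq\reduced{Z}$, this forces $\reduced{Z}=T^{\star}_xX$, contradicting the hypothesis that no component of $\reduced{Z}$ equals $T^{\star}_xX$.

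The main obstacle is the structural claim, and specifically its hard inclusion: controlling the border-rank-two points that lie in $\sigma_r(v_d(X))$. The difficulty is genuine rather than bookkeeping. At a singular point the \emph{scheme-theoretic} (cactus) reachability of a tangent direction is governed by the Zariski tangent space and is strictly weaker than the \emph{secant} reachability governed by $T^{\star}_xX$, as the cuspidal curve already shows, where a length-two subscheme may point in a direction outside the tangent star. Consequently the Main Lemma (Lemma~\ref{lemma_p_is_in_span_of_intersection}) together with apolarity only delivers the weaker scheme-theoretic conclusion; the essential extra input is the border-rank gap coming from the $d\ge 2r-1$ hypothesis, which prevents the $r$ allotted points from conspiring, in the limit, to reach a border-rank-two form lying outside $T^{\star}_{v_d(x)}v_d(X)$.
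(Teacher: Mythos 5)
Your reduction of parts (i) and (ii) to the structural claim is sound, and for $r=2$ your argument is essentially the paper's; but the structural claim itself, $\sigma_r(v_d(X))\cap M=\iota(T^{\star}_xX)$, is \emph{false} for $r\ge 3$, and the tool you invoke cannot prove it. Theorem~\ref{uniquenesscora} compares rank and border rank for the ambient Veronese $v_d(\BP V)$; it gives no way to upgrade $p\in\sigma_r(v_d(X))$ of ambient border rank two to $p\in\sigma_2(v_d(X))$. What Lemma~\ref{drpoints} and Corollary~\ref{cor_R_subset_Q} actually give is a scheme $R'\subset X$ of degree $r$, smoothable in $X$, with $p\in\langle v_d(R')\rangle$ and containing the minimal degree-two scheme $R_z$ at $x$ in direction $m$; but smoothability of $R'$ in $X$ does not descend to the subscheme $R_z$ unless $R_z$ is a union of connected components of $R'$, which for $r\ge3$ it need not be. Concretely, take the paper's own example: $X\subset\PP^3$ the three concurrent coordinate lines with local branch coordinates $u_1,u_2,u_3$, $x=[\ell]$, and $m=e_1+e_2+e_3$, so $[m]\in\langle T^{\star}_xX\rangle\setminus T^{\star}_xX$. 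The degree-three scheme $Z\subset X$ with ideal $(u_1+u_2-2u_3)+\mathfrak{m}_x^2$ is the flat limit of three points moving to $x$ along the three branches with speeds $a,b,c$ chosen so that $(bc,ac,ab)=(1,1,-2)$, hence is smoothable in $X$; and $\langle v_d(Z)\rangle=\BP\bigl(\ell^{d-1}\cdot\{w\in\hat T_xX : w_1+w_2-2w_3=0\}\bigr)$ contains $[\ell^{d-1}m]$. Thus $[\ell^{d-1}m]\in\sigma_3(v_d(X))\cap M$ although $[m]\notin T^{\star}_xX$. Worse, varying the speeds shows \emph{every} point of $M$ lies in $\sigma_3(v_d(X))$ here, so your slice contains no witness point at all when $r=3$, and both parts of your proof collapse; note this happens for every $d$, so the hypothesis $d\ge 2r-1$ cannot rescue the claim.

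The paper circumvents exactly this by enlarging the slice: it works with $W=\langle\BP\hat T_{v_d(x)}v_d(X)\cup\{y_1,\dotsc,y_{r-2}\}\rangle$ for $r-2$ auxiliary points of $v_d(X)$ away from $x$ (Lemma~\ref{lemma_T_is_contained_in_varsigma_case_big_r}), and analyzes only points $p\in W$ outside the degenerate subspaces (Lemma~\ref{lemma_sigma_r_vdX_has_only_tangent_star_case_big_r}). For such $p$, Corollary~\ref{cor_R_subset_Q} forces the minimal scheme to be $R_z\cup\{y_1,\dotsc,y_{r-2}\}$, of degree exactly $r$; it must therefore coincide with the degree-$r$ smoothable witness, so $R_z$ becomes a connected component of a scheme smoothable in $X$, and smoothability, being local, descends to give $z\in T^{\star}$. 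In other words, the auxiliary points exhaust the degree budget and rule out precisely the larger local schemes at $x$ that defeat your version. The resulting description is weaker than your clean equality --- $\sigma_r(v_d(X))\cap W$ is the cone over the tangent star with vertex $\langle y_1,\dotsc,y_{r-2}\rangle$, up to extra components lying in the degenerate subspaces --- but it suffices for (i), and for (ii) one restricts a putative ideal generated in degrees $\le q$ to $W$ exactly as you restrict to $M$, the vertex of the cone being harmless for the degree count.
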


We prove Theorem \ref{eis_does_not_hold_for_singular_case_big_r} in \S\ref{section_counterexamplesp}.

\subsection{A uniqueness theorem for symmetric tensor rank}

The following theorem is a consequence of Lemma \ref{lemma_p_is_in_span_of_intersection}.
On one hand, it may be viewed as a generalization of the theorem of
Comas and Seguir   \cite{CS}   that states if the rank of a point
in $\BP (S^d\BC^2)$ is larger than its border rank, and the border
rank is small, the rank must be at least $\lfloor \frac d2\rfloor +2$. (Their theorem gives
more precise information about ranks.)
On the other hand, it also gives a criterion for uniqueness of an expression
of a point as a sum of $d$-th powers that does not rely on a general point
assumption   (e.g.~\cite{MR1859030}, \cite{MR2225496})  or a Kruskal-type test  \cite{MR0444690}.

\begin{theorem}\label{uniquenesscora}
   Let $p\in \BP S^dV$.
   If $R_{v_d(\BP V)}(p)\leq \frac {d+1} 2$,
     i.e., the symmetric tensor rank of $p$ is at most  $\frac {d+1} 2$,
     then $R_{v_d(\BP V)}(p)=\ur_{v_d(\BP V)}(p)$
     and the expression of $p$ as a sum of $R_{v_d(\BP V)}(p)$
     $d$-th powers is unique (up to trivialities).
\end{theorem}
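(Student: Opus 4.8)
The plan is to deduce Theorem~\ref{uniquenesscora} directly from the Main Lemma (Lemma~\ref{lemma_p_is_in_span_of_intersection}) applied to the pair $X = \BP V$ and a $0$-dimensional scheme $R$ supporting a candidate rank-$r$ decomposition. Write $r := R_{v_d(\BP V)}(p)$ and set $s := \ur_{v_d(\BP V)}(p) \le r$. The essential idea is that whenever $p$ admits an expression as a sum of $r$ $d$-th powers, the supporting points define a reduced $0$-dimensional scheme $R \subset \BP V$ of degree $r$ with $p \in \langle v_d(R)\rangle$, and conversely any other decomposition or any border-rank witness gives another such scheme. We want to show that with $r \le \frac{d+1}{2}$, the hypothesis forces $s = r$ and forces the scheme $R$ to be unique.

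First I would set up the translation between decompositions and $0$-dimensional schemes: a decomposition $p = \sum_{i=1}^{r} \lambda_i (v_d(x_i))$ corresponds to the reduced scheme $R = \{x_1,\dots,x_r\}$, and $p \in \langle v_d(R)\rangle$. The key numerical observation is that a $0$-dimensional scheme of degree at most $r \le \frac{d+1}{2}$ satisfies $2r - 1 \le d$, so $d \ge 2r-1 \ge r-1 + Got(h_R)$ once we control the Gotzmann number of a length-$r$ scheme in $\BP^1$-like situations (here the relevant Gotzmann number of a finite scheme of degree $r$ is $r$, giving $r - 1 + r = 2r-1 \le d$). Thus the Main Lemma applies to any two candidate supporting schemes $R$ and $R'$.

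The heart of the argument is the following. Suppose $R, R'$ are two $0$-dimensional schemes, each of degree $\le r$, with $p \in \langle v_d(R)\rangle \cap \langle v_d(R')\rangle$. Applying Lemma~\ref{lemma_p_is_in_span_of_intersection} (with the roles of $X$ and $R$ played by $R'$ and $R$, noting the lemma's hypothesis $d \ge r-1 + Got(h)$ is met because each scheme has degree $\le r$ and $d \ge 2r-1$) yields
\[
  \langle v_d(R)\rangle \cap \langle v_d(R')\rangle = \langle v_d(R \cap R')\rangle.
\]
Hence $p \in \langle v_d(R\cap R')\rangle$, so $p$ is already spanned by the $d$-th Veronese image of the intersection scheme $R \cap R'$, which has degree $\deg(R\cap R') \le \min(\deg R, \deg R') \le r$. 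If $R \ne R'$ were both reduced of degree exactly $r = R_{v_d(\BP V)}(p)$, then $R \cap R'$ has strictly smaller degree, producing an expression of $p$ supported on fewer than $r$ points; since these points can be taken reduced, this contradicts the minimality of the rank $r$. This simultaneously forces uniqueness (any two rank-$r$ supporting schemes coincide) and, by running the same argument with $R'$ a scheme witnessing border rank $s$, forces $s = r$ (a border-rank-$s$ scheme with $s < r$ would again yield a lower-rank decomposition of $p$ via the intersection).

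\textbf{The main obstacle} will be the careful handling of non-reduced schemes and the precise Gotzmann bookkeeping. A border-rank witness is a limit and naturally produces a possibly non-reduced $0$-dimensional scheme $R'$, so when I intersect it with a reduced rank-witness $R$ I must argue that $p \in \langle v_d(R\cap R')\rangle$ still yields an \emph{honest} smaller rank, i.e., that a point spanned by a finite scheme of degree $<r$ truly has rank $<r$ in the reduced sense. This requires using $d$ large relative to the scheme's degree to guarantee that the span $\langle v_d(R\cap R')\rangle$ is genuinely realized by reduced points, or else arguing directly with the scheme structure; the bound $r \le \frac{d+1}{2}$ is exactly what makes the Gotzmann hypothesis hold for \emph{all} the intermediate schemes of degree up to $r$, and verifying that it is not merely $r-1 + Got(h_R)$ but uniform across the intersection schemes is the delicate point. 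The uniqueness ``up to trivialities'' clause then follows because once the supporting reduced scheme is unique, the scalars $\lambda_i$ are determined by $p$ up to the obvious rescaling of each $v_d(x_i)$.
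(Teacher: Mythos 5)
Your proposal is correct and takes essentially the same route as the paper: the paper packages your intersection argument as Corollary~\ref{cor_R_subset_Q} (the Main Lemma~\ref{lemma_p_is_in_span_of_intersection} applied with $X=Q$, plus minimality of $R$), using the same numerical bookkeeping $\deg(R\cup Q)\le 2r\le d+1$. Note also that the obstacle you flag about non-reduced border-rank witnesses dissolves immediately, since $R\cap R'$ is a subscheme of the \emph{reduced} rank witness $R$ and is therefore automatically a union of honest points of degree $<r$.
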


 We prove  Theorem \ref{uniquenesscora}  in \S\ref{sect_reduction_to_PN}.

\begin{remark}
  In contrast to the Veronese case,   such a
  result does not hold  for Segre varieties $Seg(\BP V_1\ctimes \BP V_k)\subset \BP (V_1\otc V_k)$.
  When $k=2$ rank equals border rank and there is no uniqueness, and when $k>2$ elements of border rank two
  can have rank $2,3\hd k$.
\end{remark}

\subsection*{Overview}
  In \S\ref{possect} we first review facts about Hilbert schemes, including Gotzmann's regularity result,
and explain the relationship between the study of the smoothable component of the Hilbert scheme with
the study of secant varieties.
We then give the proofs of results concerning smooth varieties.
In \S\ref{section_singular} we discuss the cases of singular varieties,
  first positive results and then we construct explicit counter examples to the EKS conjecture.
We conclude,  in \S\ref{brppintro}, with a brief collection of examples   of $(b)\rpp$'s
  and examples of pairs which are not $(b)\rpp$.

\subsection*{Acknowledgments}

We thank Daniel Erman, Kyungyong Lee, Zach Teitler and Gavin Brown for their hints
  and patient  help with   scheme theory,
  particularly zero-dimensional schemes and their Hilbert schemes, David Eisenbud for
  discussing  his question with us,
  Giorgio Ottaviani for his suggestion to write down a complete intersection example,
  Frank-Olaf Schreyer and Vivek Shende for useful discussions,
   in particular references regarding Gorenstein schemes,
   and for suggesting the equivalence in Lemma~\ref{lemma_equivalence_of_Gorenstein}.
We also thank Roland Abauf and the referee for many useful suggestions to improve the exposition.

\section{Proofs of positive results}\label{possect}

\subsection{Hilbert schemes and regularity}

Let $X \subset \BP V$ be a subscheme.

\donote{Notation.}

  \begin{itemize}
    \item   $\langle X \rangle\subset  \BP V$ denotes the   scheme-theoretic  linear span of $X$.
    \item   $\reduced{X}$ denotes the reduced subscheme of $X$.
    \item  $I(X) \subset \Sym (V^*) $   denotes the homogeneous, saturated ideal defining of $X$.
          The $d$-th homogeneous piece of $I(X)$ is denoted $I_d(X) \subset S^d V^*$.
          The ideal sheaf of $X$ is denoted by $\ccI_X \subset \ccO_{\BP V}$,
          so that $H^0( \ccI_X(d)) = I_d(X)$.
    \item For a positive integer $d$ the \emph{$d$-th Veronese reembedding} of $X$,
          denoted  $v_d(X) \subset \BP S^d V$,
          is the subscheme defined by the ideal in $\Sym (S^d V^*)$ which is the kernel of the following composition:
          \[
               \Sym(S^d V^*)  = \bigoplus_{k=0}^{\infty} S^{k} (S^d V^*)
               \twoheadrightarrow  \bigoplus_{k=0}^{\infty} S^{kd} V^*
               \hookrightarrow  \bigoplus_{k=0}^{\infty} S^{d} V^*
                   =   \Sym(V^*)
               \twoheadrightarrow   \Sym(V^*) / I(Y).
          \]
  \end{itemize}

Note that for a scheme $X\subset \BP V$ the linear span $\langle X \rangle$ is equal to $\BP ({I_1(X)}^{\perp})$,
 i.e., the projective zero locus of the linear part of $I(X)$.
In particular, $\langle v_d(X) \rangle = \BP ({I_d(X)}^{\perp}) \subset \BP S^d V$.
It is a standard  fact that $v_d (X)$ is isomorphic to $X$ as an abstract scheme,
  see, e.g.,  \cite[Chapter~2, Theorem~2.4.7]{ega2} or \cite[Ex.~II 5.13]{MR0463157}.

For a review on Hilbert schemes, Hilbert polynomials,  Hilbert functions, and regularity see, e.g.,
\cite{nitsure_construction_of_Hilb_and_Quot} and references therein.

Given a projective subscheme $X \subset \BP V$ with ideal sheaf $\ccI_X$,
we say $\ccI_X$ is \emph{$\delta$-regular},
if $H^i(\ccI_X(\delta - i)) =0$  for all $i>0$.
Serre's vanishing theorem implies that every $X \subset \BP V $ has a $\delta$-regular ideal sheaf
for sufficiently large $\delta$.

\begin{proposition}\label{prop_delta_regular}
Suppose $\ccI_X$ is $\delta$-regular with $\delta \ge 0$.
  \begin{enumerate}
     \item \label{item:delta_regular_then_d_regular}
           $\ccI_X$ is also $d$-regular for all $d \ge \delta$.
     \item \label{item:delta_regular_then_HiOX_vanish}
           $H^i(\ccO_X(d)) =0$ for $d \ge \delta-i$  and $i>0$.
     \item \label{item:delta_regular_then_H0_equals_Hilb_poly}
           If $h_{X}$ is the Hilbert polynomial of $X$,
           then $h^0(\ccO_{X}(d)) = h_{X}(d)$ for all $d \ge \delta-1$.
  \end{enumerate}
\end{proposition}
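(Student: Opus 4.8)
The plan is to observe that part (i) carries all the weight---it is the monotonicity half of Castelnuovo--Mumford regularity (Mumford's theorem)---whereas (ii) and (iii) follow formally once (i) is available. I would either cite Mumford for (i) or give the standard self-contained argument, which is an induction on $n := \dim \BP V$.

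For part (i): since $\CC$ is infinite, choose a hyperplane $H \subset \BP V$ whose linear equation $\ell \in V^*$ avoids the finitely many associated points of $\ccI_X$, so that $\ell$ is a non-zero-divisor on $\ccI_X$. Multiplication by $\ell$ then gives, for every twist $k$, the short exact sequence
\[
0 \to \ccI_X(k-1) \xrightarrow{\ \ell\ } \ccI_X(k) \to \ccF(k) \to 0, \qquad \ccF := \ccI_X \otimes \ccO_H,
\]
with $\ccF$ a coherent sheaf on $H \cong \BP^{n-1}$. Reading the long exact cohomology sequence at twist $\delta - i$, the group $H^i(\ccF(\delta-i))$ sits between $H^i(\ccI_X(\delta-i))$ and $H^{i+1}(\ccI_X(\delta-i-1))$, both zero by $\delta$-regularity of $\ccI_X$; hence $\ccF$ is $\delta$-regular on $\BP^{n-1}$ and, by the inductive hypothesis, $(\delta+j)$-regular for every $j \ge 0$. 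I would then prove $H^i(\ccI_X(\delta - i + j)) = 0$ for all $i > 0$ by a second induction on $j$: at twist $\delta - i + j$ the sequence sandwiches $H^i(\ccI_X(\delta - i + j))$ between $H^i(\ccI_X(\delta - i + j - 1))$, zero by the inductive hypothesis on $j$, and $H^i(\ccF(\delta - i + j))$, zero because $\ccF$ is $(\delta+j)$-regular. This is exactly $d$-regularity for $d = \delta + j \ge \delta$, with the base case $n=0$ trivial. The delicate points---and the main obstacle---are the choice of $H$ and the bookkeeping of this double induction (on $n$ and on $j$); everything else is formal.

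Given (i), part (ii) comes from twisting the structure sequence $0 \to \ccI_X \to \ccO_{\BP V} \to \ccO_X \to 0$ by $d$ and reading
\[
H^i(\ccO_{\BP V}(d)) \to H^i(\ccO_X(d)) \to H^{i+1}(\ccI_X(d)).
\]
When $i > \dim X$ the middle group vanishes by Grothendieck vanishing; since $X$ is a proper subscheme, $\dim X \le n-1$, so it remains to treat $0 < i \le n-1$. There the left-hand term $H^i(\ccO_{\BP V}(d))$ is zero for all $d$ (cohomology of $\BP^n$ in the middle degrees), and the right-hand term $H^{i+1}(\ccI_X(d))$ is zero for $d \ge \delta - i$ because $\ccI_X$ is $(d+i+1)$-regular by (i) (note $d + i + 1 > \delta$). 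Hence $H^i(\ccO_X(d)) = 0$.

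Finally, part (iii) is immediate from (ii): the Hilbert polynomial satisfies $h_X(d) = \chi(\ccO_X(d))$ for every $d$, and when $d \ge \delta - 1$ we have $d \ge \delta - i$ for all $i \ge 1$, so (ii) kills all higher cohomology of $\ccO_X(d)$; thus $\chi(\ccO_X(d)) = h^0(\ccO_X(d))$, giving $h^0(\ccO_X(d)) = h_X(d)$.
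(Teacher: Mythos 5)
Your proposal is correct and takes essentially the same route as the paper: the paper handles part (i) by citing \cite[Lem.~2.1(b)]{nitsure_construction_of_Hilb_and_Quot} (whose content is exactly the Mumford hyperplane-section double induction you spell out), proves (ii) from the long exact cohomology sequence of $0 \to \ccI_X(d) \to \ccO_{\BP V}(d) \to \ccO_X(d) \to 0$, and deduces (iii) from (ii) by viewing the Hilbert polynomial as an Euler characteristic, just as you do. The only difference is expository, namely that you reproduce the standard regularity-monotonicity argument instead of citing it.
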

\begin{proof}
   Part \ref{item:delta_regular_then_d_regular} is explained in \cite[Lem.~2.1(b)]{nitsure_construction_of_Hilb_and_Quot}
    with $\ccF = \ccI_X \subset \ccO_{\PP V}$.
   Part~\ref{item:delta_regular_then_HiOX_vanish} follows from the long exact cohomology sequence of
   \begin{equation} \label{equ_ideal_ring_short_exact_sequence}
       0 \to \ccI_X(d) \to \ccO_{\BP V} (d) \to \ccO_X(d) \to 0.
   \end{equation}
   Part~\ref{item:delta_regular_then_H0_equals_Hilb_poly} follows from \ref{item:delta_regular_then_HiOX_vanish}, keeping
in mind that the Hilbert polynomial is also an Euler characteristic.
\end{proof}

Gotzmann's regularity theorem gives a bound on how large $\delta$ must be for $\ccI_X$ to be $\delta$-regular.
This bound depends only on the Hilbert polynomial of $X$, which is
  essential for our purposes.

\begin{proposition}[Gotzmann's regularity, {\cite{MR0480478}}]\label{prop_Gotzmann_regularity}
           Suppose $P$ is the Hilbert polynomial of a subscheme $X \subset \BP V$.  Then
there exists a unique natural number $Got(P)$  such that
           \[
             P(d) = \sum_{i=1}^{Got(P)} \binom{d + a_i - i +1}{a_i}
           \]
           for some $a_1 \ge a_2 \ge .. \ge a_{Got(P)} \ge 0$.
           Moreover $\ccI_X$ is $Got(P)$-regular.
           In particular:
           \begin{itemize}
            \item  if $X' \subset \BP V$ is another scheme with the same Hilbert polynomial $P$,
                   then $\ccI_{X'}$ is also $Got(P)$-regular.
            \item  if $R \subset \BP V$ is a zero-dimensional scheme of degree $r$,
                   then $\ccI_R$ is $r$-regular.
           \end{itemize}
\end{proposition}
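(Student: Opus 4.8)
The plan is to read the assertion that $\ccI_X$ is $Got(P)$-regular as Gotzmann's classical regularity theorem and to organize its proof around three ingredients: generic initial ideals, Macaulay's growth bound, and Gotzmann's persistence theorem. The two ``in particular'' bullets will then follow formally, the first being immediate and the second a short computation of the Gotzmann number of a constant polynomial.

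First I would pass to a monomial ideal. Since the sheaf regularity of $\ccI_X$ in the sense defined above coincides with the Castelnuovo--Mumford regularity of the saturated homogeneous ideal $I(X) \subset \Sym(V^*)$, and since by the Bayer--Stillman theorem one has $\operatorname{reg}(I(X)) = \operatorname{reg}(\operatorname{gin}(I(X)))$ for the reverse-lexicographic generic initial ideal in generic coordinates, it suffices to bound the regularity of $J := \operatorname{gin}(I(X))$. The ideal $J$ is Borel-fixed (strongly stable, since we work over $\CC$) and shares the Hilbert function, hence the Hilbert polynomial $P$, of $I(X)$. For a Borel-fixed monomial ideal the Eliahou--Kervaire resolution shows that $\operatorname{reg}(J)$ equals the top degree of a minimal generator, so the problem reduces to the combinatorial claim that a Borel-fixed monomial ideal with Hilbert polynomial $P$ has no minimal generator in degree exceeding $Got(P)$.

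The combinatorial core is where I expect the real difficulty to lie. I would control the growth of the Hilbert function of $\Sym(V^*)/J$ by Macaulay's theorem, which bounds the increase from one degree to the next, and then invoke Gotzmann's persistence theorem: once the quotient grows maximally in a degree $d$ in which $J$ is already generated, it continues to grow maximally and $J$ acquires no further generators, so $\operatorname{reg}(J) \le d$. The point is that this stabilization is forced to occur by degree $Got(P)$. Comparing $J$ with the lex-segment ideal of the same Hilbert function --- which by Bigatti--Hulett--Pardue has the largest graded Betti numbers, hence the largest regularity, among all ideals with that Hilbert function --- reduces the claim to bounding the regularity of the saturated lex ideal with Hilbert polynomial $P$, and the binomial expansion $P(d) = \sum_{i=1}^{Got(P)} \binom{d+a_i-i+1}{a_i}$ is precisely the device that identifies this regularity as $Got(P)$. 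Carrying the sharp constant $Got(P)$ through the persistence argument and the binomial bookkeeping, rather than settling for a cruder bound polynomial in $P$, is the delicate part.

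Finally I would deduce the two consequences. The first is immediate, since $Got(P)$ depends only on $P$: any $X'$ with the same Hilbert polynomial inherits the same regularity bound. For the second, I would compute the Gotzmann number of the constant polynomial. If $R$ is zero-dimensional of degree $r$ then $P \equiv r$, and in the required expansion $r = \sum_{i=1}^{s} \binom{d+a_i-i+1}{a_i}$ with $a_1 \ge \dots \ge a_s \ge 0$ any exponent $a_i \ge 1$ would contribute a term nonconstant in $d$; hence all $a_i = 0$, each summand equals $1$, and $s = r$. Thus $Got(P) = r$, and the main theorem gives that $\ccI_R$ is $r$-regular.
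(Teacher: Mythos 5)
The paper does not actually prove Proposition~\ref{prop_Gotzmann_regularity}: it quotes Gotzmann's theorem from \cite{MR0480478} and points to \cite[Thm~4.3.2]{bruns_herzog_CM_rings} for an exposition, so your attempt must be judged against the standard literature proof. Your overall route (generic initial ideals, regularity of a strongly stable ideal equals its top generating degree, Macaulay growth plus Gotzmann persistence, comparison with lex ideals) is indeed that standard route, and your treatment of the two bullets is correct: the first is immediate since $Got(P)$ depends only on $P$, and your computation that the constant polynomial $P\equiv r$ forces all $a_i=0$ and $Got(P)=r$ is right.

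The gap is that your reduction drops the hypothesis of saturatedness, and without it the combinatorial claim you reduce to is false. Concretely, $J=(x,y^2,yz)\subset\CC[x,y,z]$ is Borel-fixed, and $\CC[x,y,z]/J$ has Hilbert function $1,2,1,1,\dotsc$, hence Hilbert polynomial $P\equiv 1$ and $Got(P)=1$; yet $J$ has minimal generators in degree $2$, so $\operatorname{reg}(J)=2>Got(P)$. (Of course $J$ is not saturated; its saturation $(x,y)$ is $1$-regular.) The proposition concerns ideal sheaves, i.e.\ saturated ideals, and saturation is exactly what your sketch never uses after the first sentence: one must record that $I(X)$ is saturated, that the reverse-lexicographic gin in generic coordinates of a saturated ideal is again saturated (Bayer--Stillman), and then prove the generation bound for \emph{saturated} Borel-fixed ideals, where saturatedness is precisely what forces the stabilization ``by degree $Got(P)$'' that you assert. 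The same slippage infects your lex-ideal step: the lex ideal with the same Hilbert function as a saturated ideal need not be saturated --- for $4$ general points in $\BP^2$ the lexification is $(x^2,xy,xz^2,y^4)$, whose saturation contains $x$ --- so Bigatti--Hulett--Pardue does not hand you ``the saturated lex ideal with Hilbert polynomial $P$''; what has to be bounded is the regularity of every lex ideal whose Hilbert function arises from some saturated ideal with Hilbert polynomial $P$. That bound, which you defer as delicate bookkeeping, is not bookkeeping: it is Gotzmann's theorem itself, and it cannot come out of the setup as you stated it, because the hypothesis that makes it true has been lost along the way.
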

The   number $Got(P)$  is called the {\it Gotzmann number} of $P$.
For an exposition of the proof,  see   \cite[Thm~4.3.2]{bruns_herzog_CM_rings} or \cite[Thm 3.11]{MR1648665}.

\begin{lemma}\label{lemma_dimension_of_linear_span}
  Suppose $X \subset \BP V$ is a  subscheme with Hilbert polynomial $h_X$ and $\ccI_X$ is $\delta$-regular.
  Then for $d\ge \delta -1$ and $d >0$:
  \[
     \dim \langle v_d(X) \rangle +1  = h^0(\ccO_X(d)) = h_X(d).
  \]
  In particular, if $R$ is a zero-dimensional scheme of degree $r$, and $d \ge r-1$
  then $\dim \langle v_d(R) \rangle = r-1$.
\end{lemma}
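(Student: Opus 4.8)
The plan is to convert the geometric assertion about the linear span into a cohomological dimension count and then feed in the regularity hypothesis. As recorded just before the lemma, $\langle v_d(X)\rangle = \BP({I_d(X)}^{\perp})$, where ${I_d(X)}^{\perp}\subset (S^d V^*)^* = S^d V$ is the annihilator of $I_d(X)\subset S^d V^*$; hence
\[
\dim\langle v_d(X)\rangle + 1 = \dim {I_d(X)}^{\perp} = \dim S^d V - \dim I_d(X).
\]
So it suffices to prove $\dim S^d V - \dim I_d(X) = h^0(\ccO_X(d))$, after which the second equality $h^0(\ccO_X(d)) = h_X(d)$ is exactly Proposition \ref{prop_delta_regular}\ref{item:delta_regular_then_H0_equals_Hilb_poly}.

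For the remaining equality I would take the long exact cohomology sequence of \eqref{equ_ideal_ring_short_exact_sequence}:
\[
0 \to H^0(\ccI_X(d)) \to H^0(\ccO_{\BP V}(d)) \to H^0(\ccO_X(d)) \to H^1(\ccI_X(d)) \to \cdots.
\]
By the standing notation $H^0(\ccI_X(d)) = I_d(X)$, and $H^0(\ccO_{\BP V}(d)) = S^d V^*$ with $\dim S^d V^* = \dim S^d V$. Thus the map $H^0(\ccO_{\BP V}(d)) \to H^0(\ccO_X(d))$ has kernel $I_d(X)$, and it is surjective precisely when $H^1(\ccI_X(d)) = 0$; granting that vanishing, the rank-nullity count gives $h^0(\ccO_X(d)) = \dim S^d V - \dim I_d(X)$, as wanted.

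The one substantive step is the vanishing $H^1(\ccI_X(d)) = 0$, and it is also the only place the bound $d\ge\delta-1$ is used. Since $\ccI_X$ is $\delta$-regular, Proposition \ref{prop_delta_regular}\ref{item:delta_regular_then_d_regular} shows $X$ is $(d+1)$-regular as soon as $d+1\ge\delta$, i.e.\ $d\ge\delta-1$; by the definition of $(d+1)$-regularity this gives $H^1(\ccI_X((d+1)-1)) = H^1(\ccI_X(d)) = 0$. The point to be careful about is exactly getting the sharp threshold $d\ge\delta-1$ rather than $d\ge\delta$, which is why I read the $H^1$-vanishing off of $(d+1)$-regularity rather than off $d$-regularity directly.

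Finally, for the ``in particular'' clause I would invoke the second bullet of Proposition \ref{prop_Gotzmann_regularity}: a zero-dimensional scheme $R$ of degree $r$ has $r$-regular ideal sheaf, so one may take $\delta = r$, and its Hilbert polynomial is the constant $h_R \equiv r$. Applying the first part of the lemma with this $\delta$ yields, for every $d \ge r-1 = \delta-1$, the equality $\dim\langle v_d(R)\rangle + 1 = h_R(d) = r$, that is $\dim\langle v_d(R)\rangle = r-1$.
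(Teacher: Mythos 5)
Your proof is correct and follows essentially the same route as the paper's: identify the codimension of $\langle v_d(X)\rangle$ with $\dim I_d(X)$, use the cohomology sequence of \eqref{equ_ideal_ring_short_exact_sequence} together with regularity to get $H^1(\ccI_X(d))=0$, and then invoke Proposition \ref{prop_delta_regular}\ref{item:delta_regular_then_H0_equals_Hilb_poly}. The only difference is that you spell out the sharp threshold $d\ge\delta-1$ via $(d+1)$-regularity, a detail the paper compresses into ``all the higher cohomologies vanish.''
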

\begin{proof}
  Since all the higher cohomologies vanish,
    the short exact sequence \eqref{equ_ideal_ring_short_exact_sequence}
    gives rise to a short exact sequence of sections.
  The codimension of $\langle v_d(X) \rangle$ in $\PP(S^d V)$
    is equal to $h^0 (I_d(X))$.
  Thus $\dim \langle v_d(X) \rangle+1 = h^0(\ccO_{\PP V}(d)) - h^0 (I_d(X)) = h^0(\ccO_X(d))$
    and the claim follows.
  The $+1$ is just the difference between projective and vector space dimensions.
\end{proof}


The following lemma is elementary, but we include a complete proof for the benefit of readers whose main background is outside of algebraic geometry.

\begin{lemma}[Additivity of Hilbert polynomials]\label{lemma_additivity_of_Hilbert_polynomials}
   Suppose $X, R \subset \BP V$ are two subschemes.
   Suppose $h_{X}$, $h_{R}$, $h_{X\cap R}$ and $h_{X\cup R}$ are respectively the  Hilbert polynomials
   of ${X}$, ${R}$, ${X\cap R}$ and ${X\cup R}$. Then:
   \[
      h_{X \cap R} = h_{X} + h_{R} - h_{X \cup R}.
   \]
   If in addition $R$ is zero-dimensional,
     then $X \cup R$ is $(Got(h_X)+t)$-regular where $t = \deg R - \deg (X \cap R)$.
\end{lemma}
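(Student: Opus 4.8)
The plan is to split the lemma into its two assertions and treat them in turn. For the additivity formula the first step is to set up the Mayer--Vietoris short exact sequence of structure sheaves on $\BP V$,
\[
 0 \to \ccO_{X\cup R} \to \ccO_X \oplus \ccO_R \to \ccO_{X\cap R} \to 0,
\]
which rests on the scheme-theoretic identities $\ccI_{X\cup R} = \ccI_X \cap \ccI_R$ and $\ccI_{X\cap R} = \ccI_X + \ccI_R$. The first map is the diagonal restriction $a \mapsto (a,a)$ and the second is the difference $(a,b) \mapsto a-b$; exactness is a direct diagram chase (injectivity uses $\ccI_X \cap \ccI_R$, and $\ker \subseteq \im$ in the middle uses that any $a-b \in \ccI_X+\ccI_R$ can be corrected to a common lift). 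Twisting by $\ccO_{\BP V}(d)$ and using additivity of the Euler characteristic over short exact sequences and over the direct sum, I obtain
\[
 \chi(\ccO_{X\cap R}(d)) = \chi(\ccO_X(d)) + \chi(\ccO_R(d)) - \chi(\ccO_{X\cup R}(d)).
\]
Since each Hilbert polynomial is the corresponding twisted Euler characteristic, this is exactly the claimed identity $h_{X\cap R} = h_X + h_R - h_{X\cup R}$.

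For the regularity assertion I would first note that $R$ zero-dimensional forces the closed subscheme $X\cap R \subseteq R$ to be zero-dimensional as well, so $h_R$ and $h_{X\cap R}$ are the constant polynomials $\deg R$ and $\deg(X\cap R)$, with $\deg(X\cap R) \le \deg R$ as a subscheme (a quotient of lengths), whence $t = \deg R - \deg(X\cap R) \ge 0$ is a nonnegative integer. Substituting into the additivity formula collapses everything to $h_{X\cup R} = h_X + t$. By Gotzmann's regularity (Proposition~\ref{prop_Gotzmann_regularity}), $\ccI_{X\cup R}$ is $Got(h_{X\cup R})$-regular, so the task reduces to computing the Gotzmann number $Got(h_X+t)$.

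The key remaining step is to show $Got(h_X + t) = Got(h_X) + t$, and here I would argue directly from the uniqueness of the Macaulay--Gotzmann representation rather than treating $Got$ as a black box. Writing $h_X(d) = \sum_{i=1}^{g}\binom{d + a_i - i + 1}{a_i}$ with $g = Got(h_X)$ and $a_1 \ge \cdots \ge a_g \ge 0$, I append $t$ further indices with $a_{g+1}=\cdots=a_{g+t}=0$. Each new term equals $\binom{d-i+1}{0}=1$, so the extended sum equals $h_X(d)+t$; the extended sequence remains non-increasing and nonnegative precisely because $a_g \ge 0$. By uniqueness of the representation, this is the Gotzmann representation of $h_X+t$, so $Got(h_X+t) = g+t$. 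Combining the three steps, $Got(h_{X\cup R}) = Got(h_X)+t$ and hence $\ccI_{X\cup R}$ is $(Got(h_X)+t)$-regular.

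The part I expect to require the most care is this last Gotzmann-number computation: one must invoke the precise uniqueness of the Macaulay representation and check that the monotonicity constraint is preserved when appending the constant terms, which is exactly where the hypothesis $t \ge 0$ enters. By contrast, the sheaf sequence and the Euler-characteristic additivity are routine once the ideal-sheaf identities $\ccI_{X\cup R} = \ccI_X \cap \ccI_R$ and $\ccI_{X\cap R} = \ccI_X + \ccI_R$ are recorded.
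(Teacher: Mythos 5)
Your proof is correct and follows essentially the same route as the paper: the Mayer--Vietoris sequence $0 \to \ccO_{X\cup R}(d) \to \ccO_{X}(d)\oplus\ccO_{R}(d) \to \ccO_{X\cap R}(d) \to 0$ together with additivity of cohomology/Euler characteristics gives the polynomial identity, and Gotzmann regularity gives the second claim. The only difference is one of detail, not of method: you spell out what the paper compresses into ``the second claim follows by Gotzmann regularity,'' namely that $h_{X\cup R}=h_X+t$ and that $Got(h_X+t)=Got(h_X)+t$, the latter by appending $t$ terms with $a_i=0$ to the Macaulay--Gotzmann representation and invoking its uniqueness.
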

\begin{proof}
   For $d$ sufficiently large, all the higher cohomologies vanish in the following
   short exact sequence:
   \[
     0 \to \ccO_{X \cup R}(d) \to \ccO_{X} (d) \oplus \ccO_{R}(d) \to  \ccO_{X \cap R}(d) \to 0.
   \]
   The additivity claim follows.

   To see the second claim,
   let
   \[
      h_{X}(d) = \sum_{i=1}^{Got(h_X)} \binom{d + a_i - i +1}{a_i}
   \]
   as in Proposition~\ref{prop_Gotzmann_regularity}.
   Set $a_{Got(h_X)+1} = \dotsb =  a_{Got(h_X)+t} = 0$ and note:
   \begin{align*}
      h_{X \cup R}(d)  =  h_X(d) + t
                      & = \sum_{i=1}^{Got(h_X) \phantom{+t}} \binom{d + a_i - i +1}{a_i} + t \cdot \binom{d + 0 - i +1}{0}\\
                      & = \sum_{i=1}^{Got(h_X)+t} \binom{d + a_i - i +1}{a_i}.
   \end{align*}
   Thus by uniqueness of $Got(h_{X \cup R})$ in Proposition~\ref{prop_Gotzmann_regularity},
     we must have $Got(h_{X \cup R}) = Got(h_X)+t$.
\end{proof}

For a projective  reduced scheme  $X$,
  let $\ccH_r(X)$ denote the union of all the irreducible components of
the Hilbert scheme $Hilb_r(X)$ of degree $r$ dimension $0$ subschemes of $X$,
which contain $r$ distinct points.
In case $X$ is  a variety,  $\ccH_r(X)$ is irreducible too.
Also if $Y\subset X$, then $\ccH_r(Y) \subset \ccH_r(X)$.
Schemes that are in $\ccH_r(X)$ are called \emph{smoothable in $X$}
 (because there exists a flat irreducible deformation to a smooth scheme).
It is an interesting and non-trivial problem to determine when $Hilb_r(X) = \ccH_r(X)$,
and to identify the schemes that are in $\ccH_r(X)$ if the equality does not hold---
see, e.g.,  \cite{cartwright_erman_velasco_viray_Hilb8}, \cite{erman_velasco_syzygetic_smoothability}
and references therein.

\begin{proposition}\label{prop_smothable_in_X_iff_smoothable_in_Y}
  Suppose $R$ is a zero-dimensional scheme of finite length $r$  and $X$ and $Y$ are two smooth projective varieties.
  If $R$ can be embedded in $X$ and in $Y$, then $R$ is smoothable in $X$ if and only if $R$ is smoothable in $Y$.
\end{proposition}
See \cite[Prop.~2.1]{BuBu}. Alternatively, analogous statements are
   \cite[Lem.~2.2]{casnati_notari_irreducibility_Gorenstein_degree_9},
   or \cite[Lem.~4.1]{cartwright_erman_velasco_viray_Hilb8},
   or \cite[p.4]{artin_deform_of_sings}
   or \cite[p.2]{erman_velasco_syzygetic_smoothability}.

The smoothable component $\ccH_r(X)$ is relevant to our study because of its relation
to secant varieties:

\begin{lemma}\label{drpoints}
        Let  $X \subset \PP V$ be a  reduced scheme  that is not a set of less than  $  r$ points,  and let $d \ge r-1$.
        Let $p \in \PP S^d V$.
        Then  $p \in \sigma_r(v_d (X))$
        if and only if there exists a scheme $R \in \ccH_r (X)$
        such that $p \in \langle v_d(R) \rangle$.
\end{lemma}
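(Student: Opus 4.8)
The plan is to prove Lemma~\ref{drpoints} by relating membership in $\sigma_r(v_d(X))$ to the existence of a spanning smoothable scheme, using the Veronese embedding to convert the span of $r$ points into a linear-algebraic condition. The key reduction is that, because $d \ge r-1$, any $0$-dimensional scheme $R$ of degree $\le r$ is $r$-regular (Proposition~\ref{prop_Gotzmann_regularity}), hence $d$-regular, so by Lemma~\ref{lemma_dimension_of_linear_span} the span $\langle v_d(R)\rangle$ has the ``expected'' dimension $\deg R - 1$ and the points $v_d(R)$ impose independent conditions. This is the feature that makes limits of spans behave well.

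First I would prove the easy direction. If $R = \{p_1,\dots,p_r\}$ consists of $r$ distinct points of $X$, then $R \in \ccH_r(X)$ trivially and $\langle v_d(R)\rangle = \langle v_d(p_1),\dots,v_d(p_r)\rangle$, so any $p$ in this span lies in $\sigma_r(v_d(X))$ by definition of the secant variety. Conversely, the generic point of $\sigma_r(v_d(X))$ lies on a span of $r$ genuine points of $v_d(X)$, giving the reduced $R$; the content is in the limiting points. So the substance of both directions is the passage to the Zariski closure, and I would handle this via the incidence variety
\[
  \ccI := \overline{\{(R,p) : R \in \ccH_r(X),\ p \in \langle v_d(R)\rangle\}} \subset \ccH_r(X) \times \BP S^d V,
\]
where I regard the fibered span as a projective subbundle. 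Because every $R$ appearing has $\langle v_d(R)\rangle$ of the constant expected dimension $\le r-1$ (by the regularity remark above), the spans fit together into a flat family over the open locus of reduced schemes, and I would argue that this family extends to a projective bundle of $\BP^{r-1}$'s over all of $\ccH_r(X)$. Projecting $\ccI$ to $\BP S^d V$ then has image exactly $\{p : \exists R\in\ccH_r(X),\ p\in\langle v_d(R)\rangle\}$, and I must show this image coincides with $\sigma_r(v_d(X))$.

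The containment ``image $\subseteq \sigma_r(v_d(X))$'' follows because $\ccH_r(X)$ is, by definition, the closure of the locus of $r$ distinct points, and on that dense locus the span lies in $\sigma_r(v_d(X))$; since $\sigma_r(v_d(X))$ is closed and the projection of $\ccI$ is closed (properness of $\ccH_r(X)$), the whole image lands in $\sigma_r(v_d(X))$. For the reverse containment, I would take $p \in \sigma_r(v_d(X))$ and write it as a limit $p = \lim_t p(t)$ with $p(t) \in \langle v_d(p_1(t)),\dots,v_d(p_r(t))\rangle$ for distinct $p_i(t)\in X$; the $r$-tuples give points of $\ccH_r(X)$ over the dense locus, and by properness of $\ccH_r(X)$ (it is a closed subscheme of the projective Hilbert scheme) the limiting scheme $R_0 \in \ccH_r(X)$ exists, while the constancy of span-dimension guarantees that the limit of spans is $\langle v_d(R_0)\rangle$ and contains $p$.

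The main obstacle is precisely this last point: controlling the limit of the spans. A priori the limit in the Grassmannian of the $(r-1)$-planes $\langle v_d(p_1(t)),\dots,v_d(p_r(t))\rangle$ need only \emph{contain} $\langle v_d(R_0)\rangle$, and could jump to something larger if the points were to impose dependent conditions in the limit. The crucial input that rules this out is that $d \ge r-1$ forces $v_d(R_0)$ to span a space of the full expected dimension $\deg R_0 - 1 = r-1$ (Lemma~\ref{lemma_dimension_of_linear_span} applied with the $r$-regularity from Proposition~\ref{prop_Gotzmann_regularity}), so the limiting span has the same dimension as the nearby spans and must therefore equal $\langle v_d(R_0)\rangle$ exactly. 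Identifying the flat limit of the span with $\langle v_d(R_0)\rangle$ (rather than merely a plane containing it) is where I expect to spend the most care, and it is exactly where the hypothesis $d \ge r-1$ is used.
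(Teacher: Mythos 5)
Your proof is correct, and it is in substance a self-contained expansion of what the paper handles by citation: the paper's proof of Lemma~\ref{drpoints} consists of exactly your key observation --- that $d \ge r-1$ together with Gotzmann regularity (Proposition~\ref{prop_Gotzmann_regularity}) and Lemma~\ref{lemma_dimension_of_linear_span} forces $\dim \langle v_d(R) \rangle = \deg R - 1$ for every zero-dimensional scheme of degree at most $r$ --- followed by an appeal to \cite[Prop.~2.8]{BGI}, which is precisely the limit-of-spans statement you prove by hand. What your route buys is independence from that reference: the incidence correspondence over $\ccH_r(X)$ (a $\BP^{r-1}$-bundle, justified by constancy of $h^0(\ccI_R(d))$ and cohomology-and-base-change, using $r$-regularity), properness giving a closed image, and the dimension count showing the limit plane cannot jump --- the limit plane $L_0$ contains $\langle v_d(R_0) \rangle$ and both have dimension $r-1$, hence they are equal. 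What the paper's route buys is brevity. Two small points you should make explicit in a final write-up: first, the hypothesis that $X$ is not a set of fewer than $r$ points is what allows you to enlarge any set of fewer than $r$ distinct points to exactly $r$ distinct points of $X$, so that spans of $r$-tuples of distinct points are dense in $\sigma_r(v_d(X))$ as defined; second, the containment $v_d(R_0) \subset L_0$ of the flat limit inside the limit plane (whence $\langle v_d(R_0) \rangle \subseteq L_0$), which holds because containment of the family of schemes in the family of planes is a closed condition. Neither is a gap --- you flag the second yourself as the place requiring care --- but they are the steps a referee would ask you to spell out.
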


\begin{proof}
    By Lemma~\ref{lemma_dimension_of_linear_span},
     the $d$-th Veronese re-embedding
     of any zero dimensional scheme $R$ of degree at most $r$,
     $v_d(R)$   will span a $(r-1)$-dimensional linear (projective) subspace.
   With this in mind, the  claim becomes \cite[Prop.~11]{BGI}.
     See also \cite[Prop.~2.7]{BuBu}.
\end{proof}

\subsection{Proofs of the  main lemma and the uniqueness theorem} \label{sect_reduction_to_PN}

Fix an integer $r$ and a subscheme $X \subset \BP V$.
Let $d_0 = Got(h_X)+r-1$.
Then,
 if $R \subset \BP V$ is a zero-dimensional scheme of degree at most $r$,
 $\ccI_X$, $\ccI_R$, $\ccI_{X \cap R}$ and $\ccI_{X \cup R}$   are $(d_0+1)$-regular
       by Propositions~\ref{prop_delta_regular}\ref{item:delta_regular_then_d_regular},
      \ref{prop_Gotzmann_regularity}
       and Lemma~\ref{lemma_additivity_of_Hilbert_polynomials}.

Recall that   Lemma~\ref{lemma_p_is_in_span_of_intersection} states  that for $d \ge d_0$
  the following equality of linear spans holds:
\[
  \langle v_d(R) \rangle \cap \langle  v_d(X) \rangle = \langle v_d (R \cap X) \rangle .
\]

\begin{proof}[Proof of Lemma~\ref{lemma_p_is_in_span_of_intersection}]
  Let $d\ge d_0$, let $R \subset \BP V$ be a subscheme of degree at most $r$.
  Since $\langle v_d (X \cap R) \rangle \subseteq   \langle v_d(X) \rangle \cap \langle v_d(R) \rangle$   trivially holds,  to prove equality,  it is enough to prove that the dimension of the left hand side
  equals   the dimension of the right hand side.

  Since  $\ccI_X$, $\ccI_R$, $\ccI_{X \cap R}$ and $\ccI_{X \cup R}$ are $d+1$-regular,
  it follows from Lemmas~\ref{lemma_dimension_of_linear_span} and \ref{lemma_additivity_of_Hilbert_polynomials} that
  \begin{align*}
    \dim \langle v_d (X \cap R) \rangle
      & = h_{X \cap \scheme}(d) - 1
        = h_{X}(d) + h_{R}(d) - h_{X \cup R}(d) -1  \\
      & = (\dim \langle v_d(X) \rangle +1) + (\dim \langle v_d(R) \rangle +1)
           -  (\dim \langle v_d (X \cup R) \rangle  +1) -1 \\
      & =  \dim \langle v_d(X) \rangle  + \dim \langle v_d(R) \rangle
           -  \dim  \langle v_d ( X) \cup v_d  (R) \rangle  \\
      & = \dim (\langle  v_d(X) \rangle \cap \langle v_d(R) \rangle).
  \end{align*}
\end{proof}

The following is a consequence of Lemma~\ref{lemma_p_is_in_span_of_intersection} (with $X=Q$):

\begin{corollary}\label{cor_R_subset_Q}
  Suppose $p \in \BP S^d V$ and that $R,Q  \subset \BP V$ are two zero-dimensional   schemes,
     such that $p \in \langle v_d(R) \rangle$ and $p \in \langle v_d (Q) \rangle$ for some $d \ge \deg (R \cup Q) - 1$.
  Suppose furthermore, that $R$ is minimal in the following sense:
     for any $R' \subsetneqq R$ we have $p \notin \langle v_d(R ') \rangle$.
  Then $R \subset Q$.
\end{corollary}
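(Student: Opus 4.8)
The plan is to reduce the statement to the Main Lemma (Lemma~\ref{lemma_p_is_in_span_of_intersection}) applied with $X = Q$, which yields the identity of linear spans $\langle v_d(R)\rangle \cap \langle v_d(Q)\rangle = \langle v_d(R\cap Q)\rangle$, and then to play this identity against the minimality hypothesis on $R$.

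First I would verify that the hypothesis $d\ge \deg(R\cup Q)-1$ is enough to run the dimension count underlying Lemma~\ref{lemma_p_is_in_span_of_intersection}. Each of the four zero-dimensional schemes $R$, $Q$, $R\cap Q$ and $R\cup Q$ is contained in $R\cup Q$, and hence has degree at most $\deg(R\cup Q)$; by Gotzmann's regularity (Proposition~\ref{prop_Gotzmann_regularity}) each is therefore $\deg(R\cup Q)$-regular. Consequently, for $d\ge \deg(R\cup Q)-1$, Lemma~\ref{lemma_dimension_of_linear_span} computes the dimension of each of the four spans $\langle v_d(\cdot)\rangle$ as the corresponding Hilbert polynomial evaluated at $d$, minus one. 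With these four regularities in hand, the computation in the proof of Lemma~\ref{lemma_p_is_in_span_of_intersection}---combining Lemma~\ref{lemma_dimension_of_linear_span} with the additivity of Hilbert polynomials (Lemma~\ref{lemma_additivity_of_Hilbert_polynomials})---goes through verbatim and gives $\langle v_d(R)\rangle \cap \langle v_d(Q)\rangle = \langle v_d(R\cap Q)\rangle$.

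The remainder is short. Since by hypothesis $p\in \langle v_d(R)\rangle$ and $p\in\langle v_d(Q)\rangle$, the identity above places $p\in \langle v_d(R\cap Q)\rangle$. Now $R\cap Q$ is a closed subscheme of $R$. If it were a proper subscheme, i.e.\ $R\cap Q\subsetneqq R$, then the minimality of $R$ would force $p\notin \langle v_d(R\cap Q)\rangle$, a contradiction. Hence $R\cap Q = R$ as schemes, which is exactly the desired inclusion $R\subseteq Q$.

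The one point requiring care---and the only genuine obstacle---is the regularity bookkeeping in the first step: a naive citation of Lemma~\ref{lemma_p_is_in_span_of_intersection} with $X=Q$ would demand $d\ge \deg R-1+\deg Q$, which is strictly stronger than the stated hypothesis $d\ge\deg(R\cup Q)-1$ whenever $R$ and $Q$ share points. The resolution is to observe that the span-dimension argument only ever needs the regularity of the four schemes $R$, $Q$, $R\cap Q$, $R\cup Q$ at degree $d$, and the most demanding of these is $R\cup Q$, whose Gotzmann regularity is precisely $\deg(R\cup Q)$; thus the sharper hypothesis suffices, and the proof never needs the weaker degree bound.
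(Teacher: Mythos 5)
Your proof is correct and takes essentially the same route as the paper's: apply Lemma~\ref{lemma_p_is_in_span_of_intersection} with $X=Q$ to place $p\in\langle v_d(R\cap Q)\rangle$, then use the minimality of $R$ to force $R\cap Q=R$, i.e.\ $R\subset Q$. The regularity bookkeeping you spell out---that the argument only needs $d$-regularity of the four schemes $R$, $Q$, $R\cap Q$, $R\cup Q$, the worst case being the Gotzmann number $\deg(R\cup Q)$ of $R\cup Q$---is exactly what the paper leaves implicit when it invokes the lemma ``with $d_0=\deg(R\cup Q)-1$''.
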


\begin{proof}
  Apply Lemma~\ref{lemma_p_is_in_span_of_intersection} with $X=Q$, and $d_0 = \deg (R \cup Q) -1$.
  Thus $p \in \langle v_d (R \cap Q)\rangle$, and by the assumption that $R$ is minimal,
    $R \cap Q =R$.
\end{proof}

\begin{proof}[Proof of Theorem~\ref{uniquenesscora}]
Suppose $p \in \PP S^d V$ is such that $r:= R_{v_d(\PP V)}(p) \le \frac{d+1}{2}$.
Thus there exists a zero dimensional smooth scheme $R \subset \PP V$,
   which is a union of $r$ distinct reduced points,
   such that $p \in \langle v_d (R)\rangle$.
Note that $R$ is minimal in the sense  of   Corollary~\ref{cor_R_subset_Q}.
Let $Q \subset \PP V$ be any other zero-dimensional subscheme such that $p \in \langle v_d (R)\rangle$ and
   $\deg Q \le r$.
    Then  $Q=R$ by Corollary~\ref{cor_R_subset_Q}.

The first claim of the theorem is that $\uR_{v_d(\PP V)}(p) = r$.
Were the border rank smaller than $r$, by Lemma~\ref{drpoints}
   there would exist a  smoothable zero dimensional scheme $Q \subset \PP V$ of length less than $r$,
   such that $p \in \langle v_d (Q)\rangle$, a contradiction.
The second claim of the theorem is that $R$ is unique, which  was proved
 above.
\end{proof}

\subsection{Proof of Theorem~\ref{item_case_smooth_r}}

We start by introducing the following notation.

\begin{notation}\label{def_Sigma_rdX}
  Given a  reduced scheme  $X\subset \BP V$, let
  $$
   \srdx:= \reduced{\bigl(\sigma_r(v_d (\BP V)) \cap \langle v_d(X) \rangle\bigr)}
  $$
  where $\reduced{( \cdot)}$ denotes the reduced subscheme.
\end{notation}

Theorem~\ref{item_case_smooth_r}
states that $\srdx = \sigma_r(v_d (X))$ for $d \ge d_0 = Got(h_X) +r-1$.
 First we reduce the  theorem   to Lemma~\ref{lemma_existence_of_Q} below,
   and then we discuss methods necessary to prove the lemma.

\begin{proof}[Proof of Theorem~\ref{item_case_smooth_r}]
  Suppose $X$ is smooth and
  $ p \in \srdx$,
  so that by Lemma~\ref{drpoints} there exists a zero-dimensional smoothable subscheme $R \subset \PP V$ of degree at most $r$,
  such that $p \in  \langle v_d(X) \rangle \cap \langle v_d(R) \rangle$.
  Lemma~\ref{lemma_p_is_in_span_of_intersection} implies $p \in \langle v_d (X \cap R) \rangle$.
  Since smoothability of a zero-dimensional scheme is a local property,
  the components of $R$ which have support away from $X$ are redundant, in the sense that we can replace $R$
  with the union of only those components of $R$  that have support on $X$.
  Thus, without loss of generality, assume $\reduced{R} \subset X$ and also $\deg R =r$ for simplicity of notation.

  Note that if $R$ is not reduced, then this does not   necessarily  imply that $R\subset X$.
  In fact, it is possible to construct smoothable $R$ such that $X \cap R$ is not smoothable.
  We outline how to construct such an example in a separate note \cite{jabu_example_to_BGL},
     as it is not necessary for the content of this paper.
   Instead    we will construct another smoothable scheme $Q$, such that $X \cap R \subset Q \subset X$ and
    $\deg Q = \deg R$.
  In general $Q$ is not necessarily isomorphic to $R$ as an abstract scheme
    (for instance, $Q$ might have smaller embedding dimension than $R$).
 The existence of $Q$  will follow    from Lemma~\ref{lemma_existence_of_Q} below.
  Thus $p\in \langle v_d (Q) \rangle$ and by Lemma~\ref{drpoints},  $p \in \sigma_r (v_d( X))$ as claimed.
  The other inclusion $\sigma_r (v_d (X) )\subset \srdx$ always holds.
\end{proof}

\begin{lemma}\label{lemma_existence_of_Q}
   Suppose $X \subset \PP V$ is a smooth subvariety and $R \subset \PP V$ is a smoothable zero dimensional subscheme of degree $r$, whose support is contained in $X$.
   Then there exists a zero dimensional smoothable subscheme $Q \subset X$ of degree $r$ containing $R \cap X$.
\end{lemma}

To prove the lemma we will use some elementary analytic methods.
It might be possible to avoid the analytic methods by using formal neighbourhoods instead,
   however there is one missing ingredient which we were not able to find references for
   (see Question~\ref{question_smoothable} below).

\begin{notation}
Let $D \subset \CC$ denote be a small open analytic disk.
Also let  $\widehat{D} := \Spec \CC[[t]]$
     and $\widehat{D}^{\bullet} := \Spec \CC[[t]][t^{-1}]$
     (which is the spectrum of the field of fractions of $\CC [[t]]$).
\end{notation}
A reader with more differential-geometric background may wish to think of $\Spec \CC [[t]]$
    as of a sufficiently small (or infinitesimally small) analytic disk in $\CC$ around $0$ (the disk may get smaller during the arguments)
    with coordinate $t$,
    and $\widehat{D}^{\bullet}  \subset \widehat{D}$ should be thought of as  an
      (infinitesimally small) punctured disk $D \setminus \set{0}$.
 Formally,   as set $\widehat{D}$ consists of two points:
     the closed point $0 \in \widehat{D}$ corresponding to the maximal ideal $(t) \subset \CC [[t]]$,
     and the generic point $\widehat{D}^{\bullet}$ corresponding to the ideal $(0)$.

In  the  algebraic category we have the following lemma,
  which gives a simple criterion for flatness in the case we are interested in.
The lemma is a slight rephrase of \cite[Prop.~III.9.8]{MR0463157}.
In our case $C$ in the lemma will be either a smooth quasiprojective curve or $\widehat{D}$.

\begin{lemma}\label{lemma_criterion_for_flatness}
   Let $C$ be a regular integral scheme of dimension $1$,
      and let $c\in C$ be a closed point.
   Denote by $C^{\bullet}:= C \setminus \set{c}$.
   Let $\ccR \subset \PP V \times C$  be a closed subscheme.
   Suppose for each (not necessarily closed) point $t \in C \setminus \set{c}$
       the  fibre $\ccR_t$ over $t$
       is reduced and consists of $r$ distinct $t$-points.
   Let $R \subset \PP V$ be the scheme such that the special fibre $\ccR_c$ over $c \in C$
       is equal to $R \times \set{0}$.
   In addition let $\tilde{\ccR}:=\overline{\ccR \setminus \ccR_c} \subset \PP V \times C$,
       that is $\tilde{\ccR}$ is the smallest reduced closed subscheme of $\PP V \times C$
       containing $\ccR \setminus \ccR_c$.
   Then:
   \begin{enumerate}
     \item \label{item_tilde_ccR_is_flat}
           $\tilde{\ccR} \to $C is flat;
     \item \label{item_ccR_flat_iff_deg_is_r}
           $\ccR \to C$ is flat, if and only if $\dim R =0$ and $\deg R = r$,
             if and only if $\ccR = \tilde{\ccR}$;
   \end{enumerate}
\end{lemma}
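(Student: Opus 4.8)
The plan is to prove the two equivalences in Lemma~\ref{lemma_criterion_for_flatness}\eqref{item_ccR_flat_iff_deg_is_r} by exploiting the flatness criterion for families over a smooth curve (equivalently, a disk) together with part~\eqref{item_tilde_ccR_is_flat}, which we may assume. The key fact I would invoke is that a family over a one-dimensional regular base is flat if and only if no associated point of the total space maps to a point of the base, which for our situation means exactly that $\ccR$ has no embedded or isolated component supported in the special fibre $\ccR_0$. Concretely, over the smooth curve $D$, flatness of $\ccR\to D$ is equivalent to the statement that every associated prime of $\ccR$ dominates $D$.

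First I would record the unconditional inclusion $\tilde{\ccR}\subset \ccR$ coming from the fact that $\ccR$ is closed and contains $\ccR|_{D^\bullet}$, while $\tilde{\ccR}$ is the closure of the latter; moreover both restrict to the same reduced family $r$ distinct points over $D^\bullet$. Since $\tilde{\ccR}\to D$ is flat by~\eqref{item_tilde_ccR_is_flat} and its generic fibre has degree $r$, by constancy of the Hilbert polynomial in flat families its special fibre $\tilde{\ccR}_0$ also has degree $r$ and is zero-dimensional. Next I would show the chain of implications. If $\ccR\to D$ is flat, then $\ccR_0=R\times\{0\}$ has the same Hilbert polynomial as the generic fibre, hence $\dim R=0$ and $\deg R=r$; this gives the first implication. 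Conversely, if $\dim R=0$ and $\deg R=r$, then $\ccR_0$ and $\tilde{\ccR}_0$ are both zero-dimensional of degree $r$, and since $\tilde{\ccR}\subset\ccR$ induces a surjection $\ccO_{\ccR_0}\twoheadrightarrow\ccO_{\tilde{\ccR}_0}$ of Artinian rings of the same length $r$, the surjection is an isomorphism on the special fibre; combined with equality over $D^\bullet$ and the fact that $\tilde\ccR$ is already the closure, I would conclude $\ccR=\tilde{\ccR}$. Finally, $\ccR=\tilde{\ccR}$ together with~\eqref{item_tilde_ccR_is_flat} immediately yields flatness of $\ccR\to D$, closing the cycle of equivalences.

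The main obstacle I anticipate is the converse direction, namely deducing the scheme equality $\ccR=\tilde{\ccR}$ (not merely equality of their special fibres) from the numerical hypothesis $\deg R=r$. The subtlety is that $\ccR$ could a priori carry an embedded component supported on the special fibre that does not increase the degree count in the naive way, or the total space $\ccR$ could fail to be reduced along $\ccR_0$ even though the fibre has the right length. I would handle this by passing to local rings at the associated points: flatness over the discrete valuation ring $\ccO_{D,0}$ is equivalent to the local rings of $\ccR$ being torsion-free over $\ccO_{D,0}$, i.e.\ having no $\mathfrak{m}_0$-torsion. The length computation $\deg R=r=\deg\tilde{\ccR}_0$ forces the comparison map $\ccO_{\ccR}\to\ccO_{\tilde{\ccR}}$ to be an isomorphism after restricting to $D^\bullet$ and on the central fibre, and a standard Nakayama/torsion-free argument over the DVR then upgrades this to an isomorphism of the local rings, eliminating any spurious torsion or embedded structure. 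Once torsion-freeness is established, flatness over the DVR is automatic, completing the argument.
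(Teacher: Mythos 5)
Your treatment of part \ref{item_ccR_flat_iff_deg_is_r} is correct, and it is in substance the same argument as the paper's: the paper disposes of the whole lemma by citing \cite[Prop.~III.9.8]{MR0463157} ``and its proof,'' and that proof is exactly the mechanism you describe (flatness over the discrete valuation ring $\ccO_{D,0}$ is torsion-freeness, and the unique flat extension of $\ccR|_{D^{\bullet}}$ is its scheme-theoretic closure). Your cycle of implications is valid: flatness gives $\dim R=0$ and $\deg R=r$ by constancy of the Hilbert polynomial; the degree hypothesis gives $\ccR_0=\tilde{\ccR}_0$ by the length count, and then $\ccR=\tilde{\ccR}$; and $\ccR=\tilde{\ccR}$ gives flatness by part \ref{item_tilde_ccR_is_flat}. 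You are also right that the middle implication is the delicate point, and your resolution is the correct one: the ideal sheaf $I=\ker(\ccO_{\ccR}\to\ccO_{\tilde{\ccR}})$ is $t$-power torsion since the two schemes agree over $D^{\bullet}$; the isomorphism of special fibres forces $I\subset t\,\ccO_{\ccR}$, hence $I=tI$, hence $I=0$ by Nakayama applied at the points of the special fibre. Here you are actually more explicit than the paper, which compresses this step into the ``observation'' that $\tilde{\ccR}\subset\ccR$ with $\dim\tilde{\ccR}_0=0$ and $\deg\tilde{\ccR}_0=r$.

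The one genuine omission is part \ref{item_tilde_ccR_is_flat}, which is part of the statement but which you assume outright. It does follow quickly from the criterion you yourself invoke, so you should add the argument: since $\tilde{\ccR}$ is reduced, its associated points are exactly the generic points of its irreducible components, and each such component is the closure of an irreducible component of $\ccR|_{D^{\bullet}}$, hence dominates $D$ provided no component of $\ccR|_{D^{\bullet}}$ is vertical. To rule out vertical components (and to know that $\ccR|_{D^{\bullet}}$ is reduced, so that your reduced closure $\tilde{\ccR}$ agrees with the scheme-theoretic closure implicitly used in the torsion argument), note that $\ccR|_{D^{\bullet}}\to D^{\bullet}$ is proper with finite fibres, hence finite, and all its fibres are reduced of the same length $r$ over a reduced base; by semicontinuity the pushforward of the structure sheaf is locally free of rank $r$, so $\ccR|_{D^{\bullet}}\to D^{\bullet}$ is flat with reduced fibres, and therefore has reduced total space and no vertical associated points. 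With these few sentences added, your proof is complete and coincides with the paper's intended argument.
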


\begin{proof}
   By \cite[Prop.~III.9.8]{MR0463157} and its proof the map $\tilde{\ccR} \to C$ is flat,
     thus \ref{item_tilde_ccR_is_flat} is proved.
   By the same proposition $\ccR$ is flat if and only if $\ccR = \tilde{\ccR}$.
   Thus
    \ref{item_ccR_flat_iff_deg_is_r}
     follows from the observation  that $\tilde{\ccR} \subset \ccR$,
       $\dim \tilde{\ccR}_c =0$,  and $\deg \tilde{\ccR}_c =r$.
\end{proof}

 Next   we explain what we mean by an analytic smoothing of a zero dimensional subscheme $R \subset Y$
  (here $Y$ will either be equal to $X$ or to $\PP V$ from Lemma~\ref{lemma_existence_of_Q}).

\begin{definition}\label{def_analytic_smoothing}
  Consider  a  subset $ \ccR \subset Y \times D$,  closed in the Euclidean topology.

  Suppose:
   \begin{itemize}
      \item $\ccR = \ccR^{(1)} \cup \dotsb \cup \ccR^{(r)}$ is a union of $r$ subsets;
      \item each $\ccR^{(i)}$ analytically locally is a zero set of a collection of holomorphic functions,
      \item each $\ccR^{(i)}$ is mapped biholomorphically onto $D$
            via the restriction of projection $Y \times D \to D$.
      \item for each $t \in D \setminus \set{0}$, the preimage $\ccR_t$ is a collection of $r$ distinct points.
   \end{itemize}
   In such  a  situation, it makes sense to study the special fibre $\ccR_0$ of $\ccR \to D$, as a finite subscheme in
      $\PP V \simeq \PP V \times \set{0}$.
   (One can use the theorems of GAGA, for instance  \cite[Thm~3, p. 20]{serre_GAGA},   but this special case is much easier:
      near every point $x$ of support of the fibre we have $\ccR_0$ defined by an ideal generated
      by a  collection   of holomorphic functions,
      in such a way that a power of the maximal ideal of $x$ is contained in the ideal,
      so the holomorphic functions can be chosen to be  polynomials.)
   We say that $\ccR$ is an \emph{analytic smoothing of a subscheme}  $R \subset Y$ if $\ccR_0 = R \times \set{0}$.
\end{definition}

Given $\ccR$ an analytic smoothing of $R \subset Y$,
  we define a \emph{completion} $\widehat{\ccR}$ of $\ccR$,
  to be the subscheme $\ccR$ of $Y \times \widehat{D}$
  locally defined by the same power series as Taylor expansions of holomorphic functions defining
  $\ccR$  in  $Y \times D$.
Note that the completion has the following properties:
\begin{itemize}
 \item the special fibre $\widehat{\ccR}_{0}$ is $R \times \set{0}$;
 \item the generic fibre $\widehat{\ccR}^{\bullet}$ over $\widehat{D}^{\bullet}$
           is reduced and  consists   of  $r= \deg R$  distinct $\widehat{D}^{\bullet}$-points;
\end{itemize}

\begin{lemma}\label{lemma_analytic_smoothability}
   Suppose $R \subset Y$ is a zero dimensional subscheme of $Y$. The following conditions are equivalent:
   \begin{itemize}
     \item $R$ is smoothable in $Y$;
     \item there exists an analytic smoothing $\ccR \subset Y \times D$ of $R$.
     \item there exists a subscheme $\widehat{\ccR} \subset Y \times \widehat{D}$,
           such that the special fibre $\widehat{\ccR}_0$ is $R \times \set{0}$
           and the generic fibre $\widehat{\ccR}^{\bullet}$ over $\widehat{D}^{\bullet}$
           is reduced and   consists    of $r$ distinct $\widehat{D}^{\bullet}$-points.
   \end{itemize}
\end{lemma}

\begin{proof}
  First suppose $R$ is smoothable in $Y$, so that $R$ is in $\ccH_r(Y)$,
    and there exist a quasiprojective curve $C$ and a map  $C \to \ccH_r(Y)$,
    such that general point of $C$ is mapped to the locus of the Hilbert scheme representing $r$ distinct points
    and a special point $c_0 \in C$ is mapped to $R \in \ccH_r(Y)$.
  Precomposing with the normalisation of $C$, we may assume $C$ is smooth.
  Consider the pullback $\ccR_C$ of the universal family to $C$,
    that is a flat finite map $\ccR_C\to C$, such that $\ccR_C \subset Y \times C$,
    with special fibre $\ccR_C|_{c_0} = R \times \set{c_0}$ and a general fibre consisting of $r$ distinct (reduced) points.
  Note that $\ccR_C$ is reduced by Lemma~\ref{lemma_criterion_for_flatness}.
  Restricting $\ccR_C \to C$ to a small disk $D \subset C$ around $c_0$,
    we obtain an analytic smoothing $\ccR \subset Y \times D$.

  Now suppose $\ccR \subset Y \times D$ is an analytic smoothing.
  Then the completion $\widehat{\ccR}$ has the properties as in the final item.

  For the remaining implication we refer to \cite[Lem.~4.1]{cartwright_erman_velasco_viray_Hilb8}.
  Note that $\widehat{\ccR} \to \widehat{D}$ is flat by Lemma~\ref{lemma_criterion_for_flatness}.
\end{proof}

\begin{proof}[Proof of Lemma~\ref{lemma_existence_of_Q}]
  Let $U_1 \subset \PP V$ and $U_2 \subset X$ be two sufficiently small open analytic neighborhoods of the support of $R$
    (which by our assumption is contained in $X$).
  Since $R\subset \PP V$ is smoothable, by Lemma~\ref{lemma_analytic_smoothability}
    there exists an analytic smoothing  $\ccR \subset \PP V \times D$.
  Without loss of generality, we may assume $D$ is small enough so that $\ccR \subset U_1 \times D$.
  Suppose also  $\pi:U_1 \to U_2$ is a holomorphic fibration such that $\pi|_{U_2} = \id_{U_2}$.
  There are many such fibrations, provided $U_1$ and $U_2$ are sufficiently small.
  Locally around $x \in R$,   one way to obtain them is  by composing the following holomorphic maps:
  \[
     U_1 \to T_x \PP V \to T_x X \to U_2, \text{ where:}
  \]
  \begin{itemize}
   \item $U_1 \to T_x \PP V$ is a biholomorphism of $U_1$ with an open neighborhood of $0$ in $T_x \PP V$,
            such that $U_2$ is mapped into $T_x X$ (this exists by the inverse function theorem, because $X$ is smooth);
   \item $T_x \PP V \to T_x X$ is any linear projection such that the restriction to $T_x X$ is the identity;
   \item $T_x X \to U_2$ is defined in a small analytic neighborhood of $0$
            and is the inverse of $U_1 \to T_x \PP V$ restricted to $U_2$.
  \end{itemize}
  It is clear that we have a choice of linear projections $T_x \PP V \to T_x X$,
     and a fibration $\pi$ that arises from a general such projection will have the following property
     (perhaps after replacing $D$ with a smaller disk):
  \begin{itemize}
   \item the $r$ disjoint components of $\ccR \setminus \ccR_0 = \ccR|_{D \setminus \set{0}}$
            under $\pi \times \id_{D \setminus \set{0}}$ are mapped to $r$ disjoint components in
            $U_2 \times (D \setminus \set{0})$
  \end{itemize}
  Let $\ccQ \subset X \times D$ be the image $(\pi \times \id_{D})(\ccR)$.
  It is straightforward to verify that the properties of Definition~\ref{def_analytic_smoothing}
     are satisfied for the family $\ccQ$, so $\ccQ$ is an analytic smoothing of its special fibre
     $Q \subset X$, which is smoothable by Lemma~\ref{lemma_analytic_smoothability}.
  It remains to verify that $R \cap X \subset Q$.

  Informally speaking, $\pi(R\cap X)= R \cap X$, because $\pi|_{U_2} = \id_{U_2}$ and $R\cap X \subset U_2$.
  Thus:
  \begin{equation} \label{equ_R_cap_X_subset_Q}
           (R\cap X) \times \set{0}  = \pi(R\cap X) \times \set{0} \subset (\pi \times \id_{D})(\ccR)  = \ccQ
  \end{equation}
  and thus $R\cap X \subset Q$.

  However formally $\pi(R\cap X)$ makes no sense, because $R \cap X$ and $\pi$ belong to different categories.
  One way to overcome this is to use the category of analytic spaces, another is to use completions of local rings.
  We explain the latter method.

  Let $\widehat{U_1}$ be the disjoint union of $\Spec \hat {\ccO}_{x,\PP V}$ over closed points $x \in R$,
     and analogously let  $\widehat{U_2}$ be the union of $\Spec \hat {\ccO}_{x,X}$.
  We can ``restrict'' $\pi$ to $\widehat{\pi}: \widehat{U_1} \to \widehat{U_2}$
      using Taylor expansions of the holomorphic functions defining $\pi$.
  Since $R \subset \widehat{U_1}$
      and $\widehat{\pi}|_{\widehat{U_2}} =  \id_{\widehat{U_2}}$,
      we have $\widehat{\pi}(R\cap X)= R \cap X$
      and the formally correct rewrite of \eqref{equ_R_cap_X_subset_Q} is:
  \[
           (R\cap X) \times \set{0}  = \widehat{\pi}(R\cap X) \times \set{0}
                \subset (\widehat{\pi} \times \id_{\widehat{D}})(\widehat{\ccR})  = \widehat{\ccQ}.
  \]
\end{proof}

The proof of Theorem~\ref{item_case_smooth_r} is now complete.
   If the answer to the following question is positive, it would be possible
  to avoid  using analytic methods and argue using the spectra of completions of local rings
  instead of analytic neighborhoods in the proof.
\begin{question}\label{question_smoothable}
  Let $X$ be a smooth variety, let $x\in X$ be a point and let $\widehat{X} = \Spec \hat{\ccO}_{x,X}$.
  Suppose $R \subset X$  is   a zero dimensional subscheme of $X$ supported at $x$.
  If $R$ is smoothable in $\widehat{X}$, then is it necessarily smoothable in $X$?
\end{question}

\section{Extensions to singular varieties}\label{section_singular}

Throughout this section we continue to use Notation~\ref{def_Sigma_rdX}.

For singular varieties the inclusion  $\srdx \subseteq  \sigma_r(v_d (X))$
  may fail  to be an equality
  (see \S\ref{section_counterexamplesc}--\ref{section_counterexamples_complete_intersection}).
In this section we study the inclusion in detail.

\subsection{Properties of tangent star}
We commence with a brief overview of elementary properties of the tangent star defined in \S\ref{moreprecisesect}.

For a  scheme  $X\subset \BP V$ and  a closed point  $x\in X$, the \emph{embedded affine Zariski tangent}
 space $\hat T_xX \subset V$ may be defined by recalling that
 the (abstract) Zariski tangent space $T_xX$ is a linear subspace of $T_x\BP V$, and
 $T_x\BP V=\hat x^*\ot V/\hat x$.
Here $\hat x$ is the $1$-dimensional subspace in $V$ representing $x$,
  and $\hat x^*$ is the dual linear space, so that $\hat x^*=\cO(1)_x$.
The affine Zariski tangent space is the inverse image of $T_xX \otimes \hat x$ in $V$.
The \emph{projective Zariski tangent space} $\BP \hat T_x X\subset \BP V$ is its associated projective space.

\begin{proposition}\label{prop_tangent_star}
   Let $X \subset \PP V$ be a  reduced scheme,  let $v\colon \PP V \to \PP W$ be an embedding (for instance $v = v_d$),
      and let $x \in X$. Then:
   \begin{enumerate}
     \item \label{item_tngt_star_subset_tngt_space}
           $T^{\star}_x X \subset \PP \hat T_x X$
     \item \label{item_derivative_of_embedding}
           The derivative of $v$ determines a linear isomorphism of $\PP V = \PP \hat T_x (\PP V)$ with $\PP \hat T_{v(x)} v(\PP V)$.
     \item \label{item_star_is_mapped_to_star}
           The isomorphism above maps $T^{\star}_x X$ onto $T^{\star}_{v(x)} v(X)$.
     \item \label{item_nonreduced_deg_2}
           A non-reduced subscheme $R \subset X$ of degree $2$ supported at $x$
             is uniquely determined by a line $x \in \ell \subset \PP \hat T_x X$.
     \item \label{item_nonreduced_deg_2_smoothable}
           A scheme $R$ as in \ref{item_nonreduced_deg_2} is smoothable in $X$ if and only if $\ell \subset T^{\star}_x X$.
   \end{enumerate}
\end{proposition}

Properties \ref{item_tngt_star_subset_tngt_space}--\ref{item_star_is_mapped_to_star} are clear.
\ref{item_nonreduced_deg_2} follows from \cite[\S{}VI.1.3]{eisenbud_harris},
   see also \cite[Example~II-10]{eisenbud_harris} for an elementary example.
\ref{item_nonreduced_deg_2_smoothable} follows from the definition of tangent star.

\subsection{Positive results for singular varieties}

First  we prove $\sigma_r(v_d (X))$ is an irreducible component of $\srdx$.

\begin{theorem}
\label{item_case_union_of_components}Suppose $X \subset \PP V$ is  a variety.
           If   $d \ge \tmax\{2r-2,r-1 + Got(h_X)\} $, then $\sigma_r ( v_d(X))$ is an irreducible component of $\srdx$.
\end{theorem}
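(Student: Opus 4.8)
The plan is to prove that $\sigma_r(v_d(X))$ is an irreducible component of $\srdx$ by showing the two sets coincide in an analytic neighborhood of a general point of $\sigma_r(v_d(X))$. Recall that the inclusion $\sigma_r(v_d(X))\subseteq\srdx$ always holds (as noted at the end of the proof of Theorem~\ref{item_case_smooth_r}) and that $\sigma_r(v_d(X))$ is irreducible because $X$ is. First I would record the elementary topological reduction: if an irreducible closed subset $Z$ of a Noetherian space $W$ contains a nonempty open subset of $W$, then $Z$ is a component of $W$, since every component of $W$ meeting that open set is contained in $\overline{Z}=Z$, while $Z$ itself lies in some component, forcing equality. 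Thus it suffices to exhibit a general point $p_0\in\sigma_r(v_d(X))$ together with an analytic ball $B\ni p_0$ satisfying $\srdx\cap B\subseteq\sigma_r(v_d(X))$.

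Next I would analyze such a general $p_0$, which has $X$-rank exactly $r$ and a reduced representing scheme $R_0=\{x_1,\dots,x_r\}$ consisting of $r$ distinct \emph{smooth} points of $X$. Given any $p'\in\srdx$, Lemma~\ref{drpoints} applied to $\BP^n$ produces a smoothable scheme $R'\in\ccH_r(\BP^n)$ of degree at most $r$ with $p'\in\langle v_d(R')\rangle$, which I take to be minimal. Since $p'\in\langle v_d(X)\rangle$ and $d\ge r-1+Got(h_X)$, the Main Lemma~\ref{lemma_p_is_in_span_of_intersection} gives $p'\in\langle v_d(R'\cap X)\rangle$, and minimality of $R'$ then forces $R'\cap X=R'$, i.e. $R'\subseteq X$. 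The only remaining point is that, once $p'$ lies close to $p_0$, the scheme $R'$ is supported at smooth points of $X$ near $x_1,\dots,x_r$ and is in fact $r$ distinct points, so that $R'\in\ccH_r(X)$ automatically and Lemma~\ref{drpoints} yields $p'\in\sigma_r(v_d(X))$. (Alternatively, mirroring the proof of Theorem~\ref{item_case_smooth_r}, one projects a smoothing of $R'$ through a local fibration over $X_{\mathrm{sm}}$, which exists because the support lies in the smooth locus, producing a degree-$r$ scheme smoothable in $X$ that contains $R'\cap X$.)

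The crux, and the step I expect to be the main obstacle, is the continuity claim that for $p'$ near $p_0$ the minimal representing scheme $R'$ is forced to lie near $R_0$. I would prove it by a properness-and-identifiability argument. The incidence set $Z=\{(R,p):R\in\ccH_r(\BP^n),\ \deg R\le r,\ p\in\langle v_d(R)\rangle\}$ is proper over $\BP S^d V$, so along any sequence $p'\to p_0$ the associated schemes subconverge to some $R_\infty\in\ccH_r(\BP^n)$ with $p_0\in\langle v_d(R_\infty)\rangle$, and it remains to show $R_\infty=R_0$. This is where the hypothesis $d\ge 2r-2$ enters: for general $p_0$ any second degree-$\le r$ scheme $Q$ representing $p_0$ must meet $R_0$, a disjoint second decomposition being excluded by a dimension count (the spans of two generic disjoint degree-$r$ schemes fail to meet once $d$ is large enough that $\sigma_r(v_d(\BP^n))$ is a proper subvariety). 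Hence $\deg(R_0\cup Q)\le 2r-1\le d+1$, so Corollary~\ref{cor_R_subset_Q} forces $R_0\subseteq Q$ and therefore $Q=R_0$ by minimality and degree. Thus $p_0$ is identifiable, $R_\infty=R_0$, and $R'$ consists of $r$ distinct points near the $x_i$ for $p'$ close to $p_0$.

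Finally I would assemble the pieces: the previous steps furnish an analytic ball $B$ around $p_0$ with $\srdx\cap B\subseteq\sigma_r(v_d(X))$, so $\dim_{p_0}\srdx=\dim\sigma_r(v_d(X))$ and, by the topological reduction, the irreducible set $\sigma_r(v_d(X))$ is a component of $\srdx$. The delicate point throughout is the boundary value $d=2r-2$ in the identifiability step, where Corollary~\ref{cor_R_subset_Q} applies only after ruling out disjoint competing decompositions; I would handle this with the dimension count above, using that $\sigma_r(v_d(X))$ has the expected dimension $r\dim X+r-1$ for $d$ in the stated range.
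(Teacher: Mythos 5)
Your overall architecture (reduce to showing $\srdx$ and $\sigma_r(v_d(X))$ agree near a general point $p_0$, pull the representing scheme into $X$ via the Main Lemma and minimality, then force it to be reduced) is a legitimate and genuinely different route from the paper's, but it hinges on an identifiability claim that is false exactly at the boundary value $d=2r-2$ that the theorem must cover. Your claim is that for general $p_0\in\sigma_r(v_d(X))$ with reduced decomposition $R_0$, every scheme $Q$ of degree $\le r$ with $p_0\in\langle v_d(Q)\rangle$ equals $R_0$, disjoint $Q$ being excluded by a dimension count. Take $X$ a line in $\BP^n$ (so $Got(h_X)=1$ and the hypothesis allows $d=2r-2$ for every $r\ge 2$), or even $r=2$, $d=2$: a general binary form of degree $2r-2$ has border rank exactly $r$ but a \emph{one-parameter family} of rank-$r$ decompositions (e.g.\ $x^2+y^2=(\cos\theta\, x+\sin\theta\, y)^2+(-\sin\theta\, x+\cos\theta\, y)^2$), and two general members of this family are disjoint — even though $\sigma_r(v_d(\BP^n))$ is a proper subvariety, so your parenthetical criterion is satisfied while the conclusion fails. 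Worse for your properness argument: the apolar ideal of such a $p_0$ is a pencil of degree-$r$ binary forms, and the pencil contains elements with multiple roots, so $p_0$ also admits \emph{non-reduced} minimal representing schemes of degree $r$. Consequently the limit $R_\infty$ of the minimal schemes $R'$ attached to $p'\to p_0$ need not be $R_0$, need not be reduced, and the openness of reducedness in the Hilbert scheme gives you nothing; the whole continuity step collapses. (Your argument is fine once $d\ge 2r-1$, where Corollary~\ref{cor_R_subset_Q} applies even to disjoint pairs since $\deg(R_0\cup Q)\le 2r\le d+1$; the gap is precisely the case $d=2r-2$, $Got(h_X)\le r-1$.)

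The paper's proof shows how to avoid uniqueness altogether. It only uses Corollary~\ref{cor_R_subset_Q} against schemes $Q$ of degree $\le r-1$, where $\deg(R_0\cup Q)\le 2r-1$ so $d\ge 2r-2$ suffices; this yields merely that a general $p\in\sigma_r(v_d(X))$ has \emph{border} rank exactly $r$, i.e.\ $p\notin\sigma_{r-1}(v_d(\BP V))$. It then invokes the fact that the locus of points of $v_d(\BP V)$-rank exactly $r$ is open in $\sigma_r(v_d(\BP V))\setminus\sigma_{r-1}(v_d(\BP V))$: since the component $\Sigma\supseteq\sigma_r(v_d(X))$ contains the rank-$r$ point $p$, its \emph{general} point $p'$ also has some decomposition $R'$ into $r$ distinct points — not necessarily close to, or unique, or related to $R_0$ — and then the Main Lemma puts $p'\in\langle v_d(X\cap R')\rangle$ with $X\cap R'$ reduced, hence smoothable in $X$, so $p'\in\sigma_r(v_d(X))$. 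If you want to salvage your write-up, replace the identifiability/limit step by this openness-of-the-rank-locus argument; as it stands, the proof has a genuine gap.
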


\begin{proof}
  The  set
      $\sigma_r(v_d(X))$ is irreducible because $X$ is.
  Let $\Sigma$ be an irreducible component of $\srdx$ containing $\sigma_r(v_d(X))$.
  For a general point $p \in \sigma_r(v_d(X))$,
    let $p \in \langle v_d (R) \rangle$, where $R \subset X$ consists of $r$ distinct points,
    and $p$ is not in the span of any of $r-1$ of those points.
  We claim $p \notin  \sigma_{r-1}(v_d(\BP V))$.
  Suppose to the  contrary
    that $Q \subset \BP V$ is a zero-dimensional scheme of degree $\le r-1$,
    such that $p \in \langle v_d (Q) \rangle$.
  Then   Corollary~\ref{cor_R_subset_Q} implies
    $R \subset Q$, a contradiction, since $\deg R > \deg Q$.

  The set of points with $v_d(\BP V)$-rank $r$
    is open in $\sigma_{r}(v_d(\BP V)) \setminus \sigma_{r-1}(v_d(\BP V))$.
  Thus since $\Sigma \subset \sigma_{r}(v_d(\BP V))$,  $p \in \Sigma$,
  $p\not\in\s_{r-1}(v_d(\BP V))$, and $p$ has $v_d(\BP V)$-rank $r$,
    a general point $p'$ in $\Sigma$ also has $v_d(\BP V)$-rank $r$.
  Let $R'$ be the union of $r$ distinct points of $\BP V$ such that
    $p' \in \langle v_d (R') \rangle$.
  By Lemma~\ref{lemma_p_is_in_span_of_intersection},   $p' \in \langle v_d (X \cap R') \rangle$,
    and $X \cap R'$ is smooth (hence trivially smoothable).
  Thus $p' \in \sigma_r(v_d(X))$ and $\sigma_r(v_d(X)) = \Sigma$ as claimed.
\end{proof}

Recall that a variety $X\subset \BP V$ is said to have {\it at most hypersurface singularities}, if
the dimension of its Zariski tangent space at any point is at most one greater than the
dimension of $X$.

\begin{theorem}\label{item_case_hypersurface_sings_r_le_2}
           If $X$ has only hypersurface singularities,
           and $r = 2$,
           then $\sigma_r  (v_d(X)) = \srdx$ for all $d \ge d_0 = Got(h_X) + 1$.
\end{theorem}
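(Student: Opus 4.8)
The plan is to follow the same strategy as the proof of Theorem~\ref{item_case_smooth_r}, carefully isolating where smoothness was used and showing that for $r=2$ the hypersurface singularity hypothesis suffices to recover the key step. As before, take $p\in\srdx$; by Lemma~\ref{drpoints} there is a smoothable zero-dimensional scheme $R\subset\BP V$ of degree at most $2$ with $p\in\langle v_d(X)\rangle\cap\langle v_d(R)\rangle$, and by Lemma~\ref{lemma_p_is_in_span_of_intersection} (whose hypotheses hold since $d\ge Got(h_X)+1$) we get $p\in\langle v_d(R\cap X)\rangle$. Discarding components of $R$ supported off $X$ as in the smooth proof, I may assume $\reduced R\subset X$, and the only nontrivial case is $\deg R=2$. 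The goal is to produce a smoothable degree-$2$ subscheme $Q\subset X$ with $R\cap X\subset Q$ and $p\in\langle v_d(Q)\rangle$, whence $p\in\sigma_2(v_d(X))$ by Lemma~\ref{drpoints}.

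The case analysis is short because a degree-$2$ scheme is either two reduced points or a single length-$2$ fat point. If $R$ consists of two distinct points both lying on $X$, then $R\cap X=R\subset X$ is already a smoothable subscheme of $X$ and we are done immediately. The remaining case is when $R$ is a length-$2$ scheme supported at a single point $x\in X$, i.e.\ a tangent direction $\ell$ at $x$ in $\BP V$. If $x$ is a smooth point of $X$ the argument of Theorem~\ref{item_case_smooth_r} applies verbatim. So the heart of the matter is when $x$ is a singular point with $X$ having a hypersurface singularity there: the Zariski tangent space $T_xX$ has dimension $\dim X+1$, and I must understand $R\cap X$ and realize it inside a smoothable length-$2$ subscheme of $X$.

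The key step, and the main obstacle, is the local analysis at the hypersurface singularity. When $R$ is the fat point determined by a tangent vector $\ell\in T_x\BP V$, the scheme $R\cap X$ is either the reduced point $\{x\}$ (if $\ell\notin T_xX$) or the full length-$2$ scheme $R$ (if $\ell\in T_xX$). In the first case $p\in\langle v_d(R\cap X)\rangle=\langle v_d(x)\rangle$ forces $p=v_d(x)\in\sigma_1(v_d(X))\subset\sigma_2(v_d(X))$. In the second case $R\subset X$ is itself a length-$2$ subscheme of $X$, and I must argue it is smoothable \emph{in $X$}: here the hypersurface hypothesis is decisive, because near $x$ the variety $X$ is locally a hypersurface in a smooth ambient $\dim X+1$-fold, and any length-$2$ scheme on a hypersurface — being supported at one point with a single tangent direction lying in the tangent cone — deforms to two nearby reduced points of $X$ by the reducedness/flatness criterion of Lemma~\ref{lemma_criterion_for_flatness}. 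Making this smoothability claim precise is where the $r=2$ and hypersurface assumptions interact most delicately: for length $2$ a tangent vector in the Zariski tangent space always integrates to an honest arc on the hypersurface, but this is exactly what can fail for higher $r$ or worse singularities. I would verify this by choosing a local analytic arc on $X$ through $x$ with the prescribed tangent direction and applying Lemma~\ref{lemma_criterion_for_flatness} to the closure of the associated family, concluding $R$ is smoothable in $X$ and hence $p\in\sigma_2(v_d(X))$. The reverse inclusion $\sigma_2(v_d(X))\subset\srdx$ holds in general.
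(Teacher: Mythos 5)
Your reduction is essentially the paper's: for $r=2$ everything funnels through Lemma~\ref{drpoints} and Lemma~\ref{lemma_p_is_in_span_of_intersection}, the two-distinct-points case and the case $\ell\not\subset\BP\hat T_xX$ are immediate, and the whole theorem comes down to showing that a double point $R\subset X$ supported at a singular point $x$, with tangent direction $\ell\subset\BP\hat T_xX$, is smoothable \emph{in} $X$. (The paper packages this as condition~\ref{item_condition_on_smoothability_of_R} of Proposition~\ref{prop_eis_reduction_to_PV_most_general} --- condition~\ref{item_condition_on_dim_Hilb_Q} being vacuous for $r=2$ --- and verifies it in Lemma~\ref{lemma_when_smoothability_holds}.)

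But your justification of that one key step has a genuine gap. You assert that \emph{``for length $2$ a tangent vector in the Zariski tangent space always integrates to an honest arc on the hypersurface,''} and you propose to smooth $R$ by moving points along such an arc and invoking Lemma~\ref{lemma_criterion_for_flatness}. This is false: tangent directions of arcs on $X$ at $x$ always lie in the tangent cone of $X$ at $x$, which at a hypersurface singularity is the \emph{proper} subvariety of $\BP\hat T_xX$ cut out by the leading form of a local equation; indeed your own phrase ``tangent direction lying in the tangent cone'' silently replaces the Zariski tangent space by the tangent cone, and these differ exactly at singular points. Concretely, for the cuspidal plane curve $X=\{y^2=x^3\}$ (which has only hypersurface singularities, so lies within the theorem's hypotheses) the Zariski tangent space at the origin is the whole plane, yet every non-constant analytic arc through the origin factors through the normalization $t\mapsto(t^2,t^3)$ and hence has tangent direction $(1,0)$; there is no arc with tangent direction $(0,1)$, so your construction cannot even begin for that $\ell$. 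The double point in direction $(0,1)$ \emph{is} nevertheless smoothable in $X$, but only via two points moving independently: it is the flat limit of the pairs $\{(t^2,t^3),(t^2,-t^3)\}$, whose ideals $(x-t^2,\;y^2-t^6)$ degenerate to $(x,y^2)\supset(y^2-x^3)$. This is precisely what the tangent star $T^{\star}_xX$ of \S\ref{moreprecisesect} encodes: $R$ is smoothable in $X$ if and only if $\ell\subset T^{\star}_xX$ (limits of secants through \emph{pairs} of points of $X$ converging to $x$), and the nontrivial input your argument is missing --- supplied by the paper's Lemma~\ref{lemma_when_smoothability_holds} --- is that hypersurface singularities satisfy $T^{\star}_xX=\BP\hat T_xX$. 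Replacing your arc claim by this tangent-star fact repairs the proof; as written, the argument fails at exactly the step where the hypersurface hypothesis was supposed to do its work.
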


We postpone the proof of the theorem until later in this subsection.
Proposition~\ref{prop_eis_reduction_to_PV_most_general} below gives further, technical conditions
  that imply $\sigma_r  (v_d(X)) = \srdx$.

\begin{prop}\label{prop_eis_reduction_to_PV_most_general}
  Suppose $X \subset \BP V$ is a  reduced scheme  and
  $r$ is an integer such that:
  \renewcommand{\theenumi}{\Alph{enumi}}
  \renewcommand{\labelenumi}{\theenumi.}
  \begin{enumerate}
     \item \label{item_condition_on_smoothability_of_R}
          All zero-dimensional subschemes $R \subset X$  of degree $r$ which are smoothable in $\BP V$
          are also smoothable in $X$.
     \item \label{item_condition_on_dim_Hilb_Q}  All (locally) Gorenstein
          zero-dimensional subschemes $Q\subset X$ of degree $q$ with $ q < r$ are smoothable in $\BP V$.
  \end{enumerate}
  \renewcommand{\theenumi}{(\roman{enumi})}
  \renewcommand{\labelenumi}{\theenumi}
  Then $\sigma_r(v_d (X)) = \srdx$ for all $d \ge d_0 = Got(h_X) +r-1$.
\end{prop}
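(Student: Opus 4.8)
The plan is to prove only the inclusion $\srdx \subseteq \sigma_r(v_d(X))$, since the reverse inclusion always holds. Fix $p \in \srdx$, so in particular $p \in \sigma_r(v_d(\BP V))$ and $p \in \langle v_d(X)\rangle$. By Lemma~\ref{drpoints} applied to $\BP V$ there is a scheme $R_0 \in \ccH_r(\BP V)$, that is, a zero-dimensional scheme of degree $r$ smoothable in $\BP V$, with $p \in \langle v_d(R_0)\rangle$. Since $d \ge Got(h_X)+r-1$, the Main Lemma~\ref{lemma_p_is_in_span_of_intersection} gives
\[
  p \in \langle v_d(R_0)\rangle \cap \langle v_d(X)\rangle = \langle v_d(X \cap R_0)\rangle .
\]
Set $S := X \cap R_0$, a zero-dimensional subscheme of $X$ with $\deg S \le r$ and $p \in \langle v_d(S)\rangle$. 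The gain over the smooth case is that everything now lives inside $X$, so we never have to build a smoothing family in $X$ by hand; instead Conditions~\ref{item_condition_on_smoothability_of_R} and \ref{item_condition_on_dim_Hilb_Q} are used to transfer smoothability between $\BP V$ and $X$.

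If $\deg S = r$, then $S \subseteq R_0$ with equal length forces $S = R_0$, hence $R_0 \subseteq X$. Being a degree-$r$ subscheme of $X$ smoothable in $\BP V$, Condition~\ref{item_condition_on_smoothability_of_R} makes $R_0$ smoothable in $X$, i.e.\ $R_0 \in \ccH_r(X)$; then $p \in \langle v_d(R_0)\rangle$ and Lemma~\ref{drpoints} applied to $X$ give $p \in \sigma_r(v_d(X))$. So the substantive case is $q := \deg S < r$.

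In that case choose $R \subseteq S$ minimal with respect to the property $p \in \langle v_d(R)\rangle$; then $R \subseteq X$, and $s := \deg R \le q < r$. Minimality is downward-closed, so $R$ is in fact globally minimal, and by Corollary~\ref{cor_R_subset_Q} it is the unique such scheme inside $S$ (here $\deg(R\cup S)=q\le r$, so the degree hypothesis of the corollary is met by $d\ge r-1$). The key step is that this minimal $R$ is \emph{Gorenstein}; granting this, Condition~\ref{item_condition_on_dim_Hilb_Q} shows $R$ is smoothable in $\BP V$. Since $X$ has more than $r$ points, pick $r-s$ general reduced points of $X$ disjoint from $\operatorname{Supp} R$ and set $\tilde R := R \sqcup \{\text{those points}\}$. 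Then $\tilde R \subseteq X$ has degree $r$ and is smoothable in $\BP V$ (smooth $R$ while holding the added points fixed), so Condition~\ref{item_condition_on_smoothability_of_R} makes $\tilde R$ smoothable in $X$, i.e.\ $\tilde R \in \ccH_r(X)$. Finally $p \in \langle v_d(R)\rangle \subseteq \langle v_d(\tilde R)\rangle$, and Lemma~\ref{drpoints} yields $p \in \sigma_r(v_d(X))$, finishing the argument.

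The main obstacle is the claim that a scheme $R$ minimal with $p \in \langle v_d(R)\rangle$ is Gorenstein, which is exactly what justifies applying Condition~\ref{item_condition_on_dim_Hilb_Q}. I would establish it through Macaulay's apolarity/inverse-systems correspondence: writing $p=[F]$, for $d$ large relative to $\deg R$ one has $p \in \langle v_d(R)\rangle$ if and only if the saturated ideal $I(R)$ is contained in the apolar ideal $F^{\perp}$, and the minimality of $R$ (together with the uniqueness supplied by Corollary~\ref{cor_R_subset_Q}) pins $R$ down as the scheme attached to an Artinian Gorenstein quotient, hence forces its local rings to have one-dimensional socle. This is the content of the equivalence lemma flagged in the acknowledgments, and the delicate point is certifying the degree range in which minimality genuinely implies the Gorenstein property; the bound $d \ge Got(h_X)+r-1$ is what keeps $\deg R$ small enough for the apolarity dictionary and Corollary~\ref{cor_R_subset_Q} to apply.
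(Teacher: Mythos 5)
Your proposal is correct and follows essentially the same route as the paper's own proof: Lemma~\ref{drpoints} plus Lemma~\ref{lemma_p_is_in_span_of_intersection} put $p$ in $\langle v_d(X\cap R_0)\rangle$, a minimal subscheme $R$ of it is Gorenstein, Condition~\ref{item_condition_on_dim_Hilb_Q} smooths $R$ in $\BP V$, padding with $r-s$ reduced points and Condition~\ref{item_condition_on_smoothability_of_R} smooths it in $X$, and Lemma~\ref{drpoints} concludes (the paper organizes the dichotomy as $Q=R$ versus $q<r$ instead of by $\deg S$, which is immaterial). The one place to repair is your justification of the Gorenstein claim: no apolarity is needed, and your sketch as written is off target (the apolar algebra of $F$ is not the coordinate ring of $R$, so minimality does not exhibit $R$ as an Artinian Gorenstein quotient); rather, minimality of $R$ says precisely that $\langle v_d(R)\rangle \ne \bigcup_{R'\subsetneq R}\langle v_d(R')\rangle$, since $p$ lies in the left-hand side but in no $\langle v_d(R')\rangle$, and this is exactly condition~\ref{item_equivalence_span} of Lemma~\ref{lemma_equivalence_of_Gorenstein}, which then gives Gorenstein directly --- this one-line citation is how the paper argues, and it is all you need.
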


Condition~\ref{item_condition_on_smoothability_of_R} is quite strong.
It holds for smooth $X$, see Proposition~\ref{prop_smothable_in_X_iff_smoothable_in_Y}.
We observe in Lemma~\ref{lemma_when_smoothability_holds}
that  the condition is satisfied in the situation of Theorem~\ref{item_case_hypersurface_sings_r_le_2}.
On the contrary, in \S\ref{section_counterexamplesc}--\S\ref{section_counterexamples_complete_intersection}
we use failure of condition~\ref{item_condition_on_smoothability_of_R} to produce counter-examples to the EKS Conjecture.

On the other hand, condition~\ref{item_condition_on_dim_Hilb_Q} is much milder
 --- we list several cases when it is known to hold in Lemma~\ref{lemma_when_condition_on_Hilb_Q_holds}.
In particular, in the situation of Theorem~\ref{item_case_hypersurface_sings_r_le_2} it holds trivially,
  as here $q \le 1$.
We are also unaware of any situation when $X$ is singular,
  condition~\ref{item_condition_on_smoothability_of_R} is satisfied,
but condition~\ref{item_condition_on_dim_Hilb_Q} fails to be satisfied.
 In fact,
condition \ref{item_condition_on_dim_Hilb_Q} might not be needed.
For example, condition \ref{item_condition_on_dim_Hilb_Q} fails for smooth $X$ with $\dim X \ge 6$ and $r \ge 15$
(i.e., there exist non-smoothable zero-dimensional Gorenstein schemes with embedding dimension
$6$ and of degree $14$, see \cite[Cor.~6.21]{MR1735271} or \cite[\S6]{BuBu})
 yet for smooth $X$ the equality holds in \eqref{equ_subset}.

Before proving Proposition~\ref{prop_eis_reduction_to_PV_most_general}
  we explain the relation of condition~\ref{item_condition_on_dim_Hilb_Q}
 with our problem in the following lemma.

\begin{lemma}
  \label{lemma_equivalence_of_Gorenstein}
  Suppose $Q\subset \PP V$ is a zero-dimensional subscheme of degree $q$.
  Let $d\ge q-1$ be an integer.
  Then the following conditions are equivalent:
  \begin{enumerate}
    \item \label{item_equivalence_Gorenstein}
        $Q$ is (locally) Gorenstein;
    \item \label{item_equivalence_dim_Hilb_0}
        $\dim Hilb_{q-1} Q = 0$;
    \item \label{item_equivalence_span}
        $\langle v_d (Q) \rangle  \ne \bigcup_{Q' \subsetneqq Q}{\langle v_d (Q') \rangle}$.
  \end{enumerate}
\end{lemma}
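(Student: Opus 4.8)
The plan is to prove \ref{item_equivalence_Gorenstein}$\Leftrightarrow$\ref{item_equivalence_dim_Hilb_0} by a local socle computation, and \ref{item_equivalence_dim_Hilb_0}$\Leftrightarrow$\ref{item_equivalence_span} by translating the union of spans into a union of hyperplanes and running a covering dichotomy.

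First I would decompose $\ccO_Q = \prod_i A_i$ according to the finitely many points of $\operatorname{Supp} Q$, each $A_i$ being Artinian local with maximal ideal $\mathfrak m_i$. A subscheme $Q'\subsetneq Q$ of degree $q-1$ corresponds to a one-dimensional ideal $\CC s\subseteq \ccO_Q$; since a nonzero nilpotent cannot rescale $s$, such an $s$ must be killed by the maximal ideal of a single factor, i.e.\ $s\in\operatorname{socle}(A_i)$ for exactly one $i$. This identifies $Hilb_{q-1}(Q)$ with $\bigsqcup_i \PP(\operatorname{socle}(A_i))$ on closed points, so $\dim Hilb_{q-1}(Q)=\max_i(\dim_{\CC}\operatorname{socle}(A_i)-1)$. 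Condition \ref{item_equivalence_dim_Hilb_0} thus holds iff every local socle is one-dimensional, which is exactly the statement that each $A_i$, and hence $Q$, is Gorenstein, namely \ref{item_equivalence_Gorenstein}. The socle bookkeeping here is routine.

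Next I would set up the geometry for \ref{item_equivalence_span}. By Lemma~\ref{lemma_dimension_of_linear_span} (using $d\ge q-1$) the span $\langle v_d(Q)\rangle$ is a $\PP^{q-1}$, every proper subscheme has strictly smaller span, and any $Q''\subsetneq Q$ is contained in some degree $(q-1)$ subscheme $Q'$ (extend a composition series through the socle), whence $\langle v_d(Q'')\rangle\subseteq\langle v_d(Q')\rangle$. Therefore the right-hand side of \ref{item_equivalence_span} equals $\bigcup_{Q'\in Hilb_{q-1}(Q)}H_{Q'}$, where $H_{Q'}:=\langle v_d(Q')\rangle$ is a hyperplane in $\PP^{q-1}$. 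I would then introduce the map $\iota\colon Hilb_{q-1}(Q)\to(\PP^{q-1})^{\vee}$ sending $Q'$ to the point dual to $H_{Q'}$.

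The crux, and the step I expect to be the main obstacle, is to control $\iota$ and then apply the covering dichotomy. Injectivity of $\iota$ follows from the Main Lemma~\ref{lemma_p_is_in_span_of_intersection} in the sharp form used in Corollary~\ref{cor_R_subset_Q}: since $\deg(Q_1'\cup Q_2')\le q$ and $d\ge q-1$, one obtains $\langle v_d(Q_1')\rangle\cap\langle v_d(Q_2')\rangle=\langle v_d(Q_1'\cap Q_2')\rangle$, so equal hyperplanes force $Q_1'\cap Q_2'$ to have degree $q-1$ and hence $Q_1'=Q_2'$. As $Hilb_{q-1}(Q)$ is projective, $\iota$ is finite onto its closed image and $\dim\im\iota=\dim Hilb_{q-1}(Q)$. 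Now if \ref{item_equivalence_dim_Hilb_0} holds, then $\im\iota$ is finite and finitely many hyperplanes cannot cover $\PP^{q-1}$ over the infinite field $\CC$, so \ref{item_equivalence_span} holds; conversely, if \ref{item_equivalence_dim_Hilb_0} fails, then $\im\iota$ is a positive-dimensional closed subvariety of $(\PP^{q-1})^{\vee}$, hence meets every hyperplane, which dually says that every point of $\PP^{q-1}$ lies on some $H_{Q'}$, so the union is all of $\PP^{q-1}$ and \ref{item_equivalence_span} fails. The delicate points are the injectivity of $\iota$ (which needs the sharp degree bound so that the Main Lemma applies with only $d\ge q-1$) and the standard but essential fact that a positive-dimensional projective variety meets every hyperplane.
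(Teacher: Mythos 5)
Your proof is correct and follows the same two-part architecture as the paper's: a socle computation for \ref{item_equivalence_Gorenstein}$\Leftrightarrow$\ref{item_equivalence_dim_Hilb_0} (your product decomposition $\ccO_Q=\prod_i A_i$ is just the paper's reduction to connected components made explicit, and your identification of $Hilb_{q-1}Q$ with the projectivized socles matches the paper's $Hilb_{q-1} Q = \PP(\Hom_{\ccO_Q}(\BC,\ccO_Q))$), and a dichotomy on $\dim Hilb_{q-1}Q$ for \ref{item_equivalence_dim_Hilb_0}$\Leftrightarrow$\ref{item_equivalence_span}, after cutting the union down to colength-one subschemes. Two details differ in the second part. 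Where the paper merely asserts that distinct subschemes of $Q$ have distinct spans, you derive this injectivity from the Main Lemma in its sharp zero-dimensional form; this is legitimate and is exactly how the paper itself deploys the Main Lemma in Corollary~\ref{cor_R_subset_Q} (here $\deg(Q_1'\cup Q_2')\le \deg Q = q$, so Gotzmann gives $q$-regularity of all four schemes involved and the span-intersection formula holds already for $d\ge q-1$), so your argument actually justifies a step the paper leaves as an assertion. Second, for the implication from failure of \ref{item_equivalence_dim_Hilb_0} to failure of \ref{item_equivalence_span}, the paper argues directly that a positive-dimensional projective family of distinct $(q-2)$-planes sweeps out a closed subset of dimension at least $q-1$ inside the irreducible $(q-1)$-dimensional span, hence all of it; you dualize and use that a positive-dimensional closed subvariety of $(\PP^{q-1})^{\vee}$ meets every hyperplane. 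These are dual formulations of the same fact; your version avoids having to bound the dimension of the swept union (which in the paper implicitly requires the distinctness of the planes), while both versions rest on the same suppressed standard inputs, namely projectivity of $Hilb_{q-1}Q$ and the fact that $Q'\mapsto\langle v_d(Q')\rangle$ is a morphism to $(\PP^{q-1})^{\vee}$.
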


Schemes satisfying \ref{item_equivalence_Gorenstein} are studied intensively,
see for instance \cite[Chap. 21]{eisenbud}, \cite{CasnatiNotari}, \cite{Casnati_Notari_On_Gorenstein_locus},
 \cite{MR1735271}.
Condition~\ref{item_equivalence_span} says that $Q$ is minimal in the following sense:
for a general $p \in \langle v_d (Q )\rangle $ there exists no smaller $Q' \subset Q$ such that
$p \in \langle v_d (Q' ) \rangle$.
We thank Frank-Olaf Schreyer and Vivek Shende for (independently) pointing out to us the equivalence of
\ref{item_equivalence_Gorenstein} and \ref{item_equivalence_dim_Hilb_0}.

\begin{proof}
  Conditions \ref{item_equivalence_Gorenstein} and \ref{item_equivalence_dim_Hilb_0}
  are local, i.e., they hold for $Q$ if and only if they hold for all connected components of $Q$.
  Thus to prove the equivalence of \ref{item_equivalence_Gorenstein} and \ref{item_equivalence_dim_Hilb_0}
   we may assume $Q$ is supported at one point and hence the structure ring $\ccO_Q$
  is a local algebra of finite dimension over $\CC$.

  Let $\mathfrak{m}$ be the maximal ideal in $\ccO_Q$
   and  $\mathfrak{s}$ be the \emph{socle} of $\ccO_Q$, that is
   the annihilator of $\mathfrak{m}$ in $\ccO_Q$ (see \cite[p.~522]{eisenbud}).
  Now a subscheme of length $n$ is defined by an ideal of dimension
  $q - n$; consequently,
  $Hilb_{q-1} Q = \PP \bigl(\Hom_{\ccO_Q}( \BC,\ccO_Q) \bigr)$.
  Here $\BC = \ccO_Q/\mathfrak{m}$,
   thus the image of any homomorphism $\BC \to \ccO_Q$ is contained in the socle $\mathfrak{s}$.
  On the other hand, any $f \in \mathfrak{s}$ determines a homomorphism $\BC \to \ccO_Q$,
   by sending $1 \mapsto f$.
  Thus $\dim Hilb_{q-1} Q =0$, if and only if
   $\dim \Hom_{\ccO_Q}( \BC,\ccO_Q)=1$ if and only if
   the socle of $\ccO_Q$ is one-dimensional, if and only if $Q$ is Gorenstein
   (see \cite[Prop.~21.5a\&c]{eisenbud}).

  To prove the equivalence of \ref{item_equivalence_dim_Hilb_0} and \ref{item_equivalence_span}
   note that:
  \renewcommand{\theenumi}{(\alph{enumi})}
  \renewcommand{\labelenumi}{\theenumi}
   \begin{enumerate}
    \item $Hilb_{q-1} Q$ is a projective scheme;
    \item if $Q'' \subsetneqq Q$ is a non-trivial subscheme,
           then there exists a subscheme $Q'$ of degree $q-1$ such that
			    $Q'' \subset Q' \subset Q$
          and thus $\bigcup_{Q' \subsetneqq Q}{\langle v_d (Q') \rangle}$
          is the same if we restrict the union to only $Q'$ of degree $q-1$;
    \item if $Q', Q'' \subset Q$ are two subschemes and $Q' \ne Q''$,
          then $\langle v_d (Q') \rangle \ne  \langle v_d (Q'') \rangle$,
          because $d \ge q-1$, see Lemma~\ref{lemma_dimension_of_linear_span};
    \item Since $d\ge q-1$, for all $Q' \subset Q$ (including $Q' = Q$),
           we have $\dim \langle v_d (Q') \rangle = \deg Q' -1$.
  \end{enumerate}
  \renewcommand{\theenumi}{(\roman{enumi})}
  \renewcommand{\labelenumi}{\theenumi}
  Thus, if $\dim Hilb_{q-1} Q =0$,
    then $\dim \bigcup_{Q' \subsetneqq Q}{\langle v_d (Q') \rangle}$ is $q-2$,
    so this union cannot be equal to $\langle v_d (Q') \rangle$.

  On the contrary, if  $\dim Hilb_{q-1} Q > 0$,
    then $\bigcup_{Q' \subsetneqq Q}{\langle v_d (Q') \rangle}$ is swept out by
  a projective, positive dimensional family of distinct linear subspaces of dimension $q-2$,
  thus it is closed and of dimension at least $q-1$.
  Since it is always contained in $\langle v_d (Q) \rangle$ (which is irreducible and of dimension $q-1$),
   it follows that
  \[
    \bigcup_{Q' \subsetneqq Q}{\langle v_d (Q') \rangle} = \langle v_d (Q) \rangle.
  \]
\end{proof}

\begin{proof}[Proof of Proposition~\ref{prop_eis_reduction_to_PV_most_general}]
  Suppose
  \[
     p \in \srdx:= \reduced{\bigl(\langle v_d(X)\rangle \cap \s_r(v_d(\BP V))\bigr)},
  \]
  so that by Lemma~\ref{drpoints} there exists a zero-dimensional smoothable subscheme $R \subset \BP V$
   of degree at most $r$,
  such that $p \in  \langle v_d(X) \rangle \cap \langle v_d(R) \rangle$.
  By Lemma~\ref{lemma_p_is_in_span_of_intersection},
    also $p \in \langle v_d (X \cap R) \rangle$.
  If $X \cap R$ is smoothable in $X$,
    then $p \in \sigma_r(X)$ by Lemma~\ref{drpoints}.
  So suppose $Q$ is the minimal subscheme of $X \cap R$, such that $p \in \langle v_d (Q) \rangle$
    and set $q:=\deg Q$.

  By the minimality of $Q$,
    the hypotheses of Lemma~\ref{lemma_equivalence_of_Gorenstein}\ref{item_equivalence_span} hold for $Q$,
    thus $Q$ is Gorenstein by Lemma~\ref{lemma_equivalence_of_Gorenstein}\ref{item_equivalence_Gorenstein}.
  Now either $Q = R$, and then it is smoothable in $X$ by \ref{item_condition_on_smoothability_of_R},
    or $q < r$, and then $Q$ is smoothable in $\PP V$ by \ref{item_condition_on_dim_Hilb_Q}.
  Note that condition \ref{item_condition_on_smoothability_of_R} also holds  for $R$ replaced with $Q \cup \setfromto{x_1}{x_{r-q}}$,
    where the  $x_i$ are distinct points, disjoint from the support of $Q$.
  Thus $Q$ is smoothable in $X$.
\end{proof}

The   following Lemmas determine  situations when conditions~\ref{item_condition_on_smoothability_of_R}
and \ref{item_condition_on_dim_Hilb_Q} are satisfied.

\begin{lemma}\label{lemma_when_smoothability_holds}
  Suppose $X\subset \BP V$ is  a  reduced scheme  and $r=2$.
         If   $T^{\star}_x X = \BP \hat {T}_x X$ for all $x\in X$
           (for instance, $X$ has at worst hypersurface singularities),
         then condition~\ref{item_condition_on_smoothability_of_R} is satisfied.
\end{lemma}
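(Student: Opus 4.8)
The plan is to reduce Condition~\ref{item_condition_on_smoothability_of_R} in the case $r=2$ to a statement about a single length-two scheme supported at one point, to encode such a scheme by a tangent direction, and then to identify its smoothability inside $X$ with membership of the associated line in the tangent star $T^{\star}_xX$.

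First I would use that smoothability of a zero-dimensional scheme is a local property, so it suffices to treat each connected component of $R$ separately. A degree-two subscheme $R\subset X$ is either a pair of distinct reduced points (tautologically smoothable in $X$) or a length-two scheme supported at a single point $x\in X$; I therefore assume $R$ is of the latter type. Note also that every length-two scheme is smoothable in $\BP V$, so the hypothesis ``smoothable in $\BP V$'' is automatic here and carries no information. Such an $R$ is determined by a tangent direction, that is, by a one-dimensional subspace of the Zariski tangent space $T_xX$, and since $R\subset X$ this direction lies in $T_xX$; its span $\langle R\rangle$ is accordingly a line $\ell\subset\BP\hat T_xX$ through $x$. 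Conversely, every line $\ell\subset\BP\hat T_xX$ through $x$ is the span of a \emph{unique} length-two subscheme of $X$ supported at $x$.

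The key step is the equivalence: $R$ is smoothable in $X$ if and only if $\ell\subset T^{\star}_xX$. For the forward direction, a smoothing of $R$ inside $X$ provides arcs $x(t),y(t)\in X$, distinct for $t\neq0$ with $x(0)=y(0)=x$, whose flat limit is $R$; the secant lines $\BP^1_{x(t),y(t)}$ then converge in $\BG(1,\BP V)$ to $\langle R\rangle=\ell$, so $\ell$ is by definition a limiting secant and $\ell\subset T^{\star}_xX$. For the converse I would exploit that $T^{\star}_xX$ is a union of lines through $x$: if $\ell\subset T^{\star}_xX$, pick $p\in\ell$ with $p\neq x$; then $p$ lies on some limiting secant $\ell'$, which passes through $x$, whence $\ell'=\langle x,p\rangle=\ell$. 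Thus $\ell$ itself is a limit of secants $\BP^1_{x(t),y(t)}$ with $x(t),y(t)\in X$ converging to $x$, and by Lemma~\ref{lemma_criterion_for_flatness} the flat limit of $\set{x(t),y(t)}$ is a genuine degree-two subscheme of $X$; being supported at $x$ and contained in $\lim\BP^1_{x(t),y(t)}=\ell$, it must equal $R$ by the uniqueness above. Granting $T^{\star}_xX=\BP\hat T_xX$, every such $\ell$ lies in $T^{\star}_xX$ (at smooth points this equality holds automatically, in agreement with Lemma~\ref{lemma_when_smoothability_holds_case_smooth}), so every $R$ is smoothable in $X$ and Condition~\ref{item_condition_on_smoothability_of_R} holds.

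Finally, for the parenthetical ``for instance'' I would verify that a hypersurface singularity forces $T^{\star}_xX=\BP\hat T_xX$. Writing $X=\set{f=0}$ locally in a smooth $(n+1)$-fold with $x=0$, singularity of $x$ gives $\ud f(x)=0$, so $f=f_m+\cdots$ with $m\ge2$ and $\BP\hat T_xX$ is the full ambient $\BP^n$. For a generic direction $v$ with $f_m(v)\neq0$, approaching $x$ along a generic line $w=\tau u$ splits the order-$m$ root of $f(sv)$ at $s=0$ into roots $s_i(\tau)\to0$, producing two distinct points $\tau u+s_1v,\ \tau u+s_2v\in X$ whose secant direction is $v$; hence $\ell_v\subset T^{\star}_xX$, and since the tangent star is closed this extends to all $v$, giving $T^{\star}_xX=\BP\hat T_xX$. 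I expect this last verification to be the main obstacle: the delicate point is that the clustering roots $s_i(\tau)$ are genuinely distinct, which I would derive from reducedness of $X$ (so that the discriminant of $f$ in the variable $s$ is not identically zero). The structural equivalence of the previous paragraph is, by contrast, the conceptual heart of the argument.
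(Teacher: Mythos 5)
Your proof is correct and follows essentially the same route as the paper's: reduce to the case of a double point supported at a single $x\in X$, identify such a scheme with a line $\ell\subset\BP\hat{T}_xX$ through $x$, and observe that smoothability in $X$ is, via the definition of the tangent star, equivalent to $\ell\subset T^{\star}_xX$. The only differences are that you unpack the ``by definition'' equivalence in both directions (the paper states it in one sentence) and that you additionally verify the parenthetical claim that hypersurface singularities force $T^{\star}_xX=\BP\hat{T}_xX$, which the paper asserts without proof; both additions are sound.
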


\begin{proof}
  A scheme $R$ of degree $2$ is either
    a disjoint union of $2$ points (which is trivially smoothable)
  or $R$ is a double point supported at $x \in X$.
  In the second case the result follows from
    Proposition~\ref{prop_tangent_star}\ref{item_nonreduced_deg_2} and \ref{item_nonreduced_deg_2_smoothable}.
\end{proof}

\begin{lemma}\label{lemma_when_condition_on_Hilb_Q_holds}
  Suppose $X\subset \BP V$ is a subscheme and $r \le 11$
    or $X$ can be locally embedded into a smooth $3$-fold.
  Then condition~\ref{item_condition_on_dim_Hilb_Q} is satisfied.
\end{lemma}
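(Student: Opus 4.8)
The plan is to reduce condition~\ref{item_condition_on_dim_Hilb_Q} to a purely local question about Artinian Gorenstein algebras and then feed in known smoothability theorems, treating the two hypotheses ($r\le 11$ and local embeddability in a smooth $3$-fold) separately. First I would localize: since $\BP V$ is smooth, a zero-dimensional $Q\subset \BP V$ is smoothable in $\BP V$ if and only if it is abstractly smoothable --- this is the comparison of embedded and abstract deformation theory already used in the proof of Lemma~\ref{lemma_when_smoothability_holds_case_smooth} (see \cite{artin_deform_of_sings}, \cite{erman_velasco_syzygetic_smoothability}). Abstract smoothability is checked one connected component at a time, and each connected component of a Gorenstein scheme is again Gorenstein; so it suffices to show that every local Artinian Gorenstein $\CC$-algebra occurring as a component of such a $Q$ is smoothable. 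Such a component has length at most $q<r$, and in the threefold case embedding dimension at most $3$.

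For the case $r\le 11$ each component is a Gorenstein algebra of length at most $10$. Here I would quote the smoothability results for the Gorenstein locus of small punctual Hilbert schemes: every zero-dimensional Gorenstein scheme of degree at most $10$ is smoothable (see \cite{CasnatiNotari}, \cite{Casnati_Notari_On_Gorenstein_locus}, and the general theory in \cite{MR1735271}). This is precisely what fixes the threshold at $r\le 11$, and the bound is essentially sharp in spirit: by \cite{MR1735271} there exist non-smoothable Gorenstein schemes already in degree $14$, which is exactly why condition~\ref{item_condition_on_dim_Hilb_Q} must fail once $r\ge 15$.

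For the case where $X$ embeds locally into a smooth threefold, each component of $Q$ is a Gorenstein algebra of embedding dimension at most $3$, and there is no degree bound to worry about. If the embedding dimension is at most $2$, the component lies in a smooth surface, and since the Hilbert scheme of points on a smooth surface is irreducible (Fogarty) the component is smoothable even without the Gorenstein hypothesis. If the embedding dimension equals $3$, I would use the Buchsbaum--Eisenbud structure theorem: a codimension-$3$ Gorenstein ideal is generated by the submaximal Pfaffians of an alternating matrix of forms, and deforming this matrix to a generic alternating one keeps the scheme Gorenstein of the same degree while rendering the generic member reduced, exhibiting $Q$ as a flat limit of reduced schemes.

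The hard part will be the two smoothability inputs themselves: the sharp low-degree statement (Gorenstein degree $\le 10$), which depends on the delicate classification of short Gorenstein algebras, and the codimension-$3$ case, which rests on the Pfaffian structure theorem and a deformation argument within the irreducible family of Pfaffian ideals. Everything else --- the localization, the embedded-versus-abstract comparison, and the bookkeeping of degree and embedding dimension --- is routine.
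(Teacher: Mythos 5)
Your proposal is correct and follows essentially the same route as the paper, whose proof consists precisely of: the reduction $r\le 11\Rightarrow q\le 10$ together with the citation of \cite{CasnatiNotari} for smoothability of zero-dimensional Gorenstein schemes of degree at most $10$; the observation that local embeddability of $X$ in a smooth $3$-fold bounds the embedding dimension of $Q$ by $3$; the citation of \cite[Cor.~2.4]{Casnati_Notari_On_Gorenstein_locus} for smoothability of Gorenstein schemes embeddable in $\PP^3$; and the transfer of smoothability to $\BP V$ via Lemma~\ref{lemma_when_smoothability_holds_case_smooth}. The only divergence is that where the paper invokes \cite[Cor.~2.4]{Casnati_Notari_On_Gorenstein_locus} as a black box, you sketch its underlying proof (Fogarty's irreducibility of the Hilbert scheme of points on a surface for embedding dimension $\le 2$, and the Buchsbaum--Eisenbud Pfaffian structure theorem with a generic deformation of the alternating matrix for embedding dimension $3$); this is the standard argument behind the cited result and introduces no gap, so your write-up simply re-derives an ingredient the paper cites.
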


\begin{proof}
  If $r \le 11$, then $q\leq 10$, and zero-dimensional Gorenstein schemes of degree at most $10$ are smoothable,
    see the main theorem in \cite{CasnatiNotari}.
  If $X$ can be locally embedded into a smooth $3$-fold,
   then also the embedding dimension of any $Q\subset X$ is at most $3$ at each point,
   and a zero-dimensional Gorenstein scheme that can be embedded in $\PP^3$ is smoothable
   by \cite[Cor.~2.4]{Casnati_Notari_On_Gorenstein_locus}
   and thus $Q$ is also smoothable in $\PP V$ by Proposition~\ref{prop_smothable_in_X_iff_smoothable_in_Y}.
\end{proof}

Theorem  \ref{item_case_hypersurface_sings_r_le_2}
      follows  from Proposition~\ref{prop_eis_reduction_to_PV_most_general} as
  conditions \ref{item_condition_on_smoothability_of_R} and \ref{item_condition_on_dim_Hilb_Q} hold
    by Lemmas~\ref{lemma_when_smoothability_holds} and \ref{lemma_when_condition_on_Hilb_Q_holds}.
We also remark, that both conditions~\ref{item_condition_on_smoothability_of_R} and \ref{item_condition_on_dim_Hilb_Q} hold
  for any $r$ when $X$ is an irreducible curve with at most planar singularities,
  that is $X$ can be locally embedded into a smooth surface
  --- see \cite[Thm~5 and Cor.~7]{altman_iarrobino_kleiman_irreducibility_of_Jacobian}.

Here is another consequence of Corollary~\ref{cor_R_subset_Q}.

\begin{corollary}\label{uniquenesscorb}
 Suppose Condition~\ref{item_condition_on_dim_Hilb_Q} of Proposition~\ref{prop_eis_reduction_to_PV_most_general}
  holds for some $r$ and $X = \BP V$.
  Then for all $p\in \s_r(v_d(\BP V))$, $p\notin \s_{r-1}(v_d(\BP V))$ and $d \ge 2r-1$,
  the scheme $R$ of degree $r$ such that
  $p\in \langle v_d(R)\rangle$ is unique.
\end{corollary}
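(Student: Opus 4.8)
The plan is to reduce the whole statement to Corollary~\ref{cor_R_subset_Q}, the only extra input being that \emph{every} zero-dimensional scheme of degree $r$ whose Veronese span contains $p$ is automatically \emph{minimal} in the sense of that corollary. First I would record that at least one such scheme exists: since $p\in\s_r(v_d(\BP V))$, Lemma~\ref{drpoints} furnishes a smoothable $R$ of degree $\le r$ with $p\in\langle v_d(R)\rangle$, and its degree must be exactly $r$, for otherwise $\deg R\le r-1$ and smoothability would give $p\in\s_{r-1}(v_d(\BP V))$ (using $d\ge 2r-1\ge r-2$), contrary to hypothesis. Granting the minimality claim, uniqueness is immediate: if $R$ and $Q$ are two degree-$r$ schemes with $p$ in both Veronese spans, then $\deg(R\cup Q)\le 2r$, so $d\ge 2r-1\ge\deg(R\cup Q)-1$, and Corollary~\ref{cor_R_subset_Q} applied with the minimal scheme $R$ yields $R\subset Q$; as $\deg R=\deg Q=r$, this forces $R=Q$.

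The substance of the argument is therefore the minimality claim, and this is where Condition~\ref{item_condition_on_dim_Hilb_Q} (for $X=\BP V$) enters. I would argue by contradiction: suppose $R$ has degree $r$, $p\in\langle v_d(R)\rangle$, and $p\in\langle v_d(R')\rangle$ for some $R'\subsetneqq R$. Choose a subscheme $Q\subseteq R$ of smallest degree $q$ with $p\in\langle v_d(Q)\rangle$; then $q<r$, and by minimality of the degree $p\notin\langle v_d(Q')\rangle$ for every $Q'\subsetneqq Q$. In particular $p$ lies in $\langle v_d(Q)\rangle$ but not in $\bigcup_{Q'\subsetneqq Q}\langle v_d(Q')\rangle$, so these two sets differ; this is precisely condition~\ref{item_equivalence_span} of Lemma~\ref{lemma_equivalence_of_Gorenstein}, whose hypothesis $d\ge q-1$ holds since $d\ge 2r-1$. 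Hence $Q$ is Gorenstein by Lemma~\ref{lemma_equivalence_of_Gorenstein}\ref{item_equivalence_Gorenstein}.

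To finish, $Q$ is a Gorenstein zero-dimensional subscheme of $\BP V$ of degree $q<r$, so Condition~\ref{item_condition_on_dim_Hilb_Q} guarantees that $Q$ is smoothable in $\BP V$. Applying Lemma~\ref{drpoints} to this smoothable $Q$ (again $d\ge q-1$) gives $p\in\s_q(v_d(\BP V))\subseteq\s_{r-1}(v_d(\BP V))$, contradicting $p\notin\s_{r-1}(v_d(\BP V))$; this establishes the minimality claim and with it the corollary. I expect the one genuinely delicate point to be this last reduction: the hypothesis of Proposition~\ref{prop_eis_reduction_to_PV_most_general} speaks only about \emph{general} Gorenstein schemes, so the key is that minimality of $Q$ for the \emph{specific} point $p$ is exactly what activates the Gorenstein criterion of Lemma~\ref{lemma_equivalence_of_Gorenstein}, thereby converting Condition~\ref{item_condition_on_dim_Hilb_Q} into the concrete smoothability statement needed for the contradiction. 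Everything else is bookkeeping of the degree and regularity bounds, all of which are comfortably met by $d\ge 2r-1$.
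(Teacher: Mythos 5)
Your proof is correct and takes essentially the same route as the paper's: minimality of the subscheme forces it to be Gorenstein via Lemma~\ref{lemma_equivalence_of_Gorenstein}, Condition~\ref{item_condition_on_dim_Hilb_Q} then gives smoothability, which contradicts $p\notin\s_{r-1}(v_d(\BP V))$, and Corollary~\ref{cor_R_subset_Q} delivers uniqueness. The only difference is one of exposition: you spell out the Gorenstein step (and the existence of a degree-exactly-$r$ witness) that the paper's terser proof leaves implicit.
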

The hypotheses of Corollary \ref{uniquenesscorb}   hold
 in all dimensions when $r\leq 11$ and for all $r$ when $\dim \PP V \le 3$
 by Lemma~\ref{lemma_when_condition_on_Hilb_Q_holds}.
It fails to hold for when both $r$ and $\tdim(\PP V)$ are large,
  see comments and references after Proposition~\ref{prop_eis_reduction_to_PV_most_general}
\begin{proof}
  Let $R \subset \BP V$ be a smoothable zero-dimensional scheme of degree $r$ such that $p \in \langle v_d(R) \rangle$.
  Suppose $R' \subset R$ is a subscheme such that $p \in \langle v_d(R') \rangle$
    and suppose $R'$ is minimal with this property.
  Condition~\ref{item_condition_on_dim_Hilb_Q} implies that $R'$ is smoothable.
  Since $p\notin \s_{r-1}(v_d(\BP V))$, we have $R' = R$ and $R$ is minimal.
  Thus by Corollary~\ref{cor_R_subset_Q}, the scheme $R$ is unique as claimed.
\end{proof}

\subsection{Explicit examples of curves}\label{section_counterexamplesc}

Let $X\subset \BP V$ be a  reduced scheme.
Recall the incidence correspondence $S_X$ from \S\ref{moreprecisesect} and note that $\s_2(X)=\mu (S_X)$, and
$\tdim T^{\star}_xX\leq 2\tdim X$.

Consider the case $X$ is the union of two lines that intersect in a point $y$. Then
$T^{\star}_yX=\BP \hat T_yX$ is a $\pp 2$.

Now let $X$ be the union of three lines in $\pp 3=\BP V$ that intersect in a point $y$ and
are otherwise in general linear position, e.g., the lines corresponding
to coordinate axes in affine space. (That is, give $\pp 3$ coordinates
$[x_0,x_1, x_2,x_3]$ and take the union of lines
which is
given by $x_ix_j=0$ for all
$1\leq i<j\leq 3$,
  i.e., each line is $x_i=x_j=0$ for some $1\leq i<j\leq 3$.)
Then $T^{\star}_yX$ is the union of three $\pp 2$'s, but $ \langle T^{\star}_yX\rangle =\pp 3$.
Consider $v_d(X)$, for $d \ge 3$.
  If we label the coordinates in $S^dV$ in order
$x_0^d,x_0^{d-1}x_1,x_0^{d-1}x_2,x_0^{d-1}x_3,x_0^{d-2}x_1^2,\hdots$,   then
$\langle T^{\star}_{v_d(y)}v_d(X)\rangle =\langle T^{\star}_{v_d(y)}v_d (\PP V)\rangle $ is the span of the
first four coordinate points and
$T^{\star}_{v_d(y)}v_d(X)$ is the union of the
$\pp 2$'s spanned by the duals of
$x_0^d,x_0^{d-1}x_i,x_0^{d-1}x_j$, $1\leq i<j\leq 3$.
Consider the point $z=[1,1,1,1,0\hd 0]\in
\langle T^{\star}_{v_d(y)}v_d(X)\rangle $.
It  lies in $\s_2(v_d(\BP V))$,
but it is not in $\s_2(v_d(X))$. To prove this, note that the scheme $R$
of degree two defining
  $z$ as an element of $\s_2(v_d(\BP V))$ is unique by
  Corollary \ref{uniquenesscorb}, but $R$ is obtained
by $x_0^d$ and the tangent vector in the direction of
$x_0^{d-1}(x_1+x_2+x_3)$ and the latter is not in
$T^{\star}_{v_d(y)}v_d(X)$.
 So $R$ is not smoothable in $X$ by Proposition~\ref{prop_tangent_star}\ref{item_nonreduced_deg_2_smoothable}.

 Thus $\s_2(v_d(X))$ is not defined by the equations inherited
 from $\s_2(v_d(\pp 3))$. However, it is defined by cubics, namely
 the cubics inherited from  $\s_2(v_d(\pp 3))$ and those defining the
 union of the three $\pp 2$'s in $\BP S^dV$.

Finally let $X_k$ be the union of $k\geq 4$ lines in $\pp 3=\BP V$ that intersect
 at a point $y$ but are otherwise in general linear position.
 Then $T^{\star}_yX_k$ is a union of $\binom k2$ $\pp 2$'s,
and thus is a hypersurface of degree $\binom k2$ in $\langle T^{\star}_yX_k\rangle = \BP V$.
 As above,  $T^{\star}_{v_d(y)}v_d(X_k)$ is also   a union
 of $\binom k2$ $\pp 2$'s, namely the linear spaces
 whose tangent spaces are the images of the tangent
 spaces to the $\pp 2$'s in $T^{\star}_yX_k$ under the
 differential of the Veronese.
 And as above,
 $\langle T^{\star}_{v_d(y)}v_d(X_k)\rangle = \langle T^{\star}_{v_d(y)}v_d(\pp 3)\rangle
   = \PP \hat T_{v_d(y)} v_d (\PP^3)$ will
 be the $\pp 3\subset \BP S^dV$ that they span
 (see Proposition~\ref{prop_tangent_star}\ref{item_star_is_mapped_to_star}),
and a general point of $\langle T^{\star}_{v_d(y)}v_d(X_k)\rangle $ will not be in $\s_2(v_d(X_k))$.
 This provides an explicit construction of a sequence of
  reduced schemes  such that the ideal
 of $\s_2(v_d(X_k))$ has generators in  degree  $\binom k 2$
 for all $d\geq 3$.

 To obtain irreducible varieties, it is sufficient that they locally
 look like the above example near a point $y$. To be explicit,
 take for example,  a rational normal curve $C\subset \pp n$
 (with $n = k+2$)
 and a linear subspace $W \simeq \BP^{k-1} \subset \BP^n$, spanned by $k$ general points on $C$.
 Then choose a general hyperplane $H \simeq \BP^{k-2} \subset W$
 and let $\pi \colon \BP^n \setminus H \to \BP ^{3}$ be the projection away from $H$.
 Then $W$ is mapped to a single point and $X := \pi(C)$
 is a degree $n$ irreducible curve with singularity isomorphic to $k$ general intersecting lines
 and for any $d \ge 3$, one needs equations of degree at least $\binom k 2$
 to define $\sigma_2 (v_d (X))$, even set-theoretically.

In the next section we show  that  for $d$ sufficiently large, the same examples work for all $r$.

In \S\ref{section_counterexamples_complete_intersection} we present further counter-examples,
 which are complete intersections.

\subsection{Proof of Theorem~\ref{eis_does_not_hold_for_singular_case_big_r} }\label{section_counterexamplesp}


Recall that in Theorem~\ref{eis_does_not_hold_for_singular_case_big_r}
 we give conditions on singularities of $X$ that force $\sigma_r(v_d (X)) \ne  \srdx$
 and conditions that force some of the defining equations of
 $\sigma_r(v_d (X))$ to be of high degree.
In the proof we intersect both $\sigma_r(v_d (X))$ and  $\srdx$ with a linear space $W$,
which for $r=2$ is just the projective tangent space at a sufficiently singular point $\BP \hat T_x v_d(X)$.
We show  there is enough of difference between $\sigma_r(v_d (X))\cap W$ and  $ \srdx \cap W$ to prove the theorem.

In the first lemma below we describe $\srdx \cap W $, while in the next lemma we describe $\sigma_r(v_d (X))\cap W$.

\begin{lemma}\label{lemma_T_is_contained_in_varsigma_case_big_r}
  Let  $X \subset \BP V$ be a variety,
    let  $r \ge 2$,  let $d \ge 1$, and let
    $x, \fromto{y_1}{y_{r-2}} \in v_d (X)$ be $r-1$ disjoint points.
  Then
  \[
     W:=\langle \BP \hat T_x v_d(X) \cup \setfromto{y_1}{y_{r-2}}
\rangle \subset \srdx.
  \]
\end{lemma}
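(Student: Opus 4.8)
The plan is to show that every point $p \in W$ lies in $\srdx$, i.e.\ that $p$ belongs to both $\sigma_r(v_d(\BP V))$ and $\langle v_d(X)\rangle$. The second containment is the easy one: since $x, y_1, \dots, y_{r-2}$ all lie on $v_d(X)$, and $\BP \hat T_x v_d(X) \subset \langle v_d(X)\rangle$ (the embedded projective tangent space at a point of a variety is contained in the linear span of that variety), the whole span $W$ is contained in $\langle v_d(X)\rangle$. So the entire content is the first containment, $W \subset \sigma_r(v_d(\BP V))$, which I would establish by exhibiting, for a general (hence every, by closedness of $\sigma_r$) point of $W$, a zero-dimensional scheme $R \subset \BP V$ of degree at most $r$ with $p \in \langle v_d(R)\rangle$, and then invoking Lemma~\ref{drpoints}.

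The key idea is to build $R$ out of the $r-1$ given points together with one length-two scheme absorbing the tangent directions. First I would note that $\BP \hat T_x v_d(X) = \langle v_d(R_x)\rangle$ where $R_x$ ranges over degree-two schemes supported at $x$: more precisely, a general point of $\BP \hat T_x v_d(X)$ lies on the span $\langle v_d(\tilde{R})\rangle$ for a suitable length-two subscheme $\tilde R \subset \BP V$ supported at the preimage of $x$ (a tangent vector at a point corresponds to such a double point, and its $d$-th Veronese span is the embedded tangent line). Thus a general element of $W = \langle \BP \hat T_x v_d(X) \cup \{y_1, \dots, y_{r-2}\}\rangle$ lies in $\langle v_d(R)\rangle$ where $R := \tilde R \cup \{x_1, \dots, x_{r-2}\}$ with $x_i$ the preimages of $y_i$ in $\BP V$; this $R$ is a zero-dimensional scheme of degree $2 + (r-2) = r$. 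By Lemma~\ref{lemma_dimension_of_linear_span} (since $d \ge r-1$) the pieces are linearly independent after the Veronese embedding, so $\langle v_d(R)\rangle$ has dimension $r-1$ and its points have $v_d(\BP V)$-border-rank at most $r$. Since $R$ is smoothable in $\BP V$ (a degree-two scheme is trivially smoothable, and adding reduced points keeps smoothability), Lemma~\ref{drpoints} yields $p \in \sigma_r(v_d(\BP V))$ for general $p$, hence for all $p \in W$ by taking closure.

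The main obstacle I expect is handling the points of $\BP \hat T_x v_d(X)$ that are \emph{not} of the form $\langle v_d(\tilde R)\rangle$ for a single double point --- i.e.\ showing the union of such tangent-line spans actually sweeps out (a dense subset of) the full projective tangent space, so that the generic-point argument covers a dense subset of $W$. This is where the relation between $\BP \hat T_x v_d(X)$ and the Zariski tangent space $T_x X$ must be used carefully: the embedded tangent space is swept by the $d$-th Veronese images of the tangent directions, and one must check that every direction in $\hat T_x X$ arises from an honest length-two subscheme of $\BP V$ supported at the relevant point. Once the generic point of $W$ is shown to be a limit of configurations of $r$ points of $\BP V$, the closedness of $\sigma_r$ promotes the statement from the generic point to all of $W$, completing the proof. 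The containment $W \subset \langle v_d(X)\rangle$ requires no genericity and holds for the whole linear space directly.
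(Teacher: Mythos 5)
Your argument is correct in its essential mechanism, and that mechanism is the same one underlying the paper's (much shorter) proof: tangent directions at $v_d(x)$ are spans of length-two schemes supported at the preimage of $x$ in $\BP V$, hence lie in $\sigma_2(v_d(\BP V))$, and adjoining the $r-2$ points $\fromto{y_1}{y_{r-2}}$ lands everything in $\sigma_r(v_d(\BP V))$. The paper packages this without any schemes at all: it observes $\BP \hat T_x v_d(X) \subset \BP \hat T_x (v_d(\BP V)) \subset \sigma_2(v_d (\BP V))$, so that $W$, being the span of a subset of $\sigma_2(v_d(\BP V))$ and $r-2$ points of $v_d(\BP V)$, lies in $\sigma_r(v_d(\BP V))$ (the join of $\sigma_2$ with $r-2$ copies of the variety); together with $W \subset \langle v_d(X)\rangle$ and the reducedness of $W$, this is the whole proof, valid for every $d \ge 1$.

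This brings out the one genuine defect in your write-up: the lemma assumes only $d \ge 1$, but your invocation of Lemma~\ref{drpoints} (and of Lemma~\ref{lemma_dimension_of_linear_span} for linear independence) requires $d \ge r-1$, so as written you prove the statement only in that restricted range. The repair is exactly the paper's shortcut: you do not need $R$ to span a $\BP^{r-1}$ or to cite Lemma~\ref{drpoints}; the span of your double point $\tilde R$ is a tangent line to $v_d(\BP V)$, hence a limit of secant lines, hence inside $\sigma_2(v_d(\BP V))$ for every $d$, and the join property then puts $\langle z, \fromto{y_1}{y_{r-2}}\rangle$ inside $\sigma_r(v_d(\BP V))$ with no hypothesis on $d$. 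Separately, the ``main obstacle'' you flag is not an obstacle: since $\tilde R$ is only required to lie in $\BP V$ (not in $X$), and since, writing $x = v_d([u])$, one has $\hat T_x v_d(\BP V) = u^{d-1}\cdot V$ and $\hat T_x v_d(X) = u^{d-1}\cdot \hat T_{[u]} X$, \emph{every} point of $\BP \hat T_x v_d(X)$ is of the form $[u^{d-1}w]$ and so lies on $\langle v_d(\tilde R_w)\rangle$ for the double point $\tilde R_w$ at $[u]$ in the direction $w$ (or is $x$ itself). Thus the argument covers every point of $W$, not just a general one, and the closure step is unnecessary. The delicate question of which tangent directions arise from double points contained in $X$ is precisely what the tangent star measures, and it belongs to Lemma~\ref{lemma_sigma_r_vdX_has_only_tangent_star_case_big_r}, not here.
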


\begin{proof}
  By definition $\srdx  = \reduced{(\sigma_r(v_d(\BP V)) \cap \langle
v_d(X) \rangle)}$.
  Note that
  \[
    \BP \hat T_{x}v_d(X) \subset \BP \hat T_{x}  (v_d (\BP V)) \subset
\sigma_2(v_d(\BP V))
  \]
  and thus $W \subset \sigma_r(v_d(\BP V))$.
  Also $W \subset \langle v_d(X) \rangle$.
  Since $W$ is reduced,  the claim follows.
\end{proof}

\begin{lemma}\label{lemma_sigma_r_vdX_has_only_tangent_star_case_big_r}
  In the setup of Lemma~\ref{lemma_T_is_contained_in_varsigma_case_big_r},
    suppose  $d \ge 2r-1$.
  Let $p\in W$ be a point
    which is not contained  in $\langle \BP \hat T_x v_d(X)  \cup
(\setfromto{y_1}{y_{r-2}} \setminus \set{y_i})\rangle $
    for any $i$.
  Then $p\in \sigma_r (v_d(X))$ if and only if
    $p\in \langle x, z,\fromto{y_1}{y_{r-2}}\rangle$
    for some $z \in T^{\star}_{x}v_d(X)$.

   In other words $\reduced{(\sigma_r (v_d(X)) \cap W)}$
    consists of the cone over $T^{\star}_{v_d(x)}v_d(X)$ with vertex
$\langle \fromto{y_1}{y_{r-2}} \rangle$
    and possibly other components contained in
    $\langle \BP \hat T_x v_d(X)  \cup (\setfromto{y_1}{y_{r-2}}
\setminus \set{y_i})\rangle $
    for some $i$.
\end{lemma}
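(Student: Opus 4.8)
The plan is to prove the two implications of the \emph{if and only if} separately, and the ``in other words'' reformulation will then follow by letting $p$ range over $W$. The ``if'' direction is the easy one: if $p \in \langle x, z, \fromto{y_1}{y_{r-2}}\rangle$ for some $z \in T^{\star}_{x}v_d(X)$, then $z$ is by definition a limit of secant lines $\BP^1_{x(t),y(t)}$ with $x(t), y(t) \in v_d(X)$ converging to $v_d(x)$, so $z \in \sigma_2(v_d(X))$. Hence $p$ lies in the span of $z$ together with the $r-2$ points $y_1, \dotsc, y_{r-2} \in v_d(X)$; writing $z$ as a limit of two points on $v_d(X)$ realizes $p$ as a limit of points in $\langle r \text{ points of } v_d(X)\rangle$, giving $p \in \sigma_r(v_d(X))$.

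For the ``only if'' direction, which is the substantive part, I would invoke Lemma~\ref{drpoints}: since $d \ge 2r-1 \ge r-1$ and $p \in \sigma_r(v_d(X))$, there is a smoothable zero-dimensional scheme $R \subset X$ of degree at most $r$ with $p \in \langle v_d(R)\rangle$. The idea is to pin down the structure of $R$ using the uniqueness/minimality machinery already built. First I would pass to a minimal subscheme $R_0 \subseteq R$ with $p \in \langle v_d(R_0)\rangle$; by Corollary~\ref{cor_R_subset_Q} (applicable because $2r \le d+1$ controls $\deg(R_0 \cup Q) - 1$ for competing schemes $Q$), such a minimal $R_0$ is forced to be contained in any other scheme whose Veronese span contains $p$. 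The nondegeneracy hypothesis on $p$ — that it avoids $\langle \BP \hat T_x v_d(X) \cup (\setfromto{y_1}{y_{r-2}} \setminus \set{y_i})\rangle$ for every $i$ — is what prevents $R_0$ from having degree strictly less than $r$ or from ``missing'' any of the prescribed directions; this is where one shows each $y_i$ and the singular point $x$ must genuinely appear in the support of $R_0$.

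The crux is then a local analysis at the point $x$. Because $p \in W = \langle \BP \hat T_x v_d(X) \cup \setfromto{y_1}{y_{r-2}}\rangle$ and $R_0$ must account for $p$ using $r$ points of $X$, the ``extra'' degree concentrated at $x$ can be at most $2$, forcing the component of $R_0$ at $x$ to be either $x$ itself together with a tangent direction, or a length-two scheme. A length-two scheme supported at $x$ and smoothable in $X$ is, by the definition of the tangent star (compare the argument in Lemma~\ref{lemma_when_smoothability_holds}), exactly a double point whose tangent line lies in $T^{\star}_x v_d(X)$. Thus $p$ lies in the span of $\setfromto{y_1}{y_{r-2}}$ together with a point $z$ of $T^{\star}_{x}v_d(X)$, which is the desired conclusion. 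The main obstacle I anticipate is the bookkeeping needed to guarantee that the length concentrated at $x$ is precisely $2$ and not distributed among several tangent directions at $x$ — this requires combining the minimality of $R_0$, the dimension count $\deg R_0 \le r$, and the nondegeneracy of $p$ to rule out degenerate configurations, and it is precisely here that the hypothesis $d \ge 2r-1$ (ensuring the Veronese spans behave linearly and Corollary~\ref{cor_R_subset_Q} applies) is essential.

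Finally, to obtain the ``in other words'' statement, I would observe that as $p$ ranges over the nondegenerate locus of $W$, the conclusion says $\reduced{(\sigma_r(v_d(X)) \cap W)}$ is the cone over $T^{\star}_{v_d(x)}v_d(X)$ with vertex $\langle \fromto{y_1}{y_{r-2}}\rangle$; the degenerate points excluded by the hypothesis on $p$ contribute at most the stated extra components lying in the smaller spans $\langle \BP \hat T_x v_d(X) \cup (\setfromto{y_1}{y_{r-2}} \setminus \set{y_i})\rangle$, which I would handle by an easy induction on $r$ (peeling off one $y_i$ reduces to the same statement with $r$ replaced by $r-1$).
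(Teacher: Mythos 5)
Your proposal is correct and follows essentially the same route as the paper: the easy direction via $T^{\star}_{x}v_d(X)\subset\sigma_2(v_d(X))$ and the join structure, and the hard direction via Lemma~\ref{drpoints}, passing to a minimal scheme, invoking Corollary~\ref{cor_R_subset_Q} (which is exactly where $d\ge 2r-1$ enters), and identifying the length-two component at $x$ as smoothable in $X$, hence corresponding to a line in the tangent star, just as in Lemma~\ref{lemma_when_smoothability_holds}. The paper is only slightly more explicit at the step you flag as the "crux": it uses the nondegeneracy of $p$ to write down the competing scheme $Q=R_z\cup\setfromto{y_1}{y_{r-2}}$ directly, after first disposing of the degenerate case $p\in\langle x,\fromto{y_1}{y_{r-2}}\rangle$ by taking $z=x$.
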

\begin{proof}
  The tangent star is always contained in $\sigma_2 (v_d(X))$, thus one
implication is easy as $\s_r(v_d(X))$ is the join of $\s_2(v_d(X))$ and $\s_{r-2}(v_d(X))$.

  To prove the other implication, suppose $p \in \sigma_r (v_d(X))$.
  If $p \in \langle x, \fromto{y_1}{y_{r-2}} \rangle$, then $z$ can be
taken to be $x$.
  Otherwise, let $R \subset \BP V$ be a smoothable scheme of degree at
most $r$,
  such that $p \in \langle v_d(R) \rangle$ and $R$ is minimal with this
property.
  By the uniqueness provided by Corollary~\ref{cor_R_subset_Q},
    $R = R_z \cup  \setfromto{y_1}{y_{r-2}}$,
    where $R_z$ is the degree $2$ scheme supported at $x$,
    and contained in the line $\langle x, z \rangle$ for some $z \in \BP
\hat T_{x}  (v_d (X))$.
  Since $p \in \sigma_r (v_d(X))$,   $R$ is smoothable in $X$,
    and also $R_z$ is smoothable in $X$.
  So $z$ is in the tangent star of $v_d(X)$ at $x$.
\end{proof}

\begin{proof}[Proof of Theorem \ref{eis_does_not_hold_for_singular_case_big_r}]
  If $T^{\star}_xX \ne \BP \hat T_x X$,
    then Lemmas~\ref{lemma_T_is_contained_in_varsigma_case_big_r} and
\ref{lemma_sigma_r_vdX_has_only_tangent_star_case_big_r}
    imply (i).

 Suppose there do not exist equations of degree at most $q$
  that define componently $T^{\star}_xX$, as in Definition~\ref{def_componently_defines}.
  Thus the same holds for $T^{\star}_{v_d (x)} v_d (X) \subset \BP \hat T_{v_d(x)} v_d(X)$
   by Proposition~\ref{prop_tangent_star}\ref{item_star_is_mapped_to_star}.
  And equations of degree at most $q$ are not enough
  to componently define the join of  $T^{\star}_{v_d (x)} v_d (X)$ with a linear space.

  Fix $r-2$ distinct points $\fromto{y_1}{y_{r-2}} \in X \setminus \set{x}$.
  Let $W$ be as in Lemma~\ref{lemma_T_is_contained_in_varsigma_case_big_r}.

  An ideal $I$ defining $\sigma_r (v_d(X))$
  must contain an ideal $I'$ defining $\srdx$.
  Let $J$ be the ideal generated by linear equations of $W$.
  By Lemmas~\ref{lemma_T_is_contained_in_varsigma_case_big_r} and
\ref{lemma_sigma_r_vdX_has_only_tangent_star_case_big_r},
    $I'+J = J$,
  but $I+J$ componently defines a cone over $T^{\star}_{v_d (x)} v_d (X)$
    in $W$ with vertex $\langle \fromto{y_1}{y_{r-2}} \rangle$.
  Thus $I$ needs more equations than there are in $I'$, and our
   assumptions imply
   that equations of degrees at most $q$ are not enough.
\end{proof}

We conclude that the counter-examples illustrated in
\S\ref{section_counterexamplesc}  also work  for $r \ge 3$.

\subsection{Singular complete intersection counter-examples}\label{section_counterexamples_complete_intersection}

The examples in \S\ref{section_counterexamplesc}  are not   local  complete intersections, and one could try to restrict
the EKS conjecture only to such curves.
Yet, even singular complete intersections fail to satisfy  the conjecture.

Suppose $h$ and $h'$ are two general cubics in three variables $x_1, x_2, x_3$ and let
\[
  f:= x_0 (x_1 x_2 - x_2 x_3) + h \text{ and } f':= x_0 (x_1 x_3 - x_2 x_3) + h'.
\]
In $\PP^3$ consider the scheme $X$ defined by $f=f'=0$.
It is a reduced complete intersection of two cubics, as can be easily verified, for instance, by intersecting with
the hyperplane $x_0=0$.
The curve  $X$ is singular at $x:=[1,0,0,0]$.
The tangent cone at $x$ is given by
\[
 x_1 x_2 - x_2 x_3 =  x_1 x_3 - x_2 x_3=0,
\]
so it is the union of four lines through $[1,0,0,0]$ and one of the four points
$[0,1,0,0]$, $[0,0,1,0]$, $[0,0,0,1]$, $[0,1,1,1]$.
The tangent star in this case is the secant variety of the tangent cone, and thus it is a union of $6$ planes.
Hence it is defined by a single equation of degree $6$ and thus by Theorem~\ref{eis_does_not_hold_for_singular_case_big_r}
the secant varieties $\sigma_r(v_d(X))$ cannot be defined by equations of degree $\le 5$ when $d$ is sufficiently large.

Similarly,   consider $X \subset \PP^3$ to be a complete intersection of:
\[
  f:= {x_0}^s g  + h \text{ and } f':= {x_0}^{s'} g' + h',
\]
where $g,g', h,h'$ are general homogeneous polynomials in $x_1, x_2, x_3$ of degrees, respectively, $t,t', (s+t), (s'+t')$,
with $t, t' \ge 2$.
If $t, t'$ grow, then the degree of the defining equation of the tangent star will
  grow  too.
Thus   one has   complete intersection counter-examples  to the EKS conjecture for arbitrary $r \ge 2$.

\section{rpp and brpp}
\label{brppintro}

\subsection{General facts about rpp and brpp}

Let $\BG(k,\BP V)$ denote the Grassmannian of $\pp k$'s in $\BP V$.

\begin{proposition}
  Suppose $X \subset \PP V$ is  a  non-degenerate  subvariety, $L\in \BG(k,\BP V)$ is general,
  and $\tdim (L)\geq \tcodim (X)$.
  If $(X,L)$ is a $rpp$ then it is a $brpp$.
\end{proposition}

\begin{proof}
   The set of points of $X$-rank at most $r$,
     contains an open subset $U_r$ of $\sigma_r(X)$.
   By our assumptions $U_r \cap L$ is not empty.
   Since $(X,L)$ is     a   \rpp, $U_r$ consists of
     points of $(X\cap L)$-rank at most $r$.
   Moreover, $\sigma_r(X) \cap L$ is the closure of $U_r \cap L$,
      because $\sigma_r(X) \cap L$ is irreducible.
   Therefore  $\sigma_r(X) \cap L \subset \sigma_r(X \cap L)$ and since
      the other inclusion always holds,   it follows $(X,L)$ is a $brpp$.
\end{proof}

Recall that  for a variety $X\subset \BP V$,  $\tdim \s_r(X)\leq r(\tdim X+1)-1$ and
   in a typical situation either $\s_r(X) = \BP V$ or the equality $\tdim \s_r(X) = r(\tdim X+1)-1$ holds.
When neither of these happens, we say  $\s_r(X)$ is \emph{defective},
and we w%
rite $\d_r(X)= r(\tdim X+1)-1-\tdim \s_r(X)$, for the $r$-th \emph{secant defect} of $X$.

\begin{proposition} Let   $X\subset \BP V$ with $\tdim X\geq k$
and assume $\d_r(X)\leq k(r-1)-1$.  Let $L\in  \BG(\tdim V-k-1, \BP V)$ be general.
Then $(X,L)$ is neither   a $rpp$ nor a $brpp$.  \end{proposition}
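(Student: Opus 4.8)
The plan is to reduce both assertions to a single dimension count comparing a general linear section of $\s_r(X)$ with $\s_r(Y)$, where $Y:=\reduced{(X\cap L)}$. First I would dispose of a trivial case: if $\langle Y\rangle\ne L$, then by the very definition of rank- and border-rank-preserving pairs, $(X,L)$ is neither $rpp$ nor $brpp$ (both require $\langle Y\rangle=L$), and there is nothing to prove. So I may assume $\langle Y\rangle=L$, which guarantees that $R_Y$ and $\ur_Y$ are genuinely defined on all of $L=\langle Y\rangle\subseteq\langle X\rangle$ and can be compared with $R_X|_L$ and $\ur_X|_L$. Throughout I take $X$ irreducible, as in the surrounding results, so that $\s_r(X)$ is irreducible.

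The heart of the argument is the strict inequality $\tdim\s_r(Y)<\tdim(\s_r(X)\cap L)$. Since $L$ has codimension $k$ in $\BP V$ and $\tdim\s_r(X)\ge\tdim X\ge k$, for general $L$ the section is proper, so $\tdim(\s_r(X)\cap L)=\tdim\s_r(X)-k=r(\tdim X+1)-1-\d_r(X)-k$ by the definition of the secant defect. On the other hand $\tdim Y=\tdim X-k$ for general $L$, whence $\tdim\s_r(Y)\le r(\tdim Y+1)-1=r(\tdim X+1)-rk-1$. Subtracting, the gap is at least $k(r-1)-\d_r(X)\ge 1$, using exactly the hypothesis $\d_r(X)\le k(r-1)-1$. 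Since $Y\subset X$ and $Y\subset L$ with $L$ linear, one always has $\s_r(Y)\subseteq\s_r(X)\cap L$; combined with the strict dimension gap this yields the proper containment $\s_r(Y)\subsetneq\s_r(X)\cap L$.

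From this, failure of $brpp$ is immediate: any $p\in(\s_r(X)\cap L)\setminus\s_r(Y)$ lies in $L$ and satisfies $\ur_X(p)\le r<\ur_Y(p)$, so $\ur_X|_L\ne\ur_Y$ (indeed $\brpp[r]$ already fails). For $rpp$ I would invoke the open dense subset $U_r\subset\s_r(X)$ of points of $X$-rank $\le r$ supplied by the preceding proposition. For general $L$ the intersection $U_r\cap L$ is nonempty of dimension $\tdim\s_r(X)-k$, which by the same computation exceeds $\tdim\s_r(Y)$. Since $\set{p\in L\mid R_Y(p)\le r}\subseteq\s_r(Y)$, not every point of $U_r\cap L$ can satisfy $R_Y\le r$; choosing $p\in U_r\cap L$ with $R_Y(p)>r$ gives $R_X(p)\le r<R_Y(p)$, so $R_X|_L\ne R_Y$ and $rpp$ fails as well.

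The arithmetic aside, the step requiring genuine care is the justification that a general $L\in\BG(\tdim V-k-1,\BP V)$ cuts each of $X$, $\s_r(X)$, and the open set $U_r$ in the expected dimension and, for the latter two, nonemptily. This is a standard general-position (Kleiman transversality) statement for linear sections over $\CC$, since $PGL(V)$ acts transitively on $\BP V$. The only subtlety is that $U_r$ is merely constructible rather than closed: here one uses that $U_r$ is dense in the irreducible variety $\s_r(X)$, so its complement has strictly smaller dimension, and hence a general $L$ still meets $U_r$ in dimension $\tdim\s_r(X)-k$. Once these transversality facts are in place, the conclusion is forced by the defect hypothesis.
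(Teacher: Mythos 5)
Your proof is correct and takes essentially the same approach as the paper's: the paper's (much terser) argument rests on exactly the dimension gap $\tdim \s_r(X\cap L)<\tdim[\s_r(X)\cap L]$ forced by the defect hypothesis, deduces failure of $brpp$ from a point of the difference, and deduces failure of $rpp$ by noting that a general $L$ meets the locus of points of honest $X$-rank at most $r$. Your write-up simply carries out the arithmetic ($k(r-1)-\d_r(X)\geq 1$) and the genericity/constructibility checks that the paper leaves implicit.
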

\begin{proof}
The dimensions have been arranged such that
$\tdim \s_r(X\cap L)<\tdim[\s_r(X)\cap L]$, so there is
$p\in \s_r(X)\cap L$ such that $p\not\in  \s_r(X\cap L)$,
   showing   $(X,L)$ is not a $brpp$.
 Moreover since $L$ is general, it will
have a non-empty intersection with the set of points in $\s_r(X)$ of $X$-rank equal to $r$,
showing   $(X,L)$ is not a $rpp$ either. \end{proof}

\subsection{Examples}
The reader can easily verify the following:

\begin{example}
     Let $X = v_3(\PP^1) \subset \PP (S^3 \CC^2) \simeq \PP^3$,
      and let $L = \PP^2 \subset \PP^3$ be a general plane.
    Then $(X,L)$ is neither a $rpp$ nor a $brpp$.  \end{example}

\begin{example} Let $X\subset \BP V$ be a   (reduced, irreducible)   hypersurface, and let
$L$ be  a linear subspace   such that $\langle \reduced{(X\cap L)} \rangle =L$   (for example $L$ is a general linear subspace of   a  given dimension).  Then $(X,L)$
is both   a $rpp$ and a $brpp$.
\end{example}

\begin{example}
  Let $X = v_2(\pp 2) \subset \pp 5$ and let $L \subset \pp 5$ be a  general hyperplane.
  Then $(X,L)$ is  a $brpp$ but not a $rpp$.

  To see this, note that a general hyperplane section of $v_2 (\pp 2)$ is a
    $v_2(v_2(\pp 1))=v_4(\pp 1)$.  In coordinates, $v_4 (\pp 1)$
  may be described as set of symmetric $3 \times 3$ matrices
  $(x^i_j)$  of rank 1 with $x^1_3=x^2_2$.  The hypersurfaces $\s_2(X)\subset\pp 5$ and $\s_2(X \cap L)\subset L$
  are both given by the vanishing of the determinant,
and  the 3rd secant variety is the ambient space, hence $(X,L)$ is   a $brpp$.
On the other hand $x^3y\in S^4\BC^2$ has rank $4$ (see for instance \cite[Thm~23]{BGI}),
   but the maximal rank of any point in $S^2\BC^3$ is three
   (because $S^2\BC^3$ is a space of quadrics in three variables, and quadrics are diagonalizable) .
\end{example}

\begin{proposition} Strassen's conjecture~\ref{conj_strassen_rpp} and its border rank
version hold for $X:= Seg(\pp 1\times \BP B\times \BP C)$.

That is, for $L:=\CC \otimes B' \otimes C' \oplus \CC \otimes B'' \otimes C'' \subset \CC^2 \otimes B \otimes C$,
the pair $(X,L)$ is both   a \rpp{} and a \brpp{}.
\end{proposition}

\begin{proof}
  In this case   $X\cap L = \BP^0 \times \BP B'  \times \BP C'
     \sqcup \BP^0 \times \BP B''  \times \BP C''$.
  So any element in $L$ is of the form:
  \[
    p:=a_1 \otimes (b_1 \otimes c_1 + \dotsb + b_k \otimes c_k) +
       a_2 \otimes (b_{k+1} \otimes c_{k+1} + \dotsb + b_{k+l} \otimes c_{k+l}).
  \]
  Here $a_1, a_2$ is the basis of $\CC^2$ determined (up to scale) by splitting $\CC^2 = \CC \oplus \CC$,
    $\fromto{b_1}{b_k}$ are vectors in $B'$, $\fromto{b_{k+1}}{b_{k+l}}$ are vectors in $B''$
    and similarly for $\fromto{c_1}{c_k}$, $C'$, $\fromto{c_{k+1}}{c_{k+l}}$ and $C''$.
  We can assume that the $b_i$'s are linearly independent and the
$c_i$'s as well so that
  $\ur_{X \cap L}( p )=  R_{X \cap L}( p) = k+l$.
  After projection $\BP^1 \to \BP^0$ which maps both $a_1$ and $a_2$ to
a single generator of $\BC^1$,
  this element therefore becomes clearly of rank $k+l$.
  Hence both  $X$-rank and $X$-border  rank of $p$ are at least $k+l$.
\end{proof}

\begin{example}[Cases where \brpp{} version of Comon's conjecture holds]
If $\s_r(v_d(\pp n))$ is defined by flattenings, or more generally
by equations inherited from the tensor product space, such
as the Aronhold invariant (which is a symmetrized version
of Strassen's equations) then   the pair as in Conjecture~\ref{conj_comon_rpp} will be a $brpp_r$.
Set-theoretic defining equations for $\s_r(v_d(\pp n))$
are known for $d \ge 2r-1$ and either $n\le 3$ or $r \le 10$, see \cite[Thm~1.1]{BuBu}.
They are also known classically in the case $\s_r(v_d(\pp 1))$ for all $r,d$.
In all the known cases.
the equations are indeed inherited.

Regarding the rank version, it holds trivially  for general points
(as the \brpp{} version holds) and for points in
$\s_2(v_d(\pp n))$, as  a point not of honest rank two is
of the form $x^{d-1}y$, which gives rise to
$x\otc x\ot y + x\otc x\ot y\ot x+ \cdots + y\ot x\otc x$.
By \cite[Prop.~1.1]{BLsecant}
   one concludes.

If one would like to look for counter-examples, it might be
useful to look for linear spaces $M$ such that
$M\cap Seg(\pp n\ctimes \pp n)$ contains more than
$\tdim M+1$ points but
$L\cap M\cap Seg(\pp n\ctimes \pp n)$ contains the expected
number of points as these give rise to counter-examples to the \brpp{} version of Strassen's conjecture.
\end{example}

\bibliographystyle{amsplain}

\bibliography{BGLeis}

\end{document}